\DeclareMathOperator\dist{dist}
\newcommand{\capGm}{{\mathrm{cap}}(\Gamma)}
\newtheorem{theorem}{Theorem}[section]
\newtheorem{lemma}{Lemma}[section]
\newtheorem{definition}{Definition}[section]
\newtheorem{remark}{Remark}[section]
\newtheorem{problem}{Problem}[section]
\newtheorem{corollary}{Corollary}[section]
\newtheorem*{mythm}{Theorem}
\newtheorem*{mylemma}{Lemma}
\begin{document}
\title[Strong asymptotics for Bergman polynomials]
{Strong asymptotics for Bergman polynomials over domains with corners and applications}

\date{\today}

\author[N. Stylianopoulos]{Nikos Stylianopoulos}
\address{Department of Mathematics and Statistics,
         University of Cyprus, P.O. Box 20537, 1678 Nicosia, Cyprus}
\email{nikos@ucy.ac.cy}
\urladdr{http://ucy.ac.cy/~nikos}

\keywords{Bergman orthogonal polynomials, Faber polynomials, strong asymptotics, polynomial estimates, quasiconformal mapping, conformal mapping}
\subjclass[2000]{30C10, 30C30, 30C50, 30C62, 41A10, 65E05, 30E05}

\begin{abstract}
Let $G$ be a bounded simply-connected domain in the complex plane $\mathbb{C}$,
whose boundary $\Gamma:=\partial G$ is a Jordan curve, and let $\{p_n\}_{n=0}^{\infty}$ denote the sequence of Bergman polynomials of $G$. This is defined as the unique sequence
$$
p_n(z) = \lambda_n z^n+\cdots, \quad \lambda_n>0,\quad n=0,1,2,\ldots,
$$
of polynomials that are orthonormal with respect to the inner product
$$
\langle f,g\rangle := \int_G f(z) \overline{g(z)} dA(z),
$$
where $dA$ stands for the area measure.

We establish the strong asymptotics for $p_n$ and $\lambda_n$, $n\in\mathbb{N}$, under the
assumption that $\Gamma$ is piecewise analytic. This complements an investigation started in 1923 by T.\ Carleman, who
derived the strong asymptotics for $\Gamma$ analytic, and carried over by P.K.\ Suetin in the 1960's, who
established them for smooth $\Gamma$.
In order to do so, we use a new approach based on tools from quasiconformal mapping theory.
The impact of the resulting theory is demonstrated in a number of applications, varying from  coefficient estimates in
the well-known class $\Sigma$ of univalent functions and a connection with operator theory, to the computation of
capacities and a reconstruction algorithm from moments.

\end{abstract}

\maketitle
\allowdisplaybreaks
\section{Introduction and main results}\label{section:intro}
Let $G$ be a bounded simply-connected domain in the complex plane $\mathbb{C}$,
whose boundary $\Gamma:=\partial G$ is a Jordan curve and let
$\{p_n\}_{n=0}^{\infty}$ denote the sequence of  Bergman polynomials of
$G$. This is defined as the unique sequence of polynomials
\begin{equation}\label{eq:pndef}
p_n(z) = \lambda_n z^n+ \cdots, \quad \lambda_n>0,\quad n=0,1,2,\ldots,
\end{equation}
that are orthonormal with respect to the inner product
$$
\langle f,g\rangle_G := \int_G f(z) \overline{g(z)} dA(z),
$$
where $dA$ stands for the area  measure. We denote by $L_a^2(G)$ the Hilbert space of functions $f$
analytic in $G$, for which
$$
\|f\|_{L^2(G)}:=\langle f,f\rangle_G^{1/2}<\infty,
$$
and recall
that the sequence of polynomials $\{p_n\}_{n=0}^\infty$ forms a complete orthonormal system for $L_a^2(G)$.

Let $\Omega:=\overline{\mathbb{C}}\setminus\overline{G}$ denote the complement of $\overline{G}$ and let $\Phi$ denote the conformal map  $\Omega\to\Delta:=\{w:|w|>1\}$, normalized so that near infinity
\begin{equation}\label{eq:Phi}
\Phi(z)=\gamma z+\gamma_0+\frac{\gamma_1}{z}+\frac{\gamma_2}{z^2}+\cdots,\quad \gamma>0.
\end{equation}
Finally, let $\Psi:=\Phi^{-1}:\Delta\to\Omega$ denote the inverse conformal map. Then,
\begin{equation}\label{eq:Psi}
\Psi(w)=bw+b_0+\frac{b_1}{w}+\frac{b_2}{w^2}+\cdots, \quad |w|> 1,
\end{equation}
where $b=1/\gamma$ gives the (\textit{logarithmic}) \textit{capacity} $\textup{cap}(\Gamma)$ of $\Gamma$.

As in the bounded case, we define the inner product
$$
\langle f,g\rangle_\Omega := \int_\Omega f(z) \overline{g(z)} dA(z),
$$
on the unbounded domain $\Omega$ and denote by $L_a^2(\Omega)$  the Hilbert space of functions $f$
analytic in  $\Omega$, for which
$$
\|f\|_{L^2(\Omega)}:=\langle f,f\rangle_\Omega^{1/2}<\infty.
$$
We note that $L_a^2(G)$ and $L_a^2(\Omega)$ are known as the \textit{Bergman spaces} of $G$
and $\Omega$, respectively. It is easy to see that for $f\in L_a^2(\Omega)$ to hold it is necessary $f(z)$ has around infinity a Laurent series expansion starting with $1/z^2$.

The main purpose of the paper is to establish the strong asymptotics of the leading coefficients
$\{\lambda_n\}_{n\in\mathbb{N}}$ and the Bergman polynomials $\{p_n\}_{n\in\mathbb{N}}$, in $\Omega$, for non-smooth
boundary $\Gamma$. We do this under the assumption that $\Gamma$ is \textit{piecewise analytic without cusps}. This means
that $\Gamma$ consists of a finite set of analytic arcs that meet at exterior angles $\omega\pi$, with $0<\omega<2$.
Thus, we allow $\Gamma$ to have corners. In this sense, our results complement an investigation started
by T.~Carleman \cite{Ca23} in 1923, who derived the strong asymptotics under the assumption that $\Gamma$ is analytic,
and was carried over by P.K.~Suetin \cite{Su74} in the 1960's, who verified them for smooth $\Gamma$.

The techniques employed in both \cite{Ca23} and \cite{Su74} are tied to the specific properties that characterize the
mapping functions $\Phi$ and $\Psi$ in cases when $\Gamma$ is analytic, or smooth, and it turns out that they are not
suitable for treating domains with corner. In order to overcome this we have developed an approach, which we believe
to be novel. This approach involves, in particular, new techniques from the theory of quasiconformal mapping and a new
sharp estimate, concerning the growth of a polynomial in terms of its $L^2$-norm.

Our main results are the following three theorems.
\begin{theorem}\label{thm:finelambdan}
Assume that $\Gamma$ is piecewise analytic without cusps. Then, for any $n\in\mathbb{N}$, it holds that
\begin{equation}\label{eqinthm:finelambdan}
{\frac{n+1}{\pi}\frac{\gamma^{2(n+1)}}{\lambda_n^2}=1-\alpha_n,}
\end{equation}
where
\begin{equation}\label{eqinthm:finelambdanii}
0\le\alpha_n\le c_1(\Gamma)\,\frac{1}{n}.
\end{equation}
\end{theorem}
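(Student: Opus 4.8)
I would convert the extremal characterisation $1/\lambda_n^2=\min\{\,\|q\|_{L^2(G)}^2 : q(z)=z^n+\cdots\,\}$ into a computation with the Grunsky operator of $\Psi$, extract an exact formula for $\lambda_n$, and then estimate it. The inequality $\alpha_n\ge 0$ is the classical one of Carleman and will fall out for free; the real content is the upper bound $\alpha_n\le c_1(\Gamma)/n$, i.e.\ a lower bound for $\|q\|^2_{L^2(G)}$ valid for all monic $q$ of degree $n$.

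\textbf{An exact identity.} For monic $q(z)=z^n+\cdots$ let $Q$ be a primitive, so $Q(z)=\tfrac{1}{n+1}z^{n+1}+\cdots$, and set $\mathcal{Q}(w):=Q(\Psi(w))$, analytic on $\Delta$ apart from a pole of order $n+1$ at $\infty$ and continuous on $|w|=1$; write $\mathcal{Q}=\mathcal{Q}_++\mathcal{Q}_-$ for the splitting into non-negative and negative powers of $w$. Applying Green's theorem on the analytic level curves $\{|\Phi|=R\}$, substituting $z=\Psi(w)$, reading off the residue and letting $R\downarrow 1$ (monotone convergence in the Laurent coefficients $c_k$ of $\mathcal{Q}$, which also shows both terms below are finite) gives
\[
\|q\|_{L^2(G)}^2=\pi\sum_{k\le n+1}k\,|c_k|^2=\|\mathcal{Q}_+'\|_{L^2(\mathbb{D})}^2-\|\mathcal{Q}_-'\|_{L^2(\Delta)}^2 ,
\]
with $c_{n+1}=\mathrm{cap}(\Gamma)^{\,n+1}/(n+1)$ fixed. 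Expanding $Q=\sum_{m=0}^{n+1}a_mF_m$ in the Faber polynomials $F_m$ of $G$ and using the Faber--Grunsky relation $F_m(\Psi(w))=w^m+m\sum_{k\ge 1}b_{mk}w^{-k}$ (the $b_{mk}$ being the Grunsky coefficients of $\Psi$), this becomes, with $x_m:=\sqrt m\,a_m$ and the Grunsky operator $\mathcal{B}=(\sqrt{jk}\,b_{jk})$ on $\ell^2$,
\[
\|q\|_{L^2(G)}^2=\pi\big(\|x\|^2-\|\mathcal{B}x\|^2\big)=\pi\,\langle (I-\mathcal{B}^*\mathcal{B})x,x\rangle ,
\]
where $x=(x_1,\dots,x_{n+1},0,0,\dots)$ is free except that $x_{n+1}=\mathrm{cap}(\Gamma)^{\,n+1}/\sqrt{n+1}$ is fixed.

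\textbf{Minimisation and the quasicircle input.} The problem is now to minimise $\langle M_n\xi,\xi\rangle$ over $\xi\in\mathbb{C}^{n+1}$ with $\xi_{n+1}$ fixed, where $M_n=I_{n+1}-\big[(\mathcal{B}^*\mathcal{B})_{ij}\big]_{i,j=1}^{n+1}$. Since $\Gamma$ is piecewise analytic without cusps it is a quasicircle (bounded turning), so by Kühnau's theorem $\|\mathcal{B}\|=\kappa<1$; hence $(1-\kappa^2)I_{n+1}\preceq M_n\preceq I_{n+1}$ and $M_n>0$. The constrained minimum of a positive-definite quadratic form is classical and yields
\[
\frac{1}{\lambda_n^2}=\frac{\pi\,\mathrm{cap}(\Gamma)^{2(n+1)}}{n+1}\cdot\frac{1}{(M_n^{-1})_{n+1,n+1}},\qquad\text{equivalently}\qquad 1-\alpha_n=\frac{1}{(M_n^{-1})_{n+1,n+1}} .
\]
From $M_n\preceq I$ we get $(M_n^{-1})_{n+1,n+1}\ge 1$, hence $\alpha_n\ge 0$. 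For the upper bound, write $M_n$ in block form about its last row/column; then $1-\alpha_n$ is its Schur complement $d_n-v_n^*A_n^{-1}v_n$, and using $A_n\succeq(1-\kappa^2)I_n$ on the leading block together with $\|v_n\|^2\le\|\mathcal{B}^*\mathcal{B}e_{n+1}\|^2\le\kappa^2\|\mathcal{B}e_{n+1}\|^2$ and $d_n=1-\|\mathcal{B}e_{n+1}\|^2$ one obtains
\[
\alpha_n\le\frac{\|\mathcal{B}e_{n+1}\|^2}{1-\kappa^2}.
\]

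\textbf{The Faber-remainder estimate and the main obstacle.} Writing $E_m:=\Phi^m-F_m$ (analytic in $\Omega$, vanishing at $\infty$), the Faber--Grunsky relation is precisely $-E_m\circ\Psi=\sum_{k\ge 1}mb_{mk}w^{-k}$, so a change of variables $z=\Psi(w)$ gives the exact identity $\|\mathcal{B}e_m\|^2=m\sum_{k\ge 1}k|b_{mk}|^2=\tfrac{1}{\pi m}\|E_m'\|_{L^2(\Omega)}^2$. Hence $\alpha_n\le\|E_{n+1}'\|_{L^2(\Omega)}^2\big/\big(\pi(n+1)(1-\kappa^2)\big)$, and the theorem follows, with $c_1(\Gamma)=C(\Gamma)/(\pi(1-\kappa^2))$, once one establishes the uniform bound
\[
\sup_{m\ge 1}\ \|E_m'\|_{L^2(\Omega)}^2=\sup_{m\ge 1}\int_\Omega\big|(\Phi^m-F_m)'(z)\big|^2\,dA(z)=:C(\Gamma)<\infty .
\]
This last estimate is where the whole weight of the theorem sits and is the main obstacle. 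Away from the corners $\Gamma$ is analytic and the contribution is (exponentially) harmless, so it is a purely local question at each corner, where $\Psi(w)-\Psi(w_0)\sim c_0(w-w_0)^{\omega}$ with $0<\omega<2$: one must show that the non-smooth part of $E_m\circ\Psi$ near $w_0$ has Dirichlet energy bounded uniformly in $m$, which I would do by analysing $F_m\circ\Psi$ separately on and off the collar $|w-w_0|\lesssim m^{-1/\omega}$ on which the branch singularity $(w-w_0)^{\omega}$ is felt. The hypothesis of no cusps, $0<\omega<2$, is essential here: for a cusp (for instance a slit, for which $\mathcal{B}$ is the identity) one has instead $\|E_m'\|_{L^2(\Omega)}^2\asymp m\to\infty$ and the rate $O(1/n)$ genuinely fails. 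I expect this corner analysis, together with the contraction property $\|\mathcal{B}\|<1$ — the essential quasiconformal input — to constitute the technical core of the argument.
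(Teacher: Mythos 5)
Your reduction is correct, and it is a genuinely different route from the paper's. You pass to the extremal problem for $1/\lambda_n^2$, encode it through the Grunsky operator $\mathcal{B}$ of $\Psi$, and use K\"uhnau's bound $\|\mathcal{B}\|=\kappa<1$ for quasicircles together with a Schur-complement computation to get $0\le\alpha_n\le\|\mathcal{B}e_{n+1}\|^2/(1-\kappa^2)$. The paper instead works with the identity of Lemma~\ref{lem:alphabetaeps}, $\frac{n+1}{\pi}\gamma^{2(n+1)}/\lambda_n^2=1-(\beta_n+\varepsilon_n)$, and controls $\beta_n$ by $\varepsilon_n$ via the quasiconformal reflection (Lemma~\ref{lem:useful-1}, Theorem~\ref{thm:betan}). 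The two reductions are in fact the same inequality in different clothing: since $H_n=E_{n+1}'/(n+1)$ by (\ref{eq:GnFn+1}), your quantity $\|\mathcal{B}e_{n+1}\|^2=\|E_{n+1}'\|^2_{L^2(\Omega)}/(\pi(n+1))$ is exactly the paper's $\varepsilon_n$ of (\ref{eq:epsndef}), and your bound $\alpha_n\le\varepsilon_n/(1-\kappa^2)$ parallels the paper's $\alpha_n=\beta_n+\varepsilon_n\le\varepsilon_n/(1-k^2)$, with the Grunsky norm playing the role of the reflection factor. Up to this point your argument is sound (the level-curve passage in the area identity and the constrained minimization are standard), and it buys a tidy linear-algebra formulation; what it does not buy is the rate.

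The genuine gap is the last step, which you yourself flag as carrying ``the whole weight of the theorem'': the uniform bound $\sup_m\|E_m'\|^2_{L^2(\Omega)}<\infty$, equivalently $\|H_n\|_{L^2(\Omega)}\le c(\Gamma)/n$, is asserted with only a one-sentence plan (``analyse $F_m\circ\Psi$ on and off a collar of width $m^{-1/\omega}$''), not proved. This estimate is precisely Theorem~\ref{thm:epsn} of the paper, and its proof is the technical core of the whole result: one extends $\Phi$ by Schwarz reflection across the analytic arcs to a curve $\Gamma'$ sharing the corners, invokes Lehman's expansions (Remark~\ref{rem:Lehman}), splits $\Omega$ into corner regions of radius $\delta_j\asymp n^{-\omega_j}$ and their complement, and estimates the two contributions separately, the corner part resting on the integral Lemma~\ref{lem:final}, which must handle every opening $\omega\in(0,2)$, and on the delicate balancing of the two contributions noted in Remark~\ref{rem:deltaj}. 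Without this input your argument yields only $\alpha_n\le\kappa^2/(1-\kappa^2)$ (a constant), not the $O(1/n)$ rate of (\ref{eqinthm:finelambdanii}); indeed the only place where the hypothesis ``piecewise analytic without cusps'' enters beyond mere quasiconformality is exactly this unproved estimate. So the proposal establishes a correct and interesting reduction, but not the theorem.
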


\begin{theorem}\label{thm:finepn}
Under the assumptions of Theorem \ref{thm:finelambdan}, for any $n\in\mathbb{N}$, it holds that
\begin{equation}\label{eqinthm:finepn}
{p_n(z)=\sqrt{\frac{n+1}{\pi}}\,\Phi^n(z)\Phi^\prime(z)
\left\{1+A_n(z)\right\}},\quad z\in\Omega,
\end{equation}
where
\begin{eqnarray} \label{eqinthm:finepnii1}
|A_n(z)|\le \frac{c_2(\Gamma)}{\dist(z,\Gamma)\,|\Phi^\prime(z)|}\,\frac{1}{\sqrt{n}}
+c_3(\Gamma)\,\frac{1}{n} .
\end{eqnarray}
\end{theorem}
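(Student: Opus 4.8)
The plan is to transplant the problem to the exterior unit disc $\Delta$ by the map $\Psi$, where the candidate approximant collapses to a single monomial, and then to combine Theorem~\ref{thm:finelambdan} with a polynomial $L^2$-growth estimate. Put $q_n(z):=\sqrt{(n+1)/\pi}\,\Phi^n(z)\Phi'(z)$, so that $A_n=p_n/q_n-1$ and the assertion is that $A_n$ obeys \eqref{eqinthm:finepnii1}. Because $\Phi'(\Psi(w))\Psi'(w)\equiv 1$, the substitution $w=\Phi(z)$ sends $q_n$ to the monomial $q_n(\Psi(w))\Psi'(w)=\sqrt{(n+1)/\pi}\,w^n$ and sends $p_n$ to $F_n(w):=p_n(\Psi(w))\Psi'(w)$, analytic in $\Delta$ with Laurent expansion $F_n(w)=c_n w^n+\cdots+c_0+O(|w|^{-2})$ as $w\to\infty$, where $c_n=\lambda_n b^{n+1}=\lambda_n/\gamma^{n+1}$; the coefficient of $w^{-1}$ vanishes, since it is a constant multiple of $\oint_{L_r}p_n(z)\,dz=0$ for $r>1$, where $L_r:=\{|\Phi|=r\}$ and we used that $p_n$ is entire. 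By \eqref{eqinthm:finelambdan}, $c_n=\sqrt{(n+1)/\pi}\,(1-\alpha_n)^{-1/2}$, which by \eqref{eqinthm:finelambdanii} differs from $\sqrt{(n+1)/\pi}$ by $O(\sqrt n\,\alpha_n)=O(n^{-1/2})$. A short computation gives $A_n(\Psi(w))=\sqrt{\pi/(n+1)}\,g_n(w)$ with $g_n(w):=F_n(w)/w^n-\sqrt{(n+1)/\pi}$, analytic in $\Delta$ and $O(n^{-1/2})$ at $\infty$; so it suffices to estimate $g_n$.

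The key step is the collar estimate
\[
\bigl\|p_n-q_n\bigr\|_{L^2(G_R\setminus\overline G)}^2
=\int_{1<|w|<R}\Bigl|F_n(w)-\sqrt{\tfrac{n+1}{\pi}}\,w^n\Bigr|^2 dA(w)
\ \le\ \frac{c(\Gamma,R)}{n}\,R^{2n+2},\qquad R>1,
\]
where $G_R$ is the bounded component of $\mathbb{C}\setminus L_R$. Expanding the integrand in the orthogonal system $\{w^k\}$ over the annulus, the left-hand side equals $(R^{2n+2}-1)\bigl((1-\alpha_n)^{-1/2}-1\bigr)^2+\sum_{k\le n-1,\,k\neq-1}|c_k|^2\,\|w^k\|_{L^2(1<|w|<R)}^2$, the first term (the $w^n$-contribution) being $O(R^{2n+2}n^{-2})$ by Theorem~\ref{thm:finelambdan}; so everything reduces to controlling the \emph{lower-order} Laurent coefficients of $F_n$ — equivalently, through $\Psi$, the low-order Taylor coefficients of $p_n$ — in terms of $\|p_n\|_{L^2(G)}=1$. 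It is at this point that one needs the sharp polynomial $L^2$-growth estimate announced in the Introduction, together with the fact that a piecewise analytic Jordan curve without cusps is a quasicircle, so that $\Phi$ extends quasiconformally to a neighbourhood of $\Gamma$: the extension is what lets one compare the mass of a degree-$n$ polynomial over the collar $G_R\setminus\overline G$ with its mass over $G$ with constants that do not deteriorate as $n\to\infty$, and it absorbs the integrable branch-point singularities that $\Phi'$ acquires at the finitely many corners. I expect this to be the main obstacle — keeping the relative error genuinely of order $1/n$, uniformly up to and across the corners, rather than merely $O(1)$.

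Granting the collar estimate, the pointwise bound is extracted from it by a Cauchy (area mean-value) argument. For $w=\Phi(z)$ with $z$ away from the corners, apply the mean-value inequality to $g_n$ over a disc centred at $w$ of radius a small fixed multiple of $(|w|-1)/n$; the factor $1/n$ is chosen so that the weight $|w'|^{2n}$ oscillates by only a bounded factor over the disc and may be replaced there by $|w|^{2n}$, after which a localized form of the collar estimate yields $|A_n(z)|=\sqrt{\pi/(n+1)}\,|g_n(w)|\le c\,(|\Phi(z)|-1)^{-1}n^{-1/2}$. Invoking the Koebe distortion inequalities $\dist(z,\Gamma)\asymp(|w|-1)\,|\Psi'(w)|=(|\Phi(z)|-1)/|\Phi'(z)|$, which hold with absolute constants (corners included), turns this into the first summand of \eqref{eqinthm:finepnii1}. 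The region where $|\Phi(z)|$ is large is treated directly, since there $p_n$ and $q_n$ are each dominated by their leading terms; a thin collar around $\Gamma$ together with the finitely many corners is handled by the quasiconformal reflection already in use; and the $O(1/n)$ discrepancy between the two leading coefficients, which by the maximum principle persists over all of $\Omega$, produces the $c_3(\Gamma)/n$ summand. Enlarging $c_2(\Gamma)$ and $c_3(\Gamma)$ to absorb the finitely many small values of $n$ completes the argument.
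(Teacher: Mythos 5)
Your reduction is set up correctly: transplanting by $\Psi$, identifying the leading Laurent coefficient of $F_n(w)=p_n(\Psi(w))\Psi^\prime(w)$ with $\lambda_n/\gamma^{n+1}$ and controlling it by Theorem~\ref{thm:finelambdan}, and noting that the $w^{-1}$ coefficient vanishes are all sound. But the proof has a genuine gap exactly where you yourself locate ``the main obstacle'': the collar estimate is asserted, not proved, and it is not a technical footnote --- it \emph{is} the theorem. After peeling off the $w^n$ term, what remains is to show that the lower-order Laurent coefficients $c_k$, $k\le n-1$, of $F_n$ carry total weighted mass $O(R^{2n+2}/n)$ over the annulus, and nothing in your sketch produces that. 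Appealing generically to ``the sharp polynomial $L^2$-growth estimate'' and to a quasiconformal extension of $\Phi$ does not do it: the growth lemma bounds a polynomial in $\Omega$ by its $L^2(G)$-norm, but here the object $p_n-\sqrt{(n+1)/\pi}\,\Phi^n\Phi^\prime$ is not a polynomial, and its smallness in $L^2(G)$ is itself what must be proved. The paper's route is to write $p_n=\frac{\lambda_n}{\gamma^{n+1}}\bigl(\Phi^n\Phi^\prime+H_n-q_{n-1}\bigr)$ with $H_n$ the singular part of $\Phi^n\Phi^\prime$ and $q_{n-1}=G_n-\frac{\gamma^{n+1}}{\lambda_n}p_n$, and then to establish (i) $|H_n(z)|\le c/(n\,\dist(z,\Gamma))$ and $\|H_n\|_{L^2(\Omega)}\le c/n$ by contour deformation and Lehman's expansions at the corners (Theorems~\ref{thm:HnOmgae}--\ref{thm:epsn}), (ii) $\|q_{n-1}\|_{L^2(G)}\le c/n$ via the quasiconformal-reflection inequality $\beta_n\le\frac{k^2}{1-k^2}\varepsilon_n$ (Theorem~\ref{thm:betan}), and (iii) the pointwise bound on $q_{n-1}$ in $\Omega$ from Lemma~\ref{lem:PolyLemma}. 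All of the corner analysis lives inside step (i), and none of it is replaced by anything in your proposal.

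There is a second, quantitative problem in the extraction step. With a disc of radius $r\asymp(|w|-1)/n$, the mean-value inequality gives $|g_n(w)|\le C r^{-1}\|g_n\|_{L^2(D(w,r))}$, and feeding in the \emph{global} collar estimate yields only $\|g_n\|_{L^2(D(w,r))}\le C|w|/\sqrt{n}$, hence $|A_n(z)|=O(1)$ --- not $O\bigl(n^{-1/2}/(|\Phi(z)|-1)\bigr)$. So you genuinely need a localized $L^2$ (or pointwise) bound on $F_n(w)-\sqrt{(n+1)/\pi}\,w^n$ near each $w$, which is again neither stated precisely nor proved; in the paper this role is played by the pointwise estimates (\ref{eq:HnOmega}) and (\ref{eq:qn-in-Omega}), which are inserted directly into the explicit formula (\ref{eq:A_-esti}) for $A_n$ with no mean-value step at all. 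As it stands, your argument establishes the easy $O(1/n)$ contribution from the leading coefficient but leaves the entire $n^{-1/2}/(\dist(z,\Gamma)|\Phi^\prime(z)|)$ term unjustified.
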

Above and in the sequel we use $c(\Gamma)$, $c_1(\Gamma)$, $c_2(\Gamma)$, e.t.c., to denote non-negative  constants that depend only on $\Gamma$. We also use $\dist(z,B)$ to denote the (Euclidian) distance of $z$ from a set $B$ and call the quantities $\alpha_n$ and $A_n(z)$, defined by (\ref{eqinthm:finelambdan}) and (\ref{eqinthm:finepn}), as the \textit{strong asymptotic errors} associated with $\lambda_n$ and $p_n(z)$, respectively.

From (\ref{eqinthm:finepnii1}) and the well-known distortion  property of conformal mappings
\begin{equation}\label{eq:distortion}
\dist(\Phi(z),\partial\mathbb{D})\le 4\,\dist(z,\Gamma)\,|\Phi^\prime(z)|,\quad z\in\Omega;
\end{equation}
see, e.g., \cite[p.~23]{ABbook}, we arrive at another estimate for $A_n(z)$, which does not involve the derivative of $\Phi$, i.e.,
\begin{eqnarray} \label{eqinthm:finepnii2}
|A_n(z)|\le\frac{c_4(\Gamma)}{|\Phi(z)|-1}\,\frac{1}{\sqrt{n}}
+c_3(\Gamma)\,\frac{1}{n}, \quad z\in\Omega.
\end{eqnarray}

Our next result provides an interesting link between the  Bergman polynomials and the problem of coefficient estimates in Univalent Functions Theory. This result is established under the assumption that $\Gamma$
belongs to a broader class of Jordan curves than the one appearing in Theorem~\ref{thm:finelambdan}, namely the class of
quasiconformal curves. We recall that a Jordan curve $\Gamma$ is \textit{quasiconformal} if there exists a constant $M$ such
that,
$$
\textup{diam}\Gamma(z,\zeta)\le M |z-\zeta|,\mbox{ for all }\, z,\zeta\in\Gamma,
$$
where
$\Gamma(z,\zeta)$ is the arc (of smaller diameter) of $\Gamma$ between $z$ and $\zeta$.
In connection with the assumptions of Theorem~\ref{thm:finelambdan}, we also recall that a piecewise analytic Jordan
curve is quasiconformal if and only if has no cusps. 
The assumption that $\Gamma$ is quasiconformal ensures the existence of an associated $K$-quasiconformal reflection
$y(z)$, for some $K\ge 1$, which is characterized by the properties (A1)--(A3) stated in
Remark~\ref{lem:prop-qc} below. The existence of the quasiconformal reflection was established by Ahlfors in \cite{Ah63}.
All our estimates that are derived under the assumption that $\Gamma$ is quasiconformal are
given in terms of the constant
\begin{equation}\label{eq:refcoef}
k:=(K-1)/(K+1),
\end{equation}
which in the sequel we refer to  as the \textit{reflection factor of $\Gamma$ (associated with $y$)}.
We note that $0\le k<1$, with $k=0$ if $\Gamma$ is a circle.

In the next theorem we require, in addition, that  $\Gamma$ is rectifiable. Note that there are examples of
non-rectifiable quasiconformal curves; see, e.g., \cite[p.~104]{LV}.
However, any quasiconformal curve has zero area.

Our result shows that the strong asymptotic error $\alpha_n$ cannot decay faster than $(n+1)|b_{n+1}|^2$, where
$b_{n+1}$ is the coefficient of $1/w^{n+1}$ in the Laurent series expansion (\ref{eq:Psi}) of $\Psi(w)$.
\begin{theorem}\label{thm:alphange}
Assume that $\Gamma$ is quasiconformal and rectifiable. Then, for any $n\in\mathbb{N}$, it holds that
\begin{equation}\label{eq:alphange}
\alpha_{n}\ge\,\frac{\pi\,(1-k^2)}{A(G)}\,(n+1)\,|b_{n+1}|^2,
\end{equation}
where $A(G)$ denotes the area of $G$ and $k$ is the reflection factor of $\Gamma$.
\end{theorem}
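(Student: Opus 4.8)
The plan is to bound $\lambda_n$ from below by testing the extremal characterisation of $1/\lambda_n^2$ against a Faber polynomial, and then to feed the quasiconformal reflection in through a sharpened area estimate. By Theorem~\ref{thm:finelambdan} one has $\alpha_n=1-\frac{n+1}{\pi}\,\gamma^{2(n+1)}/\lambda_n^2$, so it is equivalent to produce an \emph{upper} bound for $1/\lambda_n^2$. Here I use the classical extremal property $1/\lambda_n^2=\min\{\|q\|_{L^2(G)}^2:\ q\text{ monic},\ \deg q=n\}$, attained by the monic orthogonal polynomial $p_n/\lambda_n$.

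For the competitor I take the Faber polynomial $F_{n+1}$ of $G$ of degree $n+1$, i.e.\ the polynomial part at $\infty$ of $\Phi^{n+1}$, and write $\Phi^{n+1}=F_{n+1}+E_{n+1}$ on $\Omega$ with $E_{n+1}(\infty)=0$. Since $F_{n+1}$ has leading coefficient $\gamma^{n+1}$, the polynomial $q_n:=\gamma^{-(n+1)}F_{n+1}'/(n+1)$ is monic of degree $n$, whence $1/\lambda_n^2\le\|q_n\|_{L^2(G)}^2$. Next I would evaluate $\|F_{n+1}'\|_{L^2(G)}^2$ by Green's theorem — this is exactly where rectifiability of $\Gamma$ is used — transporting the boundary integral to $\partial\mathbb{D}$ via $z=\Psi(w)$ and using $F_{n+1}(\Psi(w))=w^{n+1}-\epsilon_{n+1}(w)$, where $\epsilon_{n+1}(w):=E_{n+1}(\Psi(w))=\sum_{j\ge1}e_jw^{-j}$ for $|w|>1$. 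A short residue computation then gives
\[
\|F_{n+1}'\|_{L^2(G)}^2=\pi(n+1)-\pi\sum_{j\ge1}j\,|e_j|^2 ,
\]
so that $\frac{n+1}{\pi}\gamma^{2(n+1)}/\lambda_n^2\le 1-\frac{1}{n+1}\sum_{j\ge1}j|e_j|^2$, i.e.
\[
\alpha_n\ \ge\ \frac{1}{n+1}\sum_{j\ge1}j\,|e_j|^2\ \ge\ \frac{|e_1|^2}{n+1}.
\]

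It remains to identify $e_1$ and to pass to the stated constant. Expanding $\epsilon_{n+1}(w)=E_{n+1}(\Psi(w))$ at $\infty$ gives $e_1=\gamma\,c$, where $c$ is the coefficient of $z^{-1}$ in $\Phi^{n+1}$; and the residue formula $c=\frac{1}{2\pi i}\oint w^{n+1}\Psi'(w)\,dw=-(n+1)b_{n+1}$ shows $|e_1|^2=(n+1)^2\gamma^2|b_{n+1}|^2$, hence $\alpha_n\ge(n+1)\,\gamma^2\,|b_{n+1}|^2$. To finish I would invoke (or establish) the refined area inequality $A(G)\ge\pi(1-k^2)\,\mathrm{cap}(\Gamma)^2$, equivalently $\gamma^2\ge\pi(1-k^2)/A(G)$. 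Let $F$ be a $K$-quasiconformal extension of $\Psi$ to $\overline{\mathbb{C}}$ carrying $\mathbb{D}$ onto $G$ — its existence is equivalent to $\Gamma$ being quasiconformal and corresponds to the reflection $y$, and on $\mathbb{D}$ its complex dilatation satisfies $|\mu_F|\le k$. Then
\[
A(G)=\int_{\mathbb{D}}J_F\,dA=\int_{\mathbb{D}}|F_z|^2\bigl(1-|\mu_F|^2\bigr)\,dA\ \ge\ (1-k^2)\int_{\mathbb{D}}|F_z|^2\,dA ,
\]
and among all $W^{1,2}(\mathbb{D})$ maps with boundary values $\Psi(e^{i\theta})=be^{i\theta}+b_0+\sum_{j\ge1}b_je^{-ij\theta}$ the harmonic extension minimises $\int_{\mathbb{D}}|F_z|^2\,dA$ (the cross term against a map vanishing on $\partial\mathbb{D}$ integrates to $0$ by Stokes), its value being $\pi b^2=\pi\,\mathrm{cap}(\Gamma)^2$. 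Combining this with $\alpha_n\ge(n+1)\gamma^2|b_{n+1}|^2$ and $\gamma^2=1/\mathrm{cap}(\Gamma)^2$ yields $\alpha_n\ge\frac{\pi(1-k^2)}{A(G)}(n+1)|b_{n+1}|^2$.

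The computations in the middle two paragraphs are routine residue bookkeeping; rectifiability of $\Gamma$ is needed only to license Green's theorem on the non-smooth boundary, and the fact that quasiconformal curves have zero area guarantees $A(\overline G)=A(G)$ in the last step. The genuine content — and the only place where the quasiconformal reflection enters essentially — is the sharpened area estimate $A(G)\ge\pi(1-k^2)\,\mathrm{cap}(\Gamma)^2$, equivalently the strengthened area-theorem bound $\sum_{j\ge1}j|b_j|^2\le k^2b^2$; this is the step I expect to require the most care, and if it has already been recorded among the quasiconformal tools developed earlier in the paper it can simply be quoted, otherwise the Dirichlet-minimisation argument sketched above supplies it.
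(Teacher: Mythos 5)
Your proof is correct, but it takes a genuinely different route from the paper's. The paper first proves the identity $\alpha_n=\beta_n+\varepsilon_n$ (Lemma~\ref{lem:alphabetaeps}), drops $\beta_n\ge 0$, and then bounds $\varepsilon_n=\frac{n+1}{\pi}\|H_n\|^2_{L^2(\Omega)}$ from below by writing $b_{n+1}=\frac{1}{\pi}\int_\Omega H_n(z)\,y_{\overline z}\,dA(z)$ (via $y(z)=z$ on $\Gamma$ and Green's formula, see \eqref{eq:bn+1Hn}) and applying Cauchy--Schwarz together with $\int_\Omega|y_{\overline z}|^2\,dA\le A(G)/(1-k^2)$ from \eqref{eq:intyzJ}; so the reflection acts directly on $H_n$ in $\Omega$. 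You instead discard $\beta_n$ via the extremal characterisation of $1/\lambda_n^2$ (which is exactly equivalent), compute $\|F_{n+1}'\|^2_{L^2(G)}$ by an area-theorem/Parseval identity --- in the paper's notation this is the statement $\varepsilon_n=\frac{1}{n+1}\sum_{j\ge1}j|e_j|^2$, refining \eqref{eq:Gnepsn} --- and keep only the $j=1$ term, which gives the clean intermediate bound $\alpha_n\ge(n+1)\,\gamma^2\,|b_{n+1}|^2$ valid for merely rectifiable $\Gamma$. Quasiconformality then enters only through the geometric inequality $A(G)\ge\pi(1-k^2)\,\mathrm{cap}(\Gamma)^2$, equivalently the strengthened area theorem $\sum_{j\ge1}j|b_j|^2\le k^2b^2$, which your Dirichlet-minimisation argument does establish correctly (the cross term vanishes because $h_z$ is holomorphic --- indeed constant --- and the perturbation has zero trace). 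Two small points deserve care if you write this up: (i) to get the dilatation bound $|\mu_F|\le k$ with the \emph{same} $k$ as the paper's reflection factor, you should take the explicit extension $F(w)=y\bigl(\Psi(1/\overline{w})\bigr)$, whose dilatation equals $|y_z/y_{\overline z}|$ composed with an antiholomorphic map; (ii) the termwise residue computation on $|w|=1$ needs the remark that $\sum_j j|e_j|^2=\frac{1}{\pi}\|E_{n+1}'\|^2_{L^2(\Omega)}<\infty$ (Lemma~\ref{lem:Hn-in-L2}), so $E_{n+1}\circ\Psi$ lies in the Dirichlet space of $\Delta$ and the Parseval identity is legitimate. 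What each approach buys: the paper's argument is shorter and localises all the work in one Cauchy--Schwarz step, while yours isolates the sharper, purely rectifiable inequality $\alpha_n\ge(n+1)|b_{n+1}|^2/\mathrm{cap}(\Gamma)^2$ (at least as strong as \eqref{eq:alphange}, since $A(G)\le\pi\,\mathrm{cap}(\Gamma)^2$ fails only by the reflection-controlled deficit) and makes transparent that the full sequence $\{j|e_j|^2\}_{j\ge1}$, not just $j=1$, contributes to $\alpha_n$.
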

As it was noted above, Theorem~\ref{thm:alphange} provides a link between the problems of estimating the error in the
strong asymptotics for $\lambda_n$ and of estimating coefficients in the well-known class $\Sigma$, consisting  of
functions analytic and univalent in $\Delta\setminus\{\infty\}$ that have a Laurent series expansion of the form
(\ref{eq:Psi}) with $b=1$. This latter problem is one of the best-studied in Geometric Function Theory; see, e.g.,
\cite{Po75} and \cite{Du83}.

In another application, the result of Theorem~\ref{thm:alphange} can be used to  discuss the sharpness in the decay
of order $O(1/n)$, predicted for the sequence $\{\alpha_n\}$ by Theorem~\ref{thm:finelambdan}. Ideally, in order to
show by means of Theorem~\ref{thm:alphange} that the estimate (\ref{eqinthm:finelambdanii}) is sharp, it would
suffice to find a domain $G$, bounded by a piecewise analytic, and without cups, Jordan curve $\Gamma$, for which
it would hold $|b_n|\ge c_4/n$.
In view of the estimate $|b_n|\le c_5/n$, for $n\in\mathbb{N}$, however, which was obtained by Gaier in \cite{Ga99} for
$\Gamma$ piecewise Dini-smooth, this seems already tricky, because if $\Gamma$ is piecewise analytic (even with cusps)
then is also piecewise Dini-smooth.
Moreover, in view of the estimate $|b_n|\le c_4/n^{1+\omega}$, $n\in\mathbb{N}$, with $0<\omega<2$,
which is established in Section~\ref{subsec:coef-esti} below, the use of Theorem~\ref{thm:alphange} for proving
this kind of sharpness is of no help.

Nevertheless, Theorem~\ref{thm:alphange} can be employed to show that the order $O(1/n)$ in (\ref{eqinthm:finelambdanii})
is best possible in a different sense:
\begin{remark}\label{rem:alphan-low-esti}
For any $\epsilon>0$, there exists a domain $G$, which is bounded by a piecewise analytic Jordan curve $\Gamma$,
such that for the associated strong asymptotic error $\alpha_n$ there holds
\begin{equation}\label{eq:alphan-low-esti}
\alpha_n \ge c_5(\Gamma)\frac{1}{n^{1+\epsilon}},
\end{equation}
for some positive constant $c_5(\Gamma)$ and infinitely many $n\in\mathbb{N}$.
\end{remark}
This will be shown in a forthcoming article with the help of a domain $G$ whose boundary consists of two symmetric,
with respect to the imaginary axis,  circular arcs that meet at $i$ and $-i$, forming exterior angles $\pi/N$, with
$N\in\mathbb{N}\setminus\{1\}$. More precisely, in this case it can be shown that
$$
|b_{2n+1}|\asymp \frac{1}{n^{1+1/N}},\quad n\in\mathbb{N},
$$
which, in view of Theorem~\ref{thm:alphange}, implies (\ref{eq:alphan-low-esti}).

In addition to the above, we present two examples and certain numerical evidence supporting the hypothesis that the order
$O(1/n)$ in (\ref{eqinthm:finelambdanii}) is, indeed, sharp.

The first example is based on a Jordan curve constructed by Clunie in \cite{Cl59}, for which the sequence $\{n\,b_n\}_{n\in\mathbb{N}}$ is unbounded. More precisely, for $\epsilon=1/50$, there exists some subsequence
$\mathcal{N}$ of $\mathbb{N}$, such that
\begin{equation}\label{eq:clunie}
n|b_n|> n^\epsilon,\quad n\in\mathcal{N}.
\end{equation}
It was shown by Gaier in \cite[\S~4.2]{Ga99} that Clunie's curve is, eventually, quasiconformal.

The second example is generated by the function
\begin{equation}\label{eq:Psi-hypom}
\Psi(w)=w+\frac{1}{(m-1)w^{m-1}},\quad |w|\ge 1,
\end{equation}
For any $m\ge 3$, this function maps $\Delta$ conformally onto the exterior of a symmetric $m$-cusped hypocycloid $H_m$,
which is a piecewise analytic Jordan curve with all exterior angles equal to $2\pi$, and thus not a quasiconformal curve.
Nevertheless, for each $n\ge 2$, $H_{n+1}$ provides an example of a cusped Jordan curve, where $b_n=1/n$.

Regarding numerical evidence, we consider the case where $G$
is the unit half-disk and display in Table~\ref{tab:an-decay} a range of computed values of $\alpha_n$, for
$n=51,\ldots,60$.
These were obtained from the exact value $\gamma=1/\textup{cap}(\Gamma)=3\sqrt{3}/4$ and
the computed values of $\lambda_n$, after constructing in finite high precision and in the way
indicated in Section~\ref{subsec:ArnoldiGS} below, the Bergman polynomials up to degree $60$.
Thus, we expect all the figures quoted in the table to be correct.
The reported values of $\alpha_n$ indicate that the strong asymptotic error for the leading
coefficient decays monotonically to zero.
In view of the estimate
(\ref{eqinthm:finelambdanii}), we test the hypothesis ${\alpha_n\approx{1}/{n^s}}$.
The reported values of $s$ in the table indicate clearly that $s=1$.

Exactly the same behaviour was observed in a number of different non-smooth cases, involving various angles and
shapes. Based on such evidence, we have conjectured the strong asymptotics for non-smooth domains
in \cite[pp.~520--521]{NS06}.

\begin{table}[h]
\begin{tabular}{|c|c|c|}
\hline $ $
$n$& $\alpha_n$           & $s$   ${\vphantom{\sum^{\sum^{\sum^N}}}}$ \\*[3pt] \hline
51 & 0.003\,263\,458\,678 &  -       \\
52 & 0.003\,200\,769\,764 & 0.998\,887     \\
53 & 0.003\,140\,444\,435 & 0.998\,899     \\
54 & 0.003\,082\,351\,464 & 0.998\,911     \\
55 & 0.003\,026\,369\,160 & 0.998\,923     \\
56 & 0.002\,972\,384\,524 & 0.998\,934     \\
57 & 0.002\,920\,292\,482 & 0.998\,946    \\
58 & 0.002\,869\,952\,027 & 0.998\,957    \\
59 & 0.002\,821\,401\,485 & 0.998\,968     \\
60 & 0.002\,774\,426\,207 & 0.998\,979   \\ \hline
\end{tabular}

\medskip
\caption{The rate of decay of $\alpha_n$ for the unit half-disk.} \label{tab:an-decay}
\end{table}

The first ever result regarding strong asymptotics for $\{\lambda_n\}_{n\in\mathbb{N}}$ and $\{p_n\}_{n\in\mathbb{N}}$ was derived by Carleman in~\cite{Ca23}, for domains bounded by analytic Jordan curves. In this case the conformal map $\Phi$ has an analytic and one-to-one continuation across $\Gamma$ inside $G$.
\begin{mythm}[Carleman~\cite{Ca23}; see also \cite{Gabook87}, pp. 12--14]\label{thm:carleman}
Let $L_R$ to denote the level curve  $\{z:|\Phi(z)|=R\}$ and assume that $\rho<1 $ is the smallest number for which $\Phi$ is conformal in the exterior of $L_\rho$. Then, for any $n\in\mathbb{N}$,
\begin{equation}\label{eq:carlman1}
0\le \alpha_n\le c_6(\Gamma)\, \rho^{2n}
\end{equation}
and
\begin{equation}\label{eq:carlman2}
|A_n(z)|\le c_7(\Gamma)\sqrt{n}\,\rho^n,\quad z\in\overline{\Omega}.
\end{equation}
\end{mythm}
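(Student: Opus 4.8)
The strategy is to combine the extremal characterisation of $\lambda_n$ with one elementary localisation that is available precisely because $\Gamma$ is analytic: $\Phi$ extends conformally across $\Gamma$ to the exterior of $L_\rho$ and $\Psi=\Phi^{-1}$ extends to $\{|w|>\rho\}$, so that $R_\rho:=\{z\in G:|\Phi(z)|>\rho\}=\Psi(\{\rho<|w|<1\})$ is a genuine subdomain of $G$. Write $\pi_n:=p_n/\lambda_n=z^n+\cdots$ for the monic Bergman polynomial, so that $1/\lambda_n^2=\|\pi_n\|_{L^2(G)}^2$ and $\pi_n$ minimises $\|q\|_{L^2(G)}$ among monic $q$ of degree $n$; note also that, by definition of $\alpha_n$, $\lambda_n=\sqrt{(n+1)/\pi}\,\gamma^{n+1}/\sqrt{1-\alpha_n}$, so that $\gamma^{n+1}\pi_n=\sqrt{1-\alpha_n}\,\Phi^n\Phi'(1+A_n)$ and the whole problem reduces to analysing $\pi_n$ on $\overline{\Omega}$. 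Throughout I would use only the elementary half $\alpha_n\ge 0$ of \eqref{eqinthm:finelambdanii}, i.e.\ $\lambda_n\ge\sqrt{(n+1)/\pi}\,\gamma^{n+1}$, which is classical and holds for any Jordan domain.

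For the upper estimate \eqref{eq:carlman1} I would restrict the norm and change variables: $\|\pi_n\|_{L^2(G)}^2\ge\|\pi_n\|_{L^2(R_\rho)}^2=\int_{\{\rho<|w|<1\}}|G_n(w)|^2\,dA(w)$, where $G_n(w):=\pi_n(\Psi(w))\Psi'(w)$ is analytic on $\{|w|>\rho\}$, with Laurent expansion $G_n(w)=\sum_{k\le n}c_kw^k$. The monomials $w^k$ are mutually orthogonal over any annulus centred at the origin, so $\|\pi_n\|_{L^2(R_\rho)}^2=\sum_{k\le n}|c_k|^2\mu_k$, where $\mu_k:=\int_{\{\rho<|w|<1\}}|w|^{2k}\,dA(w)$; here $\mu_n=\pi(1-\rho^{2n+2})/(n+1)$ and $c_n=\gamma^{-(n+1)}$, the latter simply because $\pi_n$ is monic. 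Discarding every term but $k=n$ gives $1/\lambda_n^2\ge\gamma^{-2(n+1)}\pi(1-\rho^{2n+2})/(n+1)$, that is $1-\alpha_n\ge 1-\rho^{2n+2}$, which is \eqref{eq:carlman1} (with $c_6(\Gamma)=1$, or even $\rho^2$).

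For \eqref{eq:carlman2} I would extract the size of the remaining Laurent coefficients. Feeding $\alpha_n\ge 0$, equivalently $\|\pi_n\|_{L^2(G)}^2\le\gamma^{-2(n+1)}\pi/(n+1)$, back into the identity above yields $\sum_{k\le n-1}|c_k|^2\mu_k\le\gamma^{-2(n+1)}\pi\rho^{2n+2}/(n+1)$. Since $\mu_k\ge\pi(1-\rho^2)/(k+1)$ for $0\le k\le n-1$ while $\mu_k\to\infty$ as $k\to-\infty$, a Cauchy--Schwarz argument converts this into the uniform bound $\sum_{k\le n-1}|c_k|\,|w|^k\le c(\Gamma)\,\sqrt n\,\gamma^{-(n+1)}\rho^{n+1}|w|^{n-1}$ for all $|w|\ge 1$. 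Writing $G_n(w)=\gamma^{-(n+1)}w^n+E(w)$ and using $\Phi(\Psi(w))=w$, $\Phi'(\Psi(w))=1/\Psi'(w)$, one obtains on $\overline{\Omega}$ that $\gamma^{n+1}\pi_n(z)=\Phi(z)^n\Phi'(z)\bigl(1+\gamma^{n+1}E(\Phi(z))/\Phi(z)^n\bigr)$, with the correction of size $O(\sqrt n\,\rho^{n})$; combining this with $\lambda_n=\sqrt{(n+1)/\pi}\,\gamma^{n+1}/\sqrt{1-\alpha_n}$ and $0\le\alpha_n=O(\rho^{2n})$ then produces \eqref{eq:carlman2}.

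The computations themselves are short. Apart from bookkeeping — checking that $R_\rho\subset G$ and that the change of variables and term-by-term integration are licit, both routine since $G_n$ is analytic in a full neighbourhood of $\overline{R_\rho}$ — the one point requiring genuine care is the behaviour of the correction $A_n(z)$ as $z\to\Gamma$: the tail $\sum_{k\le -1}c_kw^k$ must stay bounded as $|w|\to1^+$, and this is exactly where one uses that the negative-index weights $\mu_k$ blow up, which forces $|c_k|\lesssim\gamma^{-(n+1)}\rho^{\,n-k}$ for $k\le -1$. Apart from the classical inequality $\alpha_n\ge 0$, the argument is self-contained; in contrast to the smooth and piecewise-analytic cases it needs neither Faber polynomials nor fine estimates of the mapping functions, since here $\Phi$ genuinely extends across $\Gamma$.
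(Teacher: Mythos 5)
Your argument is correct. The paper itself states Carleman's theorem without proof, citing \cite{Ca23} and \cite{Gabook87}; what you have written is essentially that classical argument: lower-bound the extremal norm $1/\lambda_n^2=\|\pi_n\|^2_{L^2(G)}$ by restricting to the collar $\Psi(\{\rho<|w|<1\})$, expand $\pi_n(\Psi(w))\Psi'(w)$ in a Laurent series on $|w|>\rho$, and exploit the orthogonality of the monomials over the annulus to isolate the top coefficient $b^{n+1}$ (giving \eqref{eq:carlman1}) and to control the tail coefficients (giving, via Cauchy--Schwarz, the uniform bound on $\overline{\Omega}$ and hence \eqref{eq:carlman2}). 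The computations check out, including the two points that genuinely need care and that you flag: the term-by-term integration over the annulus and the convergence of the negative-index tail as $|w|\to 1^+$, which is indeed guaranteed by the blow-up of the weights $\mu_k$ as $k\to-\infty$. The only external input is the inequality $\alpha_n\ge 0$ (equivalently the competitor bound $\|\pi_n\|^2_{L^2(G)}\le \pi\,b^{2(n+1)}/(n+1)$); this is not obtainable from the annulus alone, but it is exactly the content of the paper's Remark~\ref{rem:beta-eps} (via Lemma~\ref{lem:alphabetaeps}), valid for any rectifiable $\Gamma$, so quoting it is legitimate. A cosmetic remark: your $G_n(w):=\pi_n(\Psi(w))\Psi'(w)$ collides with the paper's notation $G_n$ for the Faber polynomial of the second kind; rename it to avoid confusion.
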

The next major step in removing the analyticity assumption on $\Gamma$ was taken by P.K.\ Suetin in the 1960's. For his
results, Suetin requires that the boundary curve $\Gamma$ belongs to the smoothness class $C(p,\alpha)$.
This means that
$\Gamma$ is defined by $z=g(s)$, where $s$ denotes arclength, with $g^{(p)}\in \textup{Lip}\,\alpha$, for some
$p\in \mathbb{N}$ and $0<\alpha<1$. In this case both $\Phi$ and $\Psi$ are $p$ times continuously
differentiable in $\overline{\Omega}\setminus\{\infty\}$ and $\overline{\Delta}\setminus\{\infty\}$, respectively, with
$\Phi^{(p)}$ and $\Psi^{(p)}$ in $\textup{Lip}\,\alpha$. A typical result goes as follows:
\begin{mythm}[Suetin~\cite{Su74}, Thms 1.1 \& 1.2]\label{thm:suetin}
Assume that $\Gamma\in C(p+1,\alpha)$, with $p+\alpha>1/2$. Then, for any $n\in\mathbb{N}$,
\begin{equation}\label{eq:suetin1}
0\le \alpha_n\le c_8(\Gamma)\,\frac{1}{n^{2(p+\alpha)}}
\end{equation}
and
\begin{equation}\label{eq:suetin2}
|A_n(z)|\le c_9(\Gamma)\,\frac{\log n}{n^{p+\alpha}},\quad z\in\overline{\Omega}.
\end{equation}
\end{mythm}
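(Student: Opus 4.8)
The plan is to prove both parts of the theorem by the classical Faber-polynomial method, organised so that the only non-routine analytic input is the decay of the Faber coefficients; Theorems~\ref{thm:finelambdan}--\ref{thm:finepn} are not available here, since a $C(p+1,\alpha)$ curve need not be piecewise analytic. I would begin from the Kellogg--Warschawski regularity theorem: the hypothesis $\Gamma\in C(p+1,\alpha)$ forces $\Psi\in C^{p+1,\alpha}(\overline{\Delta}\setminus\{\infty\})$ and $\Phi\in C^{p+1,\alpha}(\overline{\Omega}\setminus\{\infty\})$, with $\Phi'$ bounded and bounded away from $0$ near $\Gamma$. For $m\in\mathbb{N}$ let $q_m$ be the polynomial part at infinity of $\Phi^m\Phi'=\tfrac1{m+1}(\Phi^{m+1})'$, equivalently $q_m=\tfrac1{m+1}F_{m+1}'$ with $F_{m+1}$ the Faber polynomial of $G$ of degree $m+1$ (the polynomial part at infinity of $\Phi^{m+1}$); then $\deg q_m=m$, the leading coefficient of $q_m$ equals $\gamma^{m+1}$, and $\{q_0,\dots,q_n\}$ is a basis of the polynomials of degree $\le n$. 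From $F_{m+1}(z)-\Phi(z)^{m+1}=\sum_{\ell\ge1}\alpha_{\ell,m+1}\Phi(z)^{-\ell}$ on $\overline{\Omega}$ (the series converging up to $\Gamma$ by the smoothness of $\Psi$), differentiation gives
\begin{equation}\label{eq:prop-qdiff}
q_m(z)-\Phi^m(z)\Phi'(z)=-\frac{\Phi'(z)}{m+1}\sum_{\ell\ge1}\ell\,\alpha_{\ell,m+1}\,\Phi(z)^{-\ell-1},\qquad z\in\overline{\Omega},
\end{equation}
so that $q_m$ is already, pointwise on $\overline{\Omega}$ (where $|\Phi(z)|\ge1$), a very good approximation to the conjectured leading shape $\Phi^m\Phi'$ of $p_m$.

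Next I would record the near-orthogonality of the $q_m$ in $L^2_a(G)$. Converting the area integral by Green's theorem into $\langle q_j,q_k\rangle_G=\tfrac1{2i}\oint_\Gamma q_j(z)\overline{Q_k(z)}\,dz$ with $Q_k=\tfrac1{k+1}F_{k+1}$, parametrising $\Gamma$ by $z=\Psi(e^{i\theta})$, and inserting the boundary expansions $q_j(\Psi(w))\Psi'(w)=w^j+O(w^{-2})$ and $Q_k(\Psi(w))=\tfrac1{k+1}\bigl(w^{k+1}+\sum_{\ell\ge1}\alpha_{\ell,k+1}w^{-\ell}\bigr)$, only one Fourier mode survives and one gets
\begin{equation}\label{eq:prop-gram}
\langle q_j,q_k\rangle_G=\frac{\pi}{j+1}\,\delta_{jk}+E_{jk},\qquad
E_{jk}=-\frac{\pi}{(j+1)(k+1)}\sum_{\ell\ge1}\ell\,\alpha_{\ell,j+1}\,\overline{\alpha_{\ell,k+1}}.
\end{equation}
Thus the Gram matrix of $\{q_0,\dots,q_n\}$ is $M=D+E$ with $D=\operatorname{diag}\bigl(\pi/(j+1)\bigr)$ and $E\preceq0$, whose entries are controlled, via Cauchy--Schwarz, by the Faber-coefficient sums $\sum_{\ell\ge1}\ell\,|\alpha_{\ell,m}|^2$.

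Then I would extract the two statements. Since $\pi_n:=p_n/\lambda_n=\sum_{k\le n}a_kq_k$ minimises $\langle Ma,a\rangle$ subject to $a_n=\gamma^{-(n+1)}$, the minimum equals $\gamma^{-2(n+1)}/[M^{-1}]_{nn}$, so $[M^{-1}]_{nn}=\tfrac{n+1}{\pi(1-\alpha_n)}$. Writing $M=D^{1/2}(I+S)D^{1/2}$ with $S:=D^{-1/2}ED^{-1/2}\preceq0$, one has $[M^{-1}]_{nn}=\tfrac{n+1}{\pi}\,[(I+S)^{-1}]_{nn}\ge\tfrac{n+1}{\pi}$ (since $(I+S)^{-1}\succeq I$), which is Carleman's inequality $\alpha_n\ge0$; an upper bound for $[(I+S)^{-1}]_{nn}$, obtained by a Schur-complement/perturbation argument localised to high degrees (where $S$ is small because the Faber coefficients are), then yields $0\le\alpha_n\le c_8(\Gamma)\,n^{-2(p+\alpha)}$ once the Faber-coefficient decay estimates are fed in. For (\ref{eq:suetin2}) the same extremal formula gives $a_k=O\bigl((k+1)|E_{kn}|\bigr)\,a_n$ for $k<n$ and $\lambda_na_n=\sqrt{(n+1)/\pi}\,(1+O(\alpha_n))$; expanding $p_n=\lambda_n\sum_{k\le n}a_kq_k$, dividing by $\sqrt{(n+1)/\pi}\,\Phi^n(z)\Phi'(z)$, and using (\ref{eq:prop-qdiff}) (which bounds $|q_k/(\Phi^k\Phi')-1|$ by $\tfrac1{k+1}\sum_\ell\ell|\alpha_{\ell,k+1}|$ on $\overline{\Omega}$) together with $|\Phi(z)^{k-n}|\le1$ there, one obtains, uniformly on $\overline{\Omega}$, $A_n(z)=O(\alpha_n)+O\bigl(\tfrac1{n+1}\sum_\ell\ell|\alpha_{\ell,n+1}|\bigr)+O\bigl(\sum_{k<n}(k+1)|E_{kn}|\bigr)$, which by the Faber-coefficient estimates is at most $c_9(\Gamma)\,\log n/n^{p+\alpha}$.

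The main obstacle is precisely these sharp Faber-coefficient estimates for a merely $C(p+1,\alpha)$ boundary --- for $\sum_\ell\ell|\alpha_{\ell,m}|$, for $\sum_\ell\ell|\alpha_{\ell,m}|^2$, and for the weighted variants entering the inversion and the tail above --- calibrated to reproduce the exponent $2(p+\alpha)$ in (\ref{eq:suetin1}) and the factor $\log n/n^{p+\alpha}$ in (\ref{eq:suetin2}). This is where the Kellogg--Warschawski regularity of $\Psi$ is genuinely spent: the Faber coefficients are, modulo the Newton-type combinatorics relating $F_m$ to the power sums of $\Psi$, Fourier coefficients of $\Psi'$ and of its powers, and the $\log n$ is the customary loss in upgrading an $L^2$ bound on $\dD$ to an $L^\infty$ bound. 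A second point of care is that everything must be done uniformly up to $\Gamma$, where $|\Phi(z)|=1$ so that the geometric decay $|\Phi(z)|^{-\ell}$ trivialising Carleman's analytic case is absent; here it is the boundedness away from $0$ of $\Phi'$ on $\Gamma$, again from Kellogg--Warschawski, that keeps $A_n(z)$ under control right to the boundary.
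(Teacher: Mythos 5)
First, a point of orientation: the paper does not prove this statement. It is quoted verbatim from Suetin's monograph \cite{Su74} as background to motivate the main results; the paper's own machinery (the decomposition $\alpha_n=\beta_n+\varepsilon_n$, the quasiconformal reflection, the corner analysis of Section~\ref{proofs-pa}) is developed for the piecewise analytic case and is never applied to $C(p+1,\alpha)$ boundaries. So your proposal has to be judged on its own terms. Its skeleton is a faithful reconstruction of Suetin's classical Faber-coefficient method, and the algebra you set up is correct: your Gram identity
\begin{equation*}
\langle q_j,q_k\rangle_G=\frac{\pi}{j+1}\,\delta_{jk}-\frac{\pi}{(j+1)(k+1)}\sum_{\ell\ge1}\ell\,\alpha_{\ell,j+1}\overline{\alpha_{\ell,k+1}}
\end{equation*}
is consistent with the paper's diagonal version $\|G_n\|^2_{L^2(G)}=\frac{\pi}{n+1}(1-\varepsilon_n)$ from Lemma~\ref{lem:alphabetaeps}, your variational identity $[M^{-1}]_{nn}=\frac{n+1}{\pi(1-\alpha_n)}$ is right, and $S\preceq 0$ does yield Carleman's inequality $\alpha_n\ge0$.

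As a proof, however, the proposal has genuine gaps beyond the one you flag. (1) The Faber-coefficient estimates you defer are not an appendix but the substance of the theorem: essentially all of the exponent $2(p+\alpha)$ and the factor $\log n$ live there, and deriving them from the Kellogg--Warschawski regularity of $\Psi$ is the hard part of Suetin's Chapter~1. (2) The inversion step is not closed. To pass from $[(I+S)^{-1}]_{nn}\ge1$ to $[(I+S)^{-1}]_{nn}\le 1+O(\varepsilon_n)$ you must expand $(I+S)^{-1}=I-S+(I+S)^{-1}S^2$ and control the remainder; a first-order bound via $\|Se_n\|$ alone gives only $\alpha_n=O(\sqrt{\varepsilon_n})=O(n^{-(p+\alpha)})$, losing half the exponent in (\ref{eq:suetin1}). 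The second-order argument requires $\|(I+S)^{-1}\|$ bounded uniformly in $n$, i.e.\ a uniform lower bound $I+S\succeq cI$ for the truncated Gram matrices; this amounts to the boundedness below of the Faber operator of the second kind on $L^2_a(G)$ and must be proved, not assumed --- your phrase \lq\lq Schur-complement/perturbation argument localised to high degrees'' does not supply it, since $S$ is not small on the low-degree block. (3) The bookkeeping for (\ref{eq:suetin2}) does not close with the bounds you state: Cauchy--Schwarz gives $(k+1)|E_{kn}|\le\pi\sqrt{(k+1)\varepsilon_k}\sqrt{\varepsilon_n/(n+1)}$, and with $\varepsilon_k\asymp k^{-2(p+\alpha)}$ the sum $\sum_{k<n}(k+1)|E_{kn}|$ is only $O(n^{1-2(p+\alpha)})$, which exceeds $n^{-(p+\alpha)}\log n$ throughout the admissible range $1/2<p+\alpha<1$; the true off-diagonal entries $E_{kn}$ are smaller than Cauchy--Schwarz predicts, and exploiting that is another place where the smoothness hypothesis must be spent explicitly.
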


The results of Carleman and Suetin given above, in conjunction with Theorem~\ref{thm:alphange}, yield immediately
estimates for the decay of the coefficients $b_n$, depending on the degree of analyticity, or smoothness, of $\Gamma$.
More precisely:
\begin{corollary}
Under the assumptions of the theorem of Carleman it holds, for any $n\in\mathbb{N}$, that
$$
|b_n|\le c_{10}(\Gamma)\,\frac{\varrho^n}{\sqrt{n}}.
$$
Under the assumptions of the theorem of Suetin, it holds, for any $n\in\mathbb{N}$, that
$$
|b_n|\le c_{11}(\Gamma)\,\frac{1}{n^{p+\alpha+1/2}}.
$$
\end{corollary}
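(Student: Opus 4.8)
The statement is an immediate consequence of Theorem~\ref{thm:alphange} once it is combined with the upper bounds for $\alpha_n$ supplied by the two preceding theorems. The plan is as follows. First I would check that in either setting the hypotheses of Theorem~\ref{thm:alphange} are met: an analytic Jordan curve is of class $C^\infty$, and a curve of class $C(p+1,\alpha)$ with $p\in\mathbb{N}$ is at least of class $C^1$; in either case $\Gamma$ is a smooth Jordan curve, hence quasiconformal (with some reflection factor $0\le k<1$) and of finite length, so that $A(G)<\infty$. Consequently (\ref{eq:alphange}) applies, and solving it for $|b_{n+1}|^2$ gives
\[
|b_{n+1}|^2\le\frac{A(G)}{\pi(1-k^2)}\,\frac{\alpha_n}{n+1},\qquad n\in\mathbb{N}.
\]

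Under the hypotheses of the theorem of Carleman I would now insert the bound $\alpha_n\le c_6(\Gamma)\,\rho^{2n}$ from (\ref{eq:carlman1}); this yields $|b_{n+1}|\le C(\Gamma)\,\rho^{n}/\sqrt{n+1}$, and after the substitution $n+1\mapsto n$, which costs only the $n$-independent factor $\rho^{-1}$, one obtains $|b_n|\le c_{10}(\Gamma)\,\varrho^{n}/\sqrt{n}$ with $\varrho=\rho$. Under the hypotheses of the theorem of Suetin I would instead use $\alpha_n\le c_8(\Gamma)\,n^{-2(p+\alpha)}$ from (\ref{eq:suetin1}); since $n^{-2(p+\alpha)}/(n+1)\le n^{-2(p+\alpha)-1}$, this gives $|b_{n+1}|\le C(\Gamma)\,n^{-(p+\alpha+1/2)}$, and the shift $n+1\mapsto n$ together with the elementary inequality $n-1\ge n/2$ for $n\ge 2$ produces $|b_n|\le c_{11}(\Gamma)\,n^{-(p+\alpha+1/2)}$; the finitely many small values of $n$ are absorbed by enlarging the constants.

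There is no genuine obstacle here: the argument is purely a matter of rearranging (\ref{eq:alphange}) and substituting the known upper bounds for $\alpha_n$. The only points that deserve an explicit line are the verification that the smoothness assumptions of Carleman and Suetin force $\Gamma$ to be quasiconformal and rectifiable --- which is what legitimizes treating $k$ and $A(G)$ as $\Gamma$-dependent constants --- and the routine bookkeeping involved in re-indexing $n+1\mapsto n$ and in absorbing the quantities $A(G)$, $(1-k^2)^{-1}$ and $\rho^{-1}$ into $c_{10}(\Gamma)$ and $c_{11}(\Gamma)$.
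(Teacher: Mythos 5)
Your proposal is correct and is exactly the argument the paper intends: the paper states that the corollary follows ``immediately'' from combining Theorem~\ref{thm:alphange} with the bounds (\ref{eq:carlman1}) and (\ref{eq:suetin1}), which is precisely your rearrangement $|b_{n+1}|^2\le \frac{A(G)}{\pi(1-k^2)}\frac{\alpha_n}{n+1}$ followed by substitution and re-indexing. The preliminary observation that analytic or $C(p+1,\alpha)$ boundaries are smooth, hence quasiconformal and rectifiable, is the right justification for invoking Theorem~\ref{thm:alphange}.
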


Strong asymptotics for $\lambda_n$ and $p_n$ were also derived by E.R.\ Johnston in his Ph.D.\ thesis \cite{Jo}. These
asymptotics, however, were established under analytic assumptions on certain functions related with the conformal maps
$\Phi$ and $\Psi$ (as compared to the geometric assumptions on $\Gamma$ in the theorems above) and they
do not provide the order of decay of the associated errors. An account of Johnston's results can be found in \cite{RW}.

In addition, we cite the following representative works about strong asymptotics for complex orthogonal
polynomials generated by measures supported on 2-dimensional subsets of $\mathbb{C}$:  (a) Szeg\H{o}'s book
\cite[Ch.\ XVI]{Sz}, for orthogonal polynomials with respect to the arclength measure (the so-called Szeg\H{o}
polynomials) on analytic Jordan curves; (b) Suetin's paper \cite{Su66b}, for weighted Szeg\H{o} polynomials on smooth
Jordan curves; (c) Widom's paper \cite{Wi69} for weighted Szeg\H{o} polynomials on systems of smooth Jordan curves and
smooth Jordan arcs; (d) the recent paper \cite{GPSS} by Gustafsson, Putinar, Saff and the author, for Bergman polynomials
on systems of smooth Jordan domains.

The above list is by no means complete. Nevertheless, we haven't been able to trace in the literature a single result
establishing strong asymptotics for orthogonal polynomials defined by measures supported on non-smooth domains, curves
or arcs. In this connection, we note that the well-known approach that combines the Riemann-Hilbert reformulation of
orthogonal polynomials of Fokas, Its and Kitaev \cite{FIK91}--\cite{FIK92}, with the method of steepest descent, introduced by
Deift and Zhou \cite{DZ}, cannot be applied, at least in its present form, to derive strong asymptotics for Bergman
polynomials associated with non-trivial domains. This is so, because this approach produces, invariably, orthogonal
polynomials that satisfy a finite-term recurrence relation and this is not the case with
the Bergman polynomials, as Theorem~\ref{thm:ftrr} below shows.

By contrast, strong asymptotics for orthogonal polynomials on the real line and the unit circle is a well-studied
subject. From the vast bibliography available, we cite the two volumes of B.\ Simon \cite{SimBoI}--\cite{SimBoII},
which contain a comprehensive treatment of the classical and the spectral theory of orthogonal polynomials on the unit
circle, and the recent breakthrough paper of Lubinsky~\cite{Lu09}, on universality limits for kernel polynomials
defined by positive Borel measures in $(-1,1)$.

The paper is organized as follows:
In Section~\ref{sec:faber} we study the properties of associated Faber polynomials and derive a number of preliminary
results under the assumptions: (a) $G$ is a bounded domain and (b) $\Gamma$ is a rectifiable Jordan curve.
In addition, we state a number of results that needed in the proofs of the three main theorems, under
increasing assumptions on $\Gamma$, namely:
(c) $\Gamma$ is a quasiconformal curve and (d) $\Gamma$ is a piecewise analytic curve.
The main result of Section~\ref{sec:poly-est} is a sharp estimate that relates the growth of a polynomial
in $\Omega$ to its $L^2$-norm in $G$. This estimate is essential for establishing Theorem~\ref{thm:finepn}.
Sections~\ref{proofs-qc} and \ref{proofs-pa} are devoted to the proofs of the results stated in
Section~\ref{sec:faber}, regarding assumptions (c) and (d) respectively.
Section~\ref{proofs-main} contain the proofs of the three main theorems of Section~\ref{section:intro}.
Finally, in Section~\ref{sec:appl} we present, in briefly, a number of applications of the strong asymptotics
and the associated theory.

Theorems~\ref{thm:finelambdan}--\ref{thm:finepn}, along with Corollaries~\ref{cor:ratioln}--\ref{cor:ratiopn}
and Theorems~\ref{thn:zeros}, \ref{thm:ftrr}--\ref{thm:DP}  have been presented,
without proofs, in \cite{St-CR10}.

\section{Preliminary results}\label{sec:faber}
\setcounter{equation}{0}
The Faber polynomials $\{F_n\}_{n=0}^\infty$ of $G$ are defined as the polynomial part of the expansion of $\Phi^n(z)$,
 near infinity. Therefore, from (\ref{eq:Phi}),
\begin{equation}\label{eq:PhinFn}
\Phi^n(z)=F_n(z)-E_n(z),\quad z\in\Omega,
\end{equation}
where
\begin{equation}\label{eq:Fndef}
F_n(z)=\gamma^{n}z^n+\cdots,
\end{equation}
is the Faber polynomial of degree $n$ and
\begin{equation}\label{eq:Endef}
E_n(z)=\frac{c^{(n)}_1}{z}+\frac{c^{(n)}_2}{z^2}+\frac{c^{(n)}_3}{z^3}+\cdots,
\end{equation}
is the singular part of $\Phi^n(z)$. According to the asymptotics established by Carleman, the Bergman polynomial
$p_n(z)$ is related to  $\Phi^n(z)\Phi^\prime(z)$. Consequently, we consider the polynomial part of
$\Phi^n(z)\Phi^\prime(z)$, and we denote the
resulting series by $\{G_n\}_{n=0}^\infty$. $G_n(z)$ is the so-called Faber polynomial of the 2nd kind (of degree $n$)
and satisfies
\begin{equation}\label{eq:PhinPhipGn}
\Phi^n(z)\Phi^\prime(z)=G_n(z)-H_n(z),\quad z\in\Omega,
\end{equation}
with
\begin{equation}\label{eq:Gndef}
G_n(z)=\gamma^{n+1}z^n+\cdots,
\end{equation}
and
\begin{equation}\label{eq:Hndef}
H_n(z)=\frac{a^{(n)}_2}{z^2}+\frac{a^{(n)}_3}{z^3}+\frac{a^{(n)}_4}{z^4}+\cdots,
\end{equation}
valid in a neighborhood of infinity.
It follows immediately from (\ref{eq:PhinFn}) and (\ref{eq:PhinPhipGn}) that
\begin{equation}\label{eq:GnFn+1}
G_n(z)=\frac{F_{n+1}^\prime(z)}{n+1}\quad\textup{and}\quad H_n(z)=\frac{E_{n+1}^\prime(z)}{n+1}.
\end{equation}
\begin{lemma}\label{lem:Hn-in-L2}
For any $n\in\mathbb{N}$ it holds that $H_n\in L_a^2(\Omega)$.
\end{lemma}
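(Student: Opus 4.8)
The plan is to start from identity (\ref{eq:PhinPhipGn}), which I would rewrite as
\begin{equation*}
H_n(z)=G_n(z)-\Phi^n(z)\Phi^\prime(z),\qquad z\in\Omega .
\end{equation*}
Since $G_n$ is a polynomial and $\Phi$ is analytic and injective on $\Omega$, the right-hand side is analytic throughout $\Omega$; in view of the expansion (\ref{eq:Hndef}) it is moreover analytic at $\infty$ with a zero there of order at least $2$. So the only thing left to prove is the finiteness of $\int_\Omega|H_n|^2\,dA$. I would fix $R_0>\max_{z\in\overline{G}}|z|$, so that $\{z:|z|\ge R_0\}\subset\Omega$, and split $\Omega=\Omega_1\cup\Omega_2$ with $\Omega_1:=\{z\in\Omega:|z|<R_0\}$ (bounded) and $\Omega_2:=\{z:|z|\ge R_0\}$ (a neighbourhood of $\infty$), treating the two pieces differently.

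On $\Omega_2$ I would work directly with the Laurent expansion (\ref{eq:Hndef}) of $H_n$, which converges absolutely and uniformly on $\{z:|z|\ge R_0\}$, together with the coefficient bound $|a^{(n)}_k|\le C\rho^k$ valid for some $\rho\in(\max_{z\in\overline{G}}|z|,R_0)$. Integrating in polar coordinates and using Parseval for $\{e^{-ik\theta}\}$ gives
\begin{equation*}
\int_{\Omega_2}|H_n|^2\,dA=\pi\sum_{k=2}^{\infty}\frac{|a^{(n)}_k|^2}{(k-1)\,R_0^{2(k-1)}}<\infty,
\end{equation*}
the series being geometrically convergent. It is precisely because the expansion of $H_n$ begins with $1/z^2$ (and not $1/z$) that the radial integral $\int_{R_0}^{\infty}r^{1-2k}\,dr$ converges; this is the one place where the exact form of (\ref{eq:Hndef}) is used.

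On the bounded piece $\Omega_1$ I would instead use $H_n=G_n-\Phi^n\Phi^\prime$. The polynomial term is harmless, since $\int_{\Omega_1}|G_n|^2\,dA<\infty$ because $\Omega_1$ is bounded. For the other term, $\Phi$ is bounded on $\Omega_1$: from $\Psi(w)=bw+b_0+\cdots$ (see (\ref{eq:Psi})) one gets $|\Psi(w)|>R_0$ once $|w|>R_1$ for a suitable $R_1\ge1$, hence $\Psi(\{w:|w|>R_1\})\subset\Omega_2$, and applying $\Phi$ gives $\Phi(\Omega_1)\subseteq\{w:1<|w|\le R_1\}$; in particular $\sup_{\Omega_1}|\Phi|\le R_1$. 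Since $\Phi$ is conformal, hence injective, the change-of-variables formula yields $\int_{\Omega_1}|\Phi^\prime|^2\,dA=A\big(\Phi(\Omega_1)\big)\le\pi R_1^2$, and therefore
\begin{equation*}
\int_{\Omega_1}|\Phi^n\Phi^\prime|^2\,dA\le\Big(\sup_{\Omega_1}|\Phi|\Big)^{2n}\int_{\Omega_1}|\Phi^\prime|^2\,dA\le\pi R_1^{2n+2}<\infty .
\end{equation*}
Adding the contributions of $\Omega_1$ and $\Omega_2$ gives $H_n\in L^2(\Omega)$, hence $H_n\in L_a^2(\Omega)$.

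The only point that needs attention — and the reason a neighbourhood of $\infty$ is isolated — is that one may \emph{not} estimate $\int_\Omega|H_n|^2\,dA$ by splitting $H_n=G_n-\Phi^n\Phi^\prime$ globally: the polynomial $G_n$ is not in $L^2(\Omega)$, since $\Omega$ is unbounded, so the decay of $H_n$ near $\infty$ rests entirely on the cancellation of $G_n$ against $\Phi^n\Phi^\prime$, which is what using the Laurent series of $H_n$ directly on $\Omega_2$ captures. There is no genuinely hard estimate here; in particular the possible unboundedness of $\Phi^\prime$ near $\Gamma$ causes no difficulty, being absorbed by the exact identity $\int_{\Omega_1}|\Phi^\prime|^2\,dA=A(\Phi(\Omega_1))$, so that no smoothness of $\Gamma$ beyond the standing hypotheses of this section is required.
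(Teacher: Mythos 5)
Your proof is correct and follows essentially the same route as the paper's: both split $\Omega$ into a bounded piece near $\Gamma$ (handled via $H_n=G_n-\Phi^n\Phi'$ and the change of variables $w=\Phi(z)$) and a neighbourhood of $\infty$ (handled via the Laurent expansion (\ref{eq:Hndef}) and polar coordinates). The only cosmetic difference is that the paper takes the bounded piece to be the doubly-connected region $D_R$ between $\Gamma$ and a level curve $L_R$ and evaluates $\int_{D_R}|\Phi^n\Phi'|^2\,dA$ exactly, whereas you use $\Omega\cap\{|z|<R_0\}$ and a sup-bound; this changes nothing of substance.
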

\begin{proof}
First we observe that the function $\Phi^n(z)\Phi^\prime(z)$ is square integrable in the bounded doubly-connected
domain $D_R$, defined by the boundary curve $\Gamma$ and the level line
\begin{equation*}
L_R:=\{z:|\Phi(z)|=R\}=\{\Psi(w):|w|=R\},\quad R>1.
\end{equation*}
Indeed, by making the change of variables $w=\Phi(z)$, we have
\begin{equation*}
\int_{D_R}|\Phi^n(z)\Phi^\prime(z)|^2dA(z)=\int_{1<|w|<R}|w|^{2n}dA(w)=
\frac{\pi}{n+1}\{R^{2(n+1)}-1\}.
\end{equation*}
Therefore,
\begin{align}\label{eq:intHn1}
\left[\int_{D_R}|H_n(z)|^2dA(z)\right]^{1/2}
&\le\left[\int_{D_R}|\Phi^n(z)\Phi^\prime(z)|^2dA(z)\right]^{1/2}+
\left[\int_{D_R}|G_n(z)|^2dA(z)\right]^{1/2}\nonumber\\
&<\infty.
\end{align}

Next, from the splitting (\ref{eq:PhinPhipGn}) we see that $H_n(z)$ is analytic in $\Omega$ and has a double zero
at infinity.
Assume that (\ref{eq:Hndef}) is valid for $|z|>R_1$.
Then $\limsup_{k\to\infty}|a_k^{(n)}|^{1/k}=R_1$ and, hence, the estimate
\begin{equation*}\label{eq:ank-esti}
|a_k^{(n)}|\le c\, R_2^k
\end{equation*}
holds for some $R_2>R_1$. Therefore, for any $R_3>1$, with $R_3>R_2$, we have:
\begin{align}\label{eq:intHn2}
\int_{|z|>R_3}|H_n(z)|^2dA(z)
&=\int_0^{2\pi}\int_{R_3}^\infty|H_n(r\textup{e}^{i\theta})|^2rdrd\theta=
\sum_{k=2}^\infty\frac{|a_k^{(n)}|^2}{(k-1)R_3^{2(k-1)}}\nonumber\\
&\le c \sum_{k=2}^\infty\frac{R_2^{2k}}{(k-1)R_3^{2(k-1)}}<\infty.
\end{align}

Now, choose $R$ sufficiently large so that $D_R$ contains the circle $\{z:|z|=R_3\}$. Then, the
result $\|H_n\|_{L^2(\Omega)}<\infty$ follows at once from the two estimates (\ref{eq:intHn1}) and (\ref{eq:intHn2}).
\end{proof}

Remark~\ref{rem:beta-eps} and Theorem~\ref{thm:epsn} below show that a lot more can be said about the behaviour of
$\|H_n\|_{L^2(\Omega)}$, under additional assumptions on $\Gamma$.

\subsection{Results for rectifiable boundary}\label{sec2:rect}
We assume now that the boundary $\Gamma$ is \textit{rectifiable}. (Further assumption on $\Gamma$ will be
imposed in various parts of the paper.) For rectifiable $\Gamma$, Cauchy's integral formula yields the following
representation for the Faber polynomial $F_n(z)$ and its corresponding singular part $E_n$(z):
\begin{equation}\label{eq:FnIntRep}
F_n(z)=\frac{1}{2\pi i}\int_\Gamma\frac{\Phi^n(\zeta)}{\zeta-z}\,d\zeta,\quad z\in G,
\end{equation}
\begin{equation}\label{eq:EnIntRep}
E_n(z)=\frac{1}{2\pi i}\int_\Gamma\frac{\Phi^n(\zeta)}{\zeta-z}\,d\zeta,\quad z\in \Omega.
\end{equation}

It is well-known that the assumption on $\Gamma$ implies the facts that $\Phi^\prime$ belongs to the Smirnov class
$E^1(\Omega)$, that both $\Phi^\prime$ and $\Psi^\prime$ have non-tangential limits almost everywhere on $\Gamma$ and
$\partial\mathbb{D}$, respectively, and that they are integrable with respect to the arclength measure, i.e.,
\begin{equation}\label{eq:Phi-prime-in-E1}
\int_\Gamma |\Phi^\prime(\zeta)|\,|d\zeta|<\infty,\quad \textup{and}\quad
\int_\mathbb{T} |\Psi^\prime(t)|\,|dt|<\infty;
\end{equation}
see, e.g., \cite[Ch.~10]{Du70},  \cite{KhD82} and \cite[\S 6.3]{Po92}. Hence
$H_n\in E^1(\Omega)$ and therefore (\ref{eq:PhinPhipGn}) yields the following two Cauchy representations:
\begin{equation}\label{eq:GnIntRep}
G_n(z)=\frac{1}{2\pi i}\int_\Gamma\frac{\Phi^n(\zeta)\Phi^\prime(\zeta)}{\zeta-z}\,d\zeta,\quad z\in G,
\end{equation}
and
\begin{equation}\label{eq:HnIntRep}
H_n(z)=\frac{1}{2\pi i}\int_\Gamma\frac{\Phi^n(\zeta)\Phi^\prime(\zeta)}{\zeta-z}\,d\zeta,\quad z\in \Omega;
\end{equation}
cf.\ \cite[Thm 10.4]{Du70}. We note the following estimate, which is a simple consequence
of (\ref{eq:Phi-prime-in-E1}) and the
representation (\ref{eq:HnIntRep}):
\begin{equation}\label{eq:Hn-decay-rect}
|H_n(z)|\le\frac{c_1(\Gamma)}{\dist(z,\Gamma)},\quad z\in\Omega.
\end{equation}

Next, we single out three identities, which we are going to use below.
\begin{lemma}\label{lem:usefull-1}
Assume that the boundary $\Gamma$ is rectifiable. Then, for any $m,n\in\mathbb{N}$, there holds:
\begin{equation}\label{eq:PhimPhin}
\frac{1}{2\pi i}\int_\Gamma \Phi^m(z)\Phi^\prime(z)\overline{\Phi^{n+1}(z)}dz=\delta_{m,n},
\end{equation}
and
\begin{equation}\label{eq:PhiEmHn}
\int_\Gamma H_m(z)\overline{\Phi^{n+1}(z)}dz=0=
\int_\Gamma \Phi^m(z)\Phi^\prime(z)\overline{E_{n+1}(z)}dz,
\end{equation}
where $\delta_{m,n}$ denotes Kronecker's delta function.
\end{lemma}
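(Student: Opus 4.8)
The plan is to transplant all three integrals from $\Gamma$ to the unit circle $\mathbb{T}$ by the substitution $z=\Psi(w)$ and then evaluate everything by residues, i.e.\ by reading off a single Fourier coefficient. The legitimacy of this substitution — and of the manipulations with boundary values below — rests on the consequences of rectifiability collected in \S\ref{sec2:rect}: $\Phi'$ and $\Psi'$ lie in the relevant Smirnov classes and have non-tangential boundary values a.e.\ on $\Gamma$ and $\mathbb{T}$, $H_n\in E^1(\Omega)$, and $\int_\Gamma f(z)\,dz=\int_{\mathbb{T}}f(\Psi(w))\Psi'(w)\,dw$ holds for the integrands at hand. Since $\Psi$ maps $\Delta=\{|w|>1\}$ conformally onto $\Omega$, the positively oriented curve $\Gamma=\partial G$ corresponds to $\mathbb{T}$ traversed counterclockwise. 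Two elementary facts will be used throughout: $\Phi(\Psi(w))=w$, hence $\Phi'(\Psi(w))\Psi'(w)=(\Phi\circ\Psi)'(w)=1$; and $\overline{w}=w^{-1}$ on $\mathbb{T}$.

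For (\ref{eq:PhimPhin}) the substitution turns $\Phi^m(z)\Phi'(z)\,dz$ into $w^m\big(\Phi'(\Psi(w))\Psi'(w)\big)\,dw=w^m\,dw$ and $\overline{\Phi^{n+1}(z)}$ into $w^{-(n+1)}$, so the left-hand side becomes $\tfrac{1}{2\pi i}\int_{\mathbb{T}}w^{m-n-1}\,dw$, which equals $1$ when $m=n$ and $0$ otherwise; that is, $\delta_{m,n}$.

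For the two identities in (\ref{eq:PhiEmHn}) the same substitution gives $\int_{\mathbb{T}}H_m(\Psi(w))\Psi'(w)\,w^{-(n+1)}\,dw$ and $\int_{\mathbb{T}}w^m\,\overline{E_{n+1}(\Psi(w))}\,dw$, respectively. In the first, since $H_m\in E^1(\Omega)$ the transplanted function $w\mapsto H_m(\Psi(w))\Psi'(w)$ is integrable on $\mathbb{T}$; it is analytic in $\Delta$ and, by (\ref{eq:Hndef}) and (\ref{eq:Psi}), vanishes to order $2$ at infinity, and multiplying by $w^{-(n+1)}$ with $n\ge1$ only raises that order. An integrable function on $\mathbb{T}$ admitting an analytic extension to $\Delta$ that vanishes to order $\ge2$ at infinity has vanishing $w^{-1}$-Fourier coefficient, hence zero integral over $\mathbb{T}$ (equivalently, deform $\mathbb{T}$ outward to $\{|w|=\rho\}$ and let $\rho\to\infty$); this is the first identity. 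In the second, $w\mapsto E_{n+1}(\Psi(w))$ is analytic in $\Delta$ with a simple zero at infinity and — being, by (\ref{eq:PhinFn}), the difference of a polynomial and $\Phi^{n+1}$, both continuous on $\overline{\Omega}$ — extends continuously to $\overline{\Delta}$. Its boundary function therefore has Fourier expansion $\sum_{k\ge1}d_k\,w^{-k}$ supported on strictly negative frequencies, so $\overline{E_{n+1}(\Psi(\cdot))}$ is supported on frequencies $\ge1$, and $\int_{\mathbb{T}}w^m\,\overline{E_{n+1}(\Psi(w))}\,dw$ equals $2\pi i$ times the Fourier coefficient of that function at frequency $-(m+1)\le-2$, which vanishes.

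The only point that needs care is the rigorous justification, for merely rectifiable $\Gamma$, of the change of variables and of the identification of Laurent with Fourier coefficients on $\mathbb{T}$ — but this is exactly the Smirnov-class content assembled in \S\ref{sec2:rect}. Granted that, each of the three identities reduces to a one-line computation on the circle, so I expect the bookkeeping, rather than any genuine difficulty, to be the main thing to get right.
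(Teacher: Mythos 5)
Your proof is correct, and for the orthogonality relation (\ref{eq:PhimPhin}) it is essentially the paper's argument: replace $\overline{\Phi^{n+1}}$ by its reciprocal on $\Gamma$ and read off a residue after transplanting to a circle (the paper first deforms to a level curve $L_R$ and then sets $w=\Phi(z)$; you substitute $z=\Psi(w)$ on $\Gamma$ directly, which the Smirnov-class facts of \S\ref{sec2:rect} justify). For the two vanishing identities in (\ref{eq:PhiEmHn}) your route differs in the details: the paper inserts the splittings (\ref{eq:PhinPhipGn}) and (\ref{eq:PhinFn}), reduces each integral to (\ref{eq:PhimPhin}) plus a residue computation for the polynomial parts $G_m$ and $F_{n+1}$, whereas you keep the singular parts $H_m$ and $E_{n+1}$ and argue directly from their analyticity and decay in $\Delta$ --- a contour push to infinity (legitimate since $H_m\in E^1(\Omega)$ gives $H_m(\Psi(w))\Psi'(w)\in H^1$ of $\Delta$) for the first, and a Fourier-support argument for the conjugated boundary values of $E_{n+1}\circ\Psi$ (continuous on $\overline{\Delta}$, since $E_{n+1}=F_{n+1}-\Phi^{n+1}$ is continuous on $\overline{\Omega}$) for the second. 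The analytic input is the same in both versions, so the difference is organizational: yours makes the frequency mismatch behind each vanishing more transparent, while the paper's reuses (\ref{eq:PhimPhin}) and only ever computes residues of functions with explicit Laurent tails at infinity.
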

\begin{proof}
Since $\Phi^\prime\in E^1(\Omega)$, the application of Cauchy's theorem and
the change of variables $w=\Phi(z)$ give, for some $R>1$,
\begin{equation*}
\begin{alignedat}{1}
\frac{1}{2\pi i}\int_\Gamma \Phi^m(z)\Phi^\prime(z)\overline{\Phi^{n+1}(z)}dz
&=\frac{1}{2\pi i}\int_\Gamma \frac{\Phi^m(z)\Phi^\prime(z)}{\Phi^{n+1}(z)}dz\\
&=\frac{1}{2\pi i}\int_{L_R} \frac{\Phi^m(z)\Phi^\prime(z)}{\Phi^{n+1}(z)}dz
=\frac{1}{2\pi i}\int_{|w|=R} \frac{w^m\,dw}{w^{n+1}},
\end{alignedat}
\end{equation*}
and the result (\ref{eq:PhimPhin}) follows from the residue theorem.

Next, using the splitting (\ref{eq:PhinPhipGn}) in conjunction with (\ref{eq:PhimPhin}) we obtain:
\begin{equation*}
\begin{alignedat}{1}
\frac{1}{2\pi i}\int_\Gamma H_m(z)\overline{\Phi^{n+1}(z)}dz
&=\frac{1}{2\pi i}\int_\Gamma G_{m}(z) \overline{\Phi^{n+1}(z)}dz\\
&-\frac{1}{2\pi i}\int_\Gamma \Phi^m(z)\Phi^\prime(z) \overline{\Phi^{n+1}(z)}dz\\
&=\frac{1}{2\pi i}\int_\Gamma \frac{G_{m}(z)}{\Phi^{n+1}(z)}dz-\delta_{m,n}.
\end{alignedat}
\end{equation*}
The first identity in (\ref{eq:PhiEmHn}) then follows from the residue theorem, because the value of the last integral
is $\delta_{m,n}$; cf.\ (\ref{eq:Phi}) and (\ref{eq:Gndef}).

Finally, using the splitting (\ref{eq:PhinFn}), and making
the change of variables $w=\Phi(z)$, we obtain from (\ref{eq:PhimPhin}) that
\begin{equation*}
\begin{alignedat}{1}
\frac{1}{2\pi i}\int_\Gamma \Phi^m(z)\Phi^\prime(z)\overline{E_{n+1}}(z)\,dz
&=\frac{1}{2\pi i}\int_\Gamma \Phi^m(z)\Phi^\prime(z) \overline{F_{n+1}(z)}dz\\&
-\frac{1}{2\pi i}\int_\Gamma \Phi^m(z)\Phi^\prime(z) \overline{\Phi^{n+1}(z)}dz\\
&=\frac{1}{2\pi i}\int_{|w|=1} w^m\overline{F_{n+1}(\Psi(w))}\,dw-\delta_{m,n}.
\end{alignedat}
\end{equation*}
The second identity in (\ref{eq:PhiEmHn}) then follows by means of the residue theorem, which again implies that
the value of the last integral is $\delta_{m,n}$. This is readily verified by noting that (\ref{eq:Psi}) and (\ref{eq:Fndef}) give, for $|w|=1$,
$$
\overline{F_{n+1}(\Psi(w))}=\overline{\gamma^{n+1}b^{n+1}w^{n+1}+\cdots}=\overline{w^{n+1}+\cdots}=1/w^{n+1}+\cdots.
$$
\end{proof}

With the help of $G_n(z)$ we define an auxiliary polynomial that plays a crucial role in the course our study:
\begin{equation}\label{eq:qndef}
q_{n-1}(z):=G_n(z)-\frac{\gamma^{n+1}}{\lambda_n}p_n(z),\quad n\in\mathbb{N}.
\end{equation}
Observe that $q_{n-1}(z)$ has degree at most $n-1$, but it can be identical to zero, as the special
case $G\equiv\mathbb{D}$ shows.

By noting the relation
\begin{equation}\label{eq:pnPhinPhip}
p_n(z)=\frac{\lambda_n}{\gamma^{n+1}}\,\Phi^n(z)\Phi^\prime(z)
\left\{1+\frac{H_n(z)}{\Phi^n(z)\Phi^\prime(z)}
-\frac{q_{n-1}(z)}{\Phi^n(z)\Phi^\prime(z)}\right\},
\end{equation}
which follows at once from (\ref{eq:PhinPhipGn}) and (\ref{eq:qndef}) and is valid for any $z\in\Omega$ (since
$\Phi^\prime(z)\ne 0$), it is not surprising that we formulate our results in terms of the  following two
sequences of nonnegative numbers:
\begin{equation}\label{eq:betandef}
\beta_n:=\frac{n+1}{\pi}\|q_{n-1}\|^2_{L^2(G)},\quad n\in\mathbb{N},
\end{equation}
and
\begin{equation}\label{eq:epsndef}
\varepsilon_{n}:=\frac{n+1}{\pi}\|H_n\|^2_{L^2(\Omega)},\quad n\in\mathbb{N}.
\end{equation}

The proof of  Theorems~\ref{thm:finelambdan} and \ref{thm:finepn} amounts, eventually, to establishing that the
two sequences $\{\beta_n\}_{n\in\mathbb{N}}$ and $\{\varepsilon_n\}_{n\in\mathbb{N}}$ decay to zero like $O(1/n)$.
To this end, a representation  of $\beta_n$ and $\varepsilon_n$ as line integrals will be useful:

\begin{lemma}\label{lem-usefull-2}
Assume that the boundary $\Gamma$ is rectifiable. Then, for any $n\in\mathbb{N}$, there holds:
\begin{equation}\label{eq:betanEn}
\beta_n=\frac{1}{2\pi i}\,\int_\Gamma q_{n-1}(z)\overline{E_{n+1}(z)}\,dz,
\end{equation}
and
\begin{equation}\label{eq:epsnEn}
\varepsilon_{n}=-\frac{1}{2\pi i}\,\int_\Gamma H_n(z)\overline{E_{n+1}(z)}\,dz.
\end{equation}
\end{lemma}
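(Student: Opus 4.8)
The plan is to convert the two $L^2$-norms in \eqref{eq:betandef} and \eqref{eq:epsndef} into contour integrals over $\Gamma$ by Green's (Stokes') theorem. The one elementary ingredient is this: if $f,g$ are analytic on a bounded Jordan domain $D$, of class $C^1$ up to $\partial D$, and $\mathcal{G}'=g$, then $\int_D f\,\overline{g}\,dA=\frac{1}{2i}\int_{\partial D}f\,\overline{\mathcal{G}}\,dz$ (with $\partial D$ positively oriented), since $d\bigl(f\,\overline{\mathcal{G}}\,dz\bigr)=2i\,f\,\overline{g}\,dA$. For \eqref{eq:betanEn} I would first note that $q_{n-1}$ has degree $\le n-1$ and is therefore orthogonal in $L_a^2(G)$ to $p_n$; so, by \eqref{eq:qndef},
\[
\|q_{n-1}\|_{L^2(G)}^2=\langle q_{n-1},q_{n-1}\rangle_G=\langle q_{n-1},G_n\rangle_G.
\]
Applying the ingredient with $D=G$, $g=G_n=F_{n+1}'/(n+1)$ (see \eqref{eq:GnFn+1}) and $\mathcal{G}=F_{n+1}/(n+1)$ — legitimate since $q_{n-1}$ and $F_{n+1}$ are polynomials — gives $\|q_{n-1}\|_{L^2(G)}^2=\frac{1}{2i(n+1)}\int_\Gamma q_{n-1}(z)\,\overline{F_{n+1}(z)}\,dz$.

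It remains to discard the $\Phi^{n+1}$-part of $F_{n+1}$. On $\Gamma$ we have $F_{n+1}=\Phi^{n+1}+E_{n+1}$ by \eqref{eq:PhinFn} (valid up to $\Gamma$, since $\Phi$ extends continuously there), while $q_{n-1}(z)\,\overline{\Phi^{n+1}(z)}=q_{n-1}(z)\,\Phi^{-(n+1)}(z)$ on $\Gamma$ is the boundary value of $q_{n-1}\,\Phi^{-(n+1)}$, a function analytic in $\Omega$ which is $O(|z|^{-2})$ at infinity; deforming $\Gamma$ to $L_R$ and letting $R\to\infty$ shows its $\Gamma$-integral is $0$. (Equivalently: expand $q_{n-1}$ in the basis $\{G_k\}_{k\le n-1}$, use \eqref{eq:PhinPhipGn}, and quote Lemma~\ref{lem:usefull-1}.) Hence $\|q_{n-1}\|_{L^2(G)}^2=\frac{1}{2i(n+1)}\int_\Gamma q_{n-1}\,\overline{E_{n+1}}\,dz$, and multiplying by $(n+1)/\pi$ yields \eqref{eq:betanEn}.

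For \eqref{eq:epsnEn} the same scheme applies to $\|H_n\|_{L^2(\Omega)}^2=\int_\Omega|H_n|^2\,dA$ with $g=H_n=E_{n+1}'/(n+1)$ and $\mathcal{G}=E_{n+1}/(n+1)$, but since $H_n$ is only analytic in $\Omega$ I would first integrate over the annulus $D_{r,R}:=\{z:r<|\Phi(z)|<R\}$, $1<r<R$, whose closure lies in $\Omega$; its positively oriented boundary being $L_R$ counterclockwise minus $L_r$ counterclockwise, the ingredient gives
\[
\int_{D_{r,R}}|H_n(z)|^2\,dA(z)=\frac{1}{2i(n+1)}\Bigl(\int_{L_R}H_n\,\overline{E_{n+1}}\,dz-\int_{L_r}H_n\,\overline{E_{n+1}}\,dz\Bigr).
\]
Letting $R\to\infty$ kills the $L_R$-term, since $H_n(z)=O(|z|^{-2})$ and $E_{n+1}(z)=O(|z|^{-1})$ near infinity (by \eqref{eq:Hndef} and \eqref{eq:Endef}) while $L_R$ has length $O(R)$; letting $r\to1^+$ sends the left side to $\|H_n\|_{L^2(\Omega)}^2$ (Lemma~\ref{lem:Hn-in-L2}) and the $L_r$-term to $\int_\Gamma H_n\,\overline{E_{n+1}}\,dz$. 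This gives $\|H_n\|_{L^2(\Omega)}^2=-\frac{1}{2i(n+1)}\int_\Gamma H_n\,\overline{E_{n+1}}\,dz$, whence \eqref{eq:epsnEn} after multiplication by $(n+1)/\pi$.

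The step I expect to be the main obstacle is the limit $r\to1^+$ just used: since $\Gamma$ is merely rectifiable, $H_n$ need not extend continuously to $\Gamma$, so the convergence $\int_{L_r}H_n\,\overline{E_{n+1}}\,dz\to\int_\Gamma H_n\,\overline{E_{n+1}}\,dz$ has to be pushed through the Smirnov-class membership $H_n\in E^1(\Omega)$ established above (together with $E_{n+1}=F_{n+1}-\Phi^{n+1}$ being bounded and continuous up to $\Gamma$) — precisely the $E^1$-theory that underlies the Cauchy representation \eqref{eq:HnIntRep}. Everything on the $G$-side is, by contrast, completely elementary, since there only polynomials occur.
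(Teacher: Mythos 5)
Your proposal is correct and follows essentially the same route as the paper: orthogonality of $q_{n-1}$ to $p_n$ plus Green's formula (with antiderivative $F_{n+1}/(n+1)$, resp.\ $E_{n+1}/(n+1)$, via (\ref{eq:GnFn+1})), discarding the $\Phi^{n+1}$-term by the residue theorem, and Green's formula in the unbounded domain $\Omega$ for $\varepsilon_n$. The only difference is that you spell out the exhaustion $D_{r,R}$ and the $E^1$-limit $r\to 1^+$ that the paper leaves implicit in its appeal to ``Green's formula in the unbounded domain.''
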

\begin{proof}
To derive (\ref{eq:betanEn}) we use the orthogonality of $p_n$, (\ref{eq:GnFn+1}) and Green's formula to conclude, in steps, that
\begin{equation*}
\begin{alignedat}{1}
\|q_{n-1}\|^2_{L^2(G)}
&=\langle q_{n-1},G_n-\frac{\gamma^{n+1}}{\lambda_n}p_n\rangle=\langle q_{n-1},G_n\rangle \\
&=\int_G q_{n-1}(z)\overline{G_{n}(z)}\,dA(z)
=\frac{1}{n+1}\int_G q_{n-1}(z)\overline{F_{n+1}^\prime(z)}\,dA(z)\\
&=\frac{\pi}{n+1}\,\frac{1}{2\pi i}\int_\Gamma q_{n-1}(z)\overline{F_{n+1}(z)}\,dz.
\end{alignedat}
\end{equation*}
Hence, from (\ref{eq:PhinFn}),
$$
\frac{n+1}{\pi}\|q_{n-1}\|^2_{L^2(G)}
=\frac{1}{2\pi i}\int_\Gamma q_{n-1}(z)\overline{E_{n+1}(z)}\,dz
+\frac{1}{2\pi i}\int_\Gamma q_{n-1}(z)\overline{\Phi^{n+1}(z)}\,dz,
$$
and the result (\ref{eq:betanEn}) follows, because the last integral vanishes, as it can be readily seen after replacing $\overline{\Phi^{n+1}(z)}$ by $1/\Phi^{n+1}(z)$  and applying the residue theorem.

Next, we recall that $E_{n+1}$ is analytic in $\Omega$, including $\infty$, and continuous on $\overline{\Omega}$,
and that $H_n\in L_a^2(\Omega)\cap E^1(\Omega)$. The result (\ref{eq:epsnEn}) follows from the application of
Green's formula in the unbounded domain $\Omega$ and (\ref{eq:GnFn+1}). That is,
\begin{equation*}
\begin{alignedat}{1}
-\frac{1}{2\pi i}\,\int_\Gamma H_n(z)\overline{E_{n+1}(z)}\,dz
&=\frac{1}{\pi}\,\int_\Omega H_n(z)\overline{E^\prime_{n+1}(z)}\,dA(z)
=\frac{n+1}{\pi}\|H_n\|^2_{L^2(\Omega)}.
\end{alignedat}
\end{equation*}
\end{proof}

It turns out that the strong asymptotic error $\alpha_n$ has a very simple connection with the
quantities $\beta_n$ and $\varepsilon_{n}$, namely,
$$
\alpha_n=\beta_n+\varepsilon_{n}.
$$
(This, actually, explains the presence of the fractional term $(n+1)/\pi$ in the definition of $\beta_n$ and $\varepsilon_{n}$ above.)
\begin{lemma}\label{lem:alphabetaeps}
Assume that the boundary $\Gamma$ is rectifiable. Then, for any $n\in\mathbb{N}$, it holds that
\begin{equation}\label{eq:alphabetaeps}
\frac{n+1}{\pi}\frac{\gamma^{2(n+1)}}{\lambda_n^2}=1-\left(\beta_n+\varepsilon_{n}\right).
\end{equation}
\end{lemma}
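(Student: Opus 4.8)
The plan is to derive \eqref{eq:alphabetaeps} from the sharper ``Pythagorean'' identity
\[
\|G_n\|_{L^2(G)}^2+\|H_n\|_{L^2(\Omega)}^2=\frac{\pi}{n+1},\qquad n\in\mathbb{N}.
\]
Granting this for a moment, \eqref{eq:alphabetaeps} follows immediately: the definition \eqref{eq:qndef} of $q_{n-1}$ exhibits $G_n=q_{n-1}+(\gamma^{n+1}/\lambda_n)p_n$ as the sum of a polynomial of degree $\le n-1$ and a scalar multiple of $p_n$, so the orthogonality of $p_n$ to all lower-degree polynomials together with $\|p_n\|_{L^2(G)}=1$ gives $\|G_n\|_{L^2(G)}^2=\|q_{n-1}\|_{L^2(G)}^2+\gamma^{2(n+1)}/\lambda_n^2$; substituting this into the displayed identity and multiplying by $(n+1)/\pi$ turns it, via the definitions \eqref{eq:betandef} and \eqref{eq:epsndef}, into exactly \eqref{eq:alphabetaeps}.

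To prove the Pythagorean identity I would start from $G_n=F_{n+1}'/(n+1)$ and $H_n=E_{n+1}'/(n+1)$ from \eqref{eq:GnFn+1} and apply the complex form of Green's theorem, $\int_D|h'|^2\,dA=\tfrac1{2i}\int_{\partial D}\overline{h}\,h'\,dz$ for $h$ analytic, in $G$ (where $F_{n+1}$ is a polynomial) and in $\Omega$ (where $E_{n+1}$ is analytic and continuous up to $\overline{\Omega}$, with $E_{n+1}(z)=O(1/z)$ and $E_{n+1}'(z)=O(1/z^2)$ at infinity, so the contribution of the circle $\{|z|=R\}$ is $O(1/R^2)$ and disappears, while the orientation of $\partial\Omega$ contributes a sign). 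This produces
\[
\|G_n\|_{L^2(G)}^2=\frac{\pi}{n+1}\,\frac{1}{2\pi i}\int_\Gamma\overline{F_{n+1}(z)}\,G_n(z)\,dz,\qquad
\|H_n\|_{L^2(\Omega)}^2=-\frac{\pi}{n+1}\,\frac{1}{2\pi i}\int_\Gamma\overline{E_{n+1}(z)}\,H_n(z)\,dz.
\]
Adding the two, and substituting on $\Gamma$ the boundary decompositions $F_{n+1}=\Phi^{n+1}+E_{n+1}$ and $G_n=\Phi^n\Phi'+H_n$ from \eqref{eq:PhinFn} and \eqref{eq:PhinPhipGn} (valid a.e.\ on $\Gamma$ since it is rectifiable), the cross term $\overline{E_{n+1}}\,H_n$ cancels, leaving
\[
\|G_n\|_{L^2(G)}^2+\|H_n\|_{L^2(\Omega)}^2=\frac{\pi}{n+1}\,\frac{1}{2\pi i}\int_\Gamma\Big(\overline{\Phi^{n+1}}\,\Phi^n\Phi'+\overline{\Phi^{n+1}}\,H_n+\overline{E_{n+1}}\,\Phi^n\Phi'\Big)\,dz.
\]
Here the first integral equals $1$ by \eqref{eq:PhimPhin} with $m=n$, and the remaining two vanish by the two identities of \eqref{eq:PhiEmHn} with $m=n$, which proves the identity.

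The only genuinely delicate points are the application of Green's formula on the \emph{unbounded} domain $\Omega$ — one localizes to $\Omega\cap\{|z|<R\}$, checks the decay of the circle contribution, and tracks the orientation of $\partial\Omega$ — and the justification that \eqref{eq:PhinFn} and \eqref{eq:PhinPhipGn} may be read as almost-everywhere identities of nontangential boundary values on the merely rectifiable curve $\Gamma$; both are handled by the Smirnov-class facts recalled in Section~\ref{sec2:rect} ($\Phi'\in E^1(\Omega)$ and $H_n\in E^1(\Omega)$), just as in the proof of Lemma~\ref{lem-usefull-2}. As an alternative that sidesteps the Pythagorean identity, one may instead add the line-integral formulas \eqref{eq:betanEn} and \eqref{eq:epsnEn}, observe that on $\Gamma$ one has $q_{n-1}-H_n=\Phi^n\Phi'-(\gamma^{n+1}/\lambda_n)p_n$, remove the $\Phi^n\Phi'$ contribution by the second identity of \eqref{eq:PhiEmHn}, and evaluate $\tfrac1{2\pi i}\int_\Gamma p_n\,\overline{E_{n+1}}\,dz$ by splitting $E_{n+1}=F_{n+1}-\Phi^{n+1}$: Green's formula combined with orthonormality handles the $F_{n+1}$ part (giving $\tfrac{n+1}{\pi}\,\gamma^{n+1}/\lambda_n$) and a residue-at-infinity computation handles the $\Phi^{n+1}$ part (giving $\lambda_n/\gamma^{n+1}$); rearranging again yields \eqref{eq:alphabetaeps}.
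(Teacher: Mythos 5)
Your proof is correct and follows essentially the same route as the paper: Green's formula in $G$ and in $\Omega$ applied via \eqref{eq:GnFn+1}, the boundary splittings \eqref{eq:PhinFn} and \eqref{eq:PhinPhipGn}, the identities of Lemma~\ref{lem:usefull-1} to evaluate the resulting integrals, and Pythagoras for the orthogonal decomposition $G_n=q_{n-1}+(\gamma^{n+1}/\lambda_n)p_n$. Your ``Pythagorean identity'' $\|G_n\|_{L^2(G)}^2+\|H_n\|_{L^2(\Omega)}^2=\pi/(n+1)$ is just the paper's intermediate relation \eqref{eq:Gnepsn} rewritten with the definition \eqref{eq:epsndef} of $\varepsilon_n$, the only difference being that the paper identifies the leftover term $\frac{1}{2\pi i}\int_\Gamma H_n\overline{E_{n+1}}\,dz$ as $-\varepsilon_n$ through Lemma~\ref{lem-usefull-2} rather than adding the two Green's formulas up front.
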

\begin{proof}
Green's formula, in conjunction with (\ref{eq:GnFn+1}), yields:
\begin{equation*}
\|G_n\|_{L^2(G)}^2=\frac{1}{n+1}\,\int_G G_{n}(z)\overline{F^\prime_{n+1}(z)}\,dA(z)
=\frac{\pi}{n+1}\frac{1}{2\pi i}\,\int_\Gamma G_{n}(z)\overline{F_{n+1}(z)}\,dz.
\end{equation*}
Next, replace in the last integral $G_n(z)$ and $F_{n+1}(z)$ by their counterparts given in the splittings
(\ref{eq:PhinFn}) and (\ref{eq:Endef}), respectively, to obtain:
\begin{equation*}
\begin{alignedat}{1}
\int_\Gamma G_{n}(z)\overline{F_{n+1}(z)}\,dz
&=\int_\Gamma \Phi^n(z)\Phi^\prime(z)\overline{\Phi^{n+1}(z)}dz
+\int_\Gamma \Phi^n(z)\Phi^\prime(z)\overline{E_{n+1}(z)}dz\\
&+\int_\Gamma H_n(z)\overline{\Phi^{n+1}(z)}dz
+\int_\Gamma H_n(z)\overline{E_{n+1}(z)}dz.
\end{alignedat}
\end{equation*}
It therefore follows from Lemma~\ref{lem:usefull-1} that
\begin{equation*}
\|G_n\|_{L^2(G)}^2=\frac{\pi}{n+1}\left[1+\frac{1}{2\pi i}\,\int_\Gamma H_{n}(z)\overline{E_{n+1}(z)}\,dz\right],
\end{equation*}
which, in view of Lemma~\ref{lem-usefull-2}, yields the relation
\begin{equation}\label{eq:Gnepsn}
\|G_n\|_{L^2(G)}^2=\frac{\pi}{n+1}\,\left(1-\varepsilon_n\right),\quad n\in\mathbb{N}.
\end{equation}

At the other hand, since $ p_n\bot\,q_{n-1}$, we have from Pythagoras' theorem  that
\begin{eqnarray}\label{eq:pytha}
\|G_n\|^2_{L^2(G)}=\|\frac{\gamma^{n+1}}{\lambda_n}\,p_n+q_{n-1}\|^2_{L^2(G)}
       =\frac{\gamma^{2(n+1)}}{\lambda_n^2}+\|q_{n-1}\|^2_{L^2(G)},
\end{eqnarray}
and (\ref{eq:alphabetaeps}) follows by comparing (\ref{eq:Gnepsn}) with (\ref{eq:pytha}) and using the definition of
$\beta_n$ in (\ref{eq:betandef}).
\end{proof}

\begin{remark}\label{rem:beta-eps}
It follows immediately from (\ref{eq:alphabetaeps}) and (\ref{eq:Gnepsn}) that
\begin{equation}\label{eq:beta-eps}
0\le\beta_n+\varepsilon_n <1, \quad 0\le\varepsilon_n <1\quad and\quad 0\le\beta_n<1.
\end{equation}
In particular, these inequalities lead to the following three estimates
\begin{equation}\label{eq:|Gn|-rect}
\|G_n\|_{L^2(G)}\le\sqrt{\frac{\pi}{n+1}},\quad n\in\mathbb{N},
\end{equation}
and
\begin{equation}\label{eq:|qn|-rect}
\|q_{n-1}\|_{L^2(G)}<\sqrt{\frac{\pi}{n+1}},\quad
\|H_n\|_{L^2(\Omega)}<\sqrt{\frac{\pi}{n+1}},\quad n\in\mathbb{N},
\end{equation}
provided that $\Gamma$ is rectifiable.
The inequality in (\ref{eq:|Gn|-rect}) is sharp, as the case $G\equiv\mathbb{D}$ shows.
Furthermore, Lemma~\ref{lem-usefull-2} implies that $\beta_n$ and $\varepsilon_n$ vanish simultaneously
if $G$ is a disk.
\end{remark}

\subsection{Results for quasiconformal boundary}\label{sec2:qc}
For the next three results we need additional assumptions on $\Gamma$. Their respective proofs
are given in Section~\ref{proofs-qc}. The first of them is essential in the proof of Theorem~\ref{thm:betan} below,
and is of independent interest, in the sense it provides an estimate for the integral on $\Gamma$ of the product of two
functions (one of them defined on $\overline{G}$ and the other on $\overline{\Omega}$) in terms of associated
$L^2$-norms in $G$ and $\Omega$.

\begin{lemma}\label{lem:useful-1}
Assume that $\Gamma$ is quasiconformal and rectifiable. Then, for any $f$ analytic in $G$, continuous on
$\overline{G}$ and $g$ analytic in $\Omega$, continuous on $\overline{\Omega}$, with $g^\prime\in L^2(\Omega)$,
there holds that
\begin{equation}\label{eq:useful-1}
\left|\frac{1}{2i}\int_\Gamma f(z)\overline{g(z)}dz \right|\le
\frac{k}{\sqrt{1-k^2}}\,\|f\|_{L^2(G)}\|g^\prime\|_{L^2(\Omega)},
\end{equation}
where $k$ is the reflection factor of $\Gamma$ defined in (\ref{eq:refcoef}).
\end{lemma}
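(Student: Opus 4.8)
The plan is to use the quasiconformal reflection $y$ across $\Gamma$ to transplant the conjugate factor $\overline{g}$ from $\Gamma$ into $\overline{G}$, to convert the line integral into an area integral over $G$ by Green's formula, and then to estimate by the Cauchy--Schwarz inequality together with the dilatation bound for $y$ and a change of variables back onto $\Omega$.

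First I would use that, by the properties (A1)--(A3) of Remark~\ref{lem:prop-qc}, the reflection $y$ fixes $\Gamma$ pointwise, maps $\overline{G}$ onto $\overline{\Omega}$, is sense-reversing and $K$-quasiconformal, and satisfies $|y_z(z)|\le k\,|y_{\bar z}(z)|$ for a.e.\ $z$, where $y_z,y_{\bar z}$ are the formal derivatives of $y$ and $k$ is as in \eqref{eq:refcoef}. Set
\[
\widetilde g(z):=\overline{g(y(z))},\qquad z\in\overline{G}.
\]
Then $\widetilde g$ is continuous on $\overline{G}$ and coincides with $\overline{g}$ on $\Gamma$. Since $g$ is holomorphic and $y\in W^{1,2}_{\mathrm{loc}}$, the chain rule gives, a.e.\ in $G$,
\[
\frac{\partial\widetilde g}{\partial\bar z}=\overline{g'(y(z))}\;\overline{y_z(z)} .
\]
Granting Green's formula $\int_{\partial G}F\,dz=2i\int_G(\partial F/\partial\bar z)\,dA$ for $F:=f\,\widetilde g$ (here $\partial F/\partial\bar z=f\,\partial\widetilde g/\partial\bar z$, because $f$ is holomorphic), and replacing $\widetilde g$ by $\overline{g}$ on $\Gamma$, one obtains
\[
\frac{1}{2i}\int_\Gamma f(z)\,\overline{g(z)}\,dz=\int_G f(z)\,\overline{g'(y(z))}\;\overline{y_z(z)}\,dA(z) .
\]

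Next, Cauchy--Schwarz bounds the modulus of the right-hand side by $\|f\|_{L^2(G)}$ times $\bigl(\int_G|g'(y(z))|^2|y_z(z)|^2\,dA(z)\bigr)^{1/2}$. In the last integral I would substitute $w=y(z)$; since $y$ is sense-reversing with $y(G)=\Omega$, its Jacobian equals $|y_{\bar z}|^2-|y_z|^2>0$ a.e., so
\[
\int_G|g'(y(z))|^2|y_z(z)|^2\,dA(z)=\int_\Omega|g'(w)|^2\,\frac{|y_z(\zeta)|^2}{|y_{\bar z}(\zeta)|^2-|y_z(\zeta)|^2}\,dA(w),\qquad\zeta:=y^{-1}(w) .
\]
By the dilatation bound $|y_z|\le k\,|y_{\bar z}|$ and the fact that $t\mapsto t^2/(1-t^2)$ is increasing on $[0,1)$, the fraction is $\le k^2/(1-k^2)$, whence the integral is $\le\frac{k^2}{1-k^2}\|g'\|_{L^2(\Omega)}^2$. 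Combining the two estimates yields \eqref{eq:useful-1}. (The same substitution applied with $y_{\bar z}$ in place of $y_z$ shows $\partial\widetilde g/\partial z\in L^2(G)$ as well, so $\widetilde g\in W^{1,2}(G)$ and, $f$ being bounded on $\overline{G}$, $F\in W^{1,2}(G)$ with $\partial F/\partial\bar z\in L^1(G)$.)

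The main obstacle is the justification of Green's formula in the first step: $\Gamma$ is only rectifiable and $y$ only $W^{1,2}_{\mathrm{loc}}$, so $F$ is of Sobolev class but in general not $C^1(\overline{G})$. I would handle this by exhausting $G$ from inside by the analytic Jordan domains $G_\rho:=\psi(\{|w|<\rho\})$, $\rho\uparrow1$, where $\psi$ maps $\mathbb{D}$ conformally onto $G$; on each $G_\rho$ Green's formula is classical for $F\in W^{1,1}(G_\rho)\cap C(\overline{G_\rho})$. Letting $\rho\to1$, the area integrals converge by dominated convergence since $\partial F/\partial\bar z\in L^1(G)$, while
\[
\int_{\partial G_\rho}F\,dz=\int_{|w|=\rho}F(\psi(w))\,\psi'(w)\,dw\;\longrightarrow\;\int_\Gamma F\,dz ,
\]
because $F\circ\psi\in C(\overline{\mathbb{D}})$ and $\psi'\in H^1(\mathbb{D})$ --- this last fact being precisely where rectifiability of $\Gamma$ enters --- so the boundary integrands converge in $L^1(\partial\mathbb{D})$. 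Everything else in the argument is the chain rule, the Cauchy--Schwarz inequality, and the elementary monotonicity of $t^2/(1-t^2)$.
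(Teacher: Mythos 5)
Your proposal is correct and follows essentially the same route as the paper: reflect $\overline{g}$ into $G$ via $y$, apply Green's formula and the chain rule to get $\frac{1}{2i}\int_\Gamma f\overline{g}\,dz=\int_G f\,\overline{g'(y(z))}\,\overline{y_z}\,dA$, then use Cauchy--Schwarz together with the dilatation bound $|y_z|\le k|y_{\bar z}|$ and the change of variables $\zeta=y(z)$ to bound the second factor by $\frac{k}{\sqrt{1-k^2}}\|g'\|_{L^2(\Omega)}$. The only difference is cosmetic (you apply the dilatation bound after the substitution rather than before, via the Jacobian inequality $|y_z|^2\le\frac{-k^2}{1-k^2}J(y)$), and your exhaustion argument justifying Green's formula on a merely rectifiable boundary is a welcome elaboration of a step the paper leaves implicit.
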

It is readily verified that in the case when $\Gamma$ is a circle then both sides of (\ref{eq:useful-1}) vanish.

The second result shows that the sequence $\{\beta_n\}$ is dominated by the sequence
$\{\varepsilon_n\}$. Note, in particular, that $\beta_n$, $\varepsilon_n$ and $k$ vanish simultaneously
if $\Gamma$ is a circle.
\begin{theorem}\label{thm:betan}
Assume that $\Gamma$ is quasiconformal and rectifiable. Then, for any $n\in\mathbb{N}$, it holds that
\begin{equation}\label{eq:betan}
0\le\beta_n\le \frac{k^2}{1-k^2}\,\,\varepsilon_{n},
\end{equation}
where $k$ is the reflection factor of $\Gamma$.
\end{theorem}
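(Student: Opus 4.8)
The statement is a short corollary obtained by feeding the line-integral representation of $\beta_n$ from Lemma~\ref{lem-usefull-2} into the estimate of Lemma~\ref{lem:useful-1}. The plan is as follows. Since $\beta_n\ge 0$ is immediate from the definition (\ref{eq:betandef}), only the upper bound requires argument. First I would start from
\[
\beta_n=\frac{1}{2\pi i}\int_\Gamma q_{n-1}(z)\overline{E_{n+1}(z)}\,dz
=\frac{1}{\pi}\cdot\frac{1}{2i}\int_\Gamma q_{n-1}(z)\overline{E_{n+1}(z)}\,dz
\]
and apply Lemma~\ref{lem:useful-1} with $f=q_{n-1}$ and $g=E_{n+1}$.

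Next I would check that the hypotheses of Lemma~\ref{lem:useful-1} are met: $q_{n-1}$ is a polynomial, hence analytic in $G$ and continuous on $\overline{G}$; $E_{n+1}$ is analytic in $\Omega$ (including at $\infty$, where it vanishes) and continuous on $\overline{\Omega}$, exactly as already noted in the proof of Lemma~\ref{lem-usefull-2}; and by (\ref{eq:GnFn+1}) together with Lemma~\ref{lem:Hn-in-L2} we have $E_{n+1}^\prime=(n+1)H_n\in L^2(\Omega)$. Lemma~\ref{lem:useful-1} then gives
\[
\beta_n\le \frac{1}{\pi}\,\frac{k}{\sqrt{1-k^2}}\,\|q_{n-1}\|_{L^2(G)}\,\|E_{n+1}^\prime\|_{L^2(\Omega)}
=\frac{n+1}{\pi}\,\frac{k}{\sqrt{1-k^2}}\,\|q_{n-1}\|_{L^2(G)}\,\|H_n\|_{L^2(\Omega)} .
\]

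Finally I would substitute $\|q_{n-1}\|_{L^2(G)}=\sqrt{\pi\beta_n/(n+1)}$ and $\|H_n\|_{L^2(\Omega)}=\sqrt{\pi\varepsilon_n/(n+1)}$ from the definitions (\ref{eq:betandef}) and (\ref{eq:epsndef}); the powers of $\pi$ and of $n+1$ cancel, leaving
\[
\beta_n\le \frac{k}{\sqrt{1-k^2}}\,\sqrt{\beta_n\,\varepsilon_n}.
\]
If $\beta_n=0$ the claim is trivial; otherwise dividing by $\sqrt{\beta_n}$ and squaring yields $\beta_n\le \dfrac{k^2}{1-k^2}\,\varepsilon_n$, as asserted.

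There is no real obstacle here, as the analytic substance has been isolated into Lemmas~\ref{lem:useful-1} and~\ref{lem-usefull-2}. The only points worth a moment's care are the verification that $E_{n+1}^\prime\in L^2(\Omega)$ — which is precisely Lemma~\ref{lem:Hn-in-L2} rephrased through $H_n=E_{n+1}^\prime/(n+1)$ — and the bookkeeping of the normalizing constants, arranged so that the $(n+1)/\pi$ factors in $\beta_n$ and $\varepsilon_n$ disappear at the end; this is exactly why those factors were built into the definitions of $\beta_n$ and $\varepsilon_n$.
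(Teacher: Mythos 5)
Your proposal is correct and follows essentially the same route as the paper: both apply Lemma~\ref{lem:useful-1} with $f=q_{n-1}$ and $g=E_{n+1}$ to the representation (\ref{eq:betanEn}), use $E_{n+1}^\prime=(n+1)H_n\in L^2(\Omega)$ via Lemma~\ref{lem:Hn-in-L2} and (\ref{eq:GnFn+1}), and then cancel a factor of $\beta_n$ (the paper squares first and divides, you divide by $\sqrt{\beta_n}$ and square, which is the same computation). Your explicit handling of the trivial case $\beta_n=0$ is a minor, welcome refinement.
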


The third result relates the decay of $\{\varepsilon_n\}$ to that of the coefficients of the exterior conformal
map $\Psi$ and is essential in the proof of Theorem~\ref{thm:alphange}.
\begin{theorem}\label{lem:epsnge}
Assume that $\Gamma$ is quasiconformal. Then, for any $n\in\mathbb{N}$, it holds that
\begin{equation}\label{eq:epsge}
\varepsilon_{n}\ge\,\frac{\pi\,(1-k^2)}{A(G)}\,(n+1)\,|b_{n+1}|^2,
\end{equation}
where $A(G)$ denotes the area of $G$ and $k$ is the reflection factor of $\Gamma$.
\end{theorem}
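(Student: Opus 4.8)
The plan is to recast $\varepsilon_n = \frac{n+1}{\pi}\|H_n\|^2_{L^2(\Omega)}$ in a way that isolates a single Fourier coefficient and then throw away everything except one term. The starting observation is the line-integral representation $\varepsilon_n = -\frac{1}{2\pi i}\int_\Gamma H_n(z)\overline{E_{n+1}(z)}\,dz$ from Lemma~\ref{lem-usefull-2}, together with the defining relation $\Phi^n\Phi' = G_n - H_n$, which lets us substitute $H_n(z) = G_n(z) - \Phi^n(z)\Phi'(z)$ inside the integral. Since $G_n$ is a polynomial and $E_{n+1}$ has only negative powers of $z$ near infinity, the contribution of $G_n$ vanishes by the residue theorem (exactly as in the proof of Lemma~\ref{lem-usefull-2}), so $\varepsilon_n = \frac{1}{2\pi i}\int_\Gamma \Phi^n(z)\Phi'(z)\overline{E_{n+1}(z)}\,dz$. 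Then, since $E_{n+1}(z) = F_{n+1}(z) - \Phi^{n+1}(z)$ and the $\Phi^{n+1}$ part again integrates to $\delta$ against $\Phi^n\Phi'$ (Lemma~\ref{lem:usefull-1}, identity (\ref{eq:PhimPhin}) with $m=n$), we may equally write $\varepsilon_n$ in terms of $\overline{F_{n+1}}$; the point of these manipulations is to get to an integral over $\partial\mathbb{D}$ via $w = \Phi(z)$.

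The main step is a change of variables. Writing $z = \Psi(w)$, $|w|=1$, the quantity $\Phi^n(z)\Phi'(z)\,dz$ becomes $w^n\,dw$, so one lands on an expression of the form $\frac{1}{2\pi i}\int_{|w|=1} w^n\,\overline{E_{n+1}(\Psi(w))}\,dw$, or, after using $\overline{w}=1/w$ on the unit circle, an honest Fourier coefficient of the boundary function $E_{n+1}\circ\Psi$. But $E_{n+1} = F_{n+1} - \Phi^{n+1}$, and on $\partial\mathbb{D}$ we have $\Phi^{n+1}(\Psi(w)) = w^{n+1}$, while $F_{n+1}(\Psi(w))$ is the boundary value of a function analytic \emph{in} $G$ (the side $\{|w|<1\}$ of the curve under the reflected chart), hence contributes nothing to the positive-index Fourier coefficients in the relevant direction. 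Tracking the one surviving term, the coefficient of $1/w^{n+1}$ in the Laurent expansion of $\Psi$, which is precisely $b_{n+1}$, I expect to arrive at an \emph{identity} of the shape $\varepsilon_n = (n+1)|b_{n+1}|^2 + (\text{nonnegative remainder})$, up to the correct normalization. Comparing the normalizing constant $\pi/(n+1)$ built into $\varepsilon_n$ against the area normalization forces the factor $\pi/A(G)$; and the factor $(1-k^2)$ will enter because the honest identity involves $\|H_n\|^2$ on all of $\Omega$, whereas the clean Fourier term only captures a piece — here one invokes the quasiconformal reflection machinery (properties (A1)--(A3) of Remark~\ref{lem:prop-qc}) and an inequality of the Lemma~\ref{lem:useful-1} type, which is exactly where the reflection factor $k$ is produced, to bound the comparison between the Euclidean $L^2(\Omega)$-norm and the flat $\partial\mathbb{D}$-norm from below by $(1-k^2)$ times the latter.

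An alternative, and possibly cleaner, route to the same estimate: observe that $H_n \in L^2_a(\Omega)$ has a Laurent expansion $H_n(z) = a^{(n)}_2/z^2 + a^{(n)}_3/z^3 + \cdots$, and pair $H_n$ against the single function $\overline{z}$ — or rather against the antiderivative structure — exploiting that $\langle H_n, \cdot\rangle_\Omega$ sees, via Green's formula, a boundary pairing that by (\ref{eq:HnIntRep}) picks out the Faber data. The cleanest formulation: $\|H_n\|_{L^2(\Omega)} \ge$ (projection of $H_n$ onto a well-chosen unit vector in $L^2_a(\Omega)$ built from $\Psi$), and that projection computes explicitly to a multiple of $b_{n+1}$. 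I would use whichever of these is shortest, but both reduce to the same arithmetic. The hard part, and the only place genuine work beyond bookkeeping is needed, is controlling the distortion between the $L^2(\Omega)$ geometry and the model geometry on $\Delta$: that is where quasiconformality is indispensable and where the sharp constant $1-k^2$ must be extracted, presumably via the same reflection-and-Cauchy-Schwarz argument underpinning Lemma~\ref{lem:useful-1}. Everything else — the residue cancellations, the change of variables $w=\Phi(z)$, the identification of $b_{n+1}$ as the relevant Fourier coefficient — is routine given the identities already assembled in Section~\ref{sec:faber}.
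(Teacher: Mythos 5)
Your proposal does not reach the proof: it contains one outright error and leaves the decisive step as a conjecture. The error is in your first route. After substituting $H_n=G_n-\Phi^n\Phi'$ into (\ref{eq:epsnEn}) you discard the $G_n$-term ``by the residue theorem''. That step is invalid because the integrand contains $\overline{E_{n+1}(z)}$, which on $\Gamma$ is \emph{not} the boundary value of a function analytic in $\Omega$ (unlike $\overline{\Phi^{n+1}}=1/\Phi^{n+1}$, which is what makes the analogous cancellation work in Lemma~\ref{lem-usefull-2}). Worse, the term you keep, $\frac{1}{2\pi i}\int_\Gamma\Phi^n(z)\Phi'(z)\overline{E_{n+1}(z)}\,dz$, is precisely the one that vanishes, by the second identity in (\ref{eq:PhiEmHn}); so your manipulations, taken literally, would prove $\varepsilon_n=0$. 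It is the $G_n$-term that carries all of $\varepsilon_n$. A second structural problem: your route runs contour integrals over $\Gamma$ involving $\Phi'(\zeta)\,d\zeta$ and boundary values of $\overline{E_{n+1}}$, but the theorem assumes only that $\Gamma$ is quasiconformal --- it need not be rectifiable --- so this machinery (which in Section~\ref{sec2:rect} is set up under rectifiability) is not available here.

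The part you defer (``presumably via the same reflection-and-Cauchy--Schwarz argument'') is in fact the entire proof, and it is more concrete than either of your routes. By residues on a level curve $L_R$ one checks $c_1^{(n+1)}=(n+1)b_{n+1}$, hence by (\ref{eq:GnFn+1}) the coefficient of $1/z^2$ in (\ref{eq:Hndef}) is $a_2^{(n)}=-b_{n+1}$, so that $-b_{n+1}=\frac{1}{2\pi i}\int_{L_R}H_n(z)\,z\,dz$. Pushing to $\Gamma$, replacing the factor $z$ by $y(z)$ (property (A3)) and applying Green's formula in $\Omega$ gives $b_{n+1}=\frac{1}{\pi}\int_\Omega H_n(z)\,y_{\overline z}\,dA(z)$. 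Cauchy--Schwarz together with $\int_\Omega|y_{\overline z}|^2\,dA\le A(G)/(1-k^2)$ --- the first inequality in (\ref{eq:intyzJ}), which is where both $A(G)$ and $1-k^2$ actually enter --- yields $|b_{n+1}|\le\pi^{-1}\|H_n\|_{L^2(\Omega)}\bigl(A(G)/(1-k^2)\bigr)^{1/2}$, i.e.\ (\ref{eq:epsge}). Your second route (bound $\|H_n\|_{L^2(\Omega)}$ below by a projection) points in this direction, but the pairing is against the non-analytic function $y_{\overline z}$, not a unit vector in $L^2_a(\Omega)$ built from $\Psi$, and there is no identity of the form $\varepsilon_n=(n+1)|b_{n+1}|^2+(\text{nonnegative remainder})$ --- only the inequality with the constant $\pi(1-k^2)/A(G)$.
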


\subsection{Results for piecewise analytic boundary}\label{sec2:pw-analytic}
The next two theorems are established for $\Gamma$ \textit{piecewise analytic without cusps}. This means that
$\Gamma$ consists of a finite number of analytic arcs, say $N$, that meet at corner points $z_j$, $j=1,\ldots,N$, where
they form exterior angles $\omega_j\pi$, with $0<\omega_j<2$. The proofs of these theorems are given in
Section~\ref{proofs-pa}.

The relation (\ref{eq:pnPhinPhip}) reveals that in order to derive the strong asymptotics for $p_n(z)$ in $\Omega$,
we need suitable estimates for $q_{n-1}(z)$ and $H_n(z)$ there. For $q_{n-1}(z)$ this is provided by
Corollary~\ref{lem:PolyLemma} below. Regarding $H_n(z)$, we can use the estimate (\ref{eq:Hn-decay-rect}), which is valid
for $\Gamma$ rectifiable.
However, under the current assumption on $\Gamma$ more can be obtained.
\begin{theorem}\label{thm:HnOmgae}
Assume that $\Gamma$ is piecewise analytic without cusps. Then, for any $n\in\mathbb{N}$, it holds that
\begin{equation}\label{eq:HnOmega}
|H_n(z)|\le\frac{c_2(\Gamma)}{\dist(z,\Gamma)}\,\frac{1}{n},\quad z\in\Omega,
\end{equation}
where $c_2(\Gamma)$ depends on $\Gamma$ only.
\end{theorem}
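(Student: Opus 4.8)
The plan is to sharpen the crude bound \eqref{eq:Hn-decay-rect} by exploiting the explicit structure of $\Phi$ near a piecewise‑analytic corner. Starting point is the Cauchy representation \eqref{eq:HnIntRep},
\[
H_n(z)=\frac{1}{2\pi i}\int_\Gamma\frac{\Phi^n(\zeta)\Phi'(\zeta)}{\zeta-z}\,d\zeta,\qquad z\in\Omega,
\]
where $|\Phi(\zeta)|=1$ on $\Gamma$, so that $|\Phi^n(\zeta)\Phi'(\zeta)|=|\Phi'(\zeta)|$. The point is that one should \emph{not} simply pull out $|\Phi'|$ and use $\Phi'\in E^1(\Omega)$; the factor $\Phi^n(\zeta)$ is a unimodular oscillation of frequency $n$ along $\Gamma$, and integrating against it gains a factor $1/n$ — except near the corners, where $\Phi'$ is unbounded and the oscillation degenerates. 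So the strategy is an integration‑by‑parts (or stationary‑phase‑type) argument along $\Gamma$, carried out away from the corners, combined with a direct estimate of the contributions of small arcs around each $z_j$.

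First I would localize: write $\Gamma=\bigcup_{j=1}^N\Gamma_j$ for the analytic arcs, and on each $\Gamma_j$ parametrize by $w=\Phi(\zeta)\in\partial\mathbb{D}$, i.e. $\zeta=\Psi(w)$, turning the integral into $\frac{1}{2\pi i}\int_{I_j}\frac{w^n}{\Psi(w)-z}\,dw$ over an arc $I_j\subset\partial\mathbb{D}$. Here the known boundary behaviour of $\Psi$ at a corner of exterior angle $\omega_j\pi$ is $\Psi(w)-z_j\sim c\,(w-w_j)^{\omega_j}$, so $\Psi$ is Hölder‑continuous up to $\partial\mathbb{D}$ and $\Psi'(w)\sim c\,(w-w_j)^{\omega_j-1}$, which is integrable on $\partial\mathbb{D}$ precisely because $\omega_j<2$ (no cusps). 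Away from the endpoints $w_j$ the integrand $w^n/(\Psi(w)-z)$ is smooth in $w$, and on the unit circle $w^n=\frac{1}{n+1}\frac{d}{d\theta}\bigl(\text{something}\bigr)$ up to harmless factors; more cleanly, writing $w=e^{i\theta}$ and integrating by parts in $\theta$ once transfers the $n$ onto a $1/n$ and onto a derivative of $1/(\Psi(w)-z)$ in $\theta$, producing $\Psi'(w)/(\Psi(w)-z)^2$. The boundary terms of the integration by parts at the arc endpoints $w_j$ are the delicate ones. So the main steps are: (i) split $I_j$ into a central part and two end‑intervals of length $\sim 1/n$ about each corner $w_j$; (ii) on the central part integrate by parts, bounding the result by $\frac{c}{n}\int|\Psi'(w)|\,|dw|/\dist(z,\Gamma)^2$ — but one needs $\dist(z,\Gamma)$, not its square, in the final bound, so in fact only \emph{one} factor of $1/\dist(z,\Gamma)$ may be spent, and the second must be absorbed using $|\Psi(w)-z|\ge \dist(z,\Gamma)$ on part of the range while retaining integrability elsewhere; (iii) on the two short end‑intervals, estimate directly using $|w^n|=1$, $|\Psi(w)-z|\ge\dist(z,\Gamma)$, and $\int_{|w-w_j|<1/n}|\Psi'(w)|\,|dw|\le c\,n^{-\omega_j}$, which is $\le c/n$ since $\omega_j>0$; actually one wants to be slightly more careful and note the end‑interval contribution is $O(n^{-\omega_j})=O(n^{-1})$ only when $\omega_j\ge1$, so for $\omega_j<1$ one should instead choose the end‑interval length to optimize, balancing $n^{-\omega_j}\cdot(\text{short length})^{?}$ against the integration‑by‑parts error. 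Summing the $N$ corners gives the constant $c_2(\Gamma)$.

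The hard part will be step (ii)–(iii): making the integration‑by‑parts error and the end‑interval error simultaneously $O(1/n)$ with only a single power of $1/\dist(z,\Gamma)$, uniformly in $z\in\Omega$ (including $z$ close to a corner, where $\dist(z,\Gamma)$ is small and the corner singularity of $\Psi'$ is nearby). I expect the clean way to handle this is to split the range of $w$ according to whether $|\Psi(w)-z|$ is comparable to $\dist(z,\Gamma)$ or much larger: on the "far" part one genuinely gains $\dist(z,\Gamma)^{-2}$ but the arclength of that part is $\lesssim\dist(z,\Gamma)$ by the quasiconformality (or local bi‑Lipschitz‑up‑to‑power structure) of $\Gamma$, recovering the single power; on the "near" part one uses that $|\Psi(w)-z|\gtrsim|w-w_0|^{\max(\omega_j,1)}$ about the nearest boundary point $\Psi(w_0)$ and integrates the resulting power of $|w-w_0|$ against $|\Psi'(w)|\sim|w-w_j|^{\omega_j-1}$, which converges and yields the $1/n$ from the oscillation after one integration by parts. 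Alternatively — and this may be the route the author takes — one bypasses the delicacy by using \eqref{eq:GnFn+1}, $H_n=E_{n+1}'/(n+1)$, together with an independent estimate $|E_{n+1}(z)|\le c(\Gamma)\,\dist(z,\Gamma)^{-1}$ valid uniformly (which itself follows from a contour‑integral/quasiconformal‑reflection argument for the Faber singular part under the no‑cusp hypothesis), and then differentiates; but turning an $L^\infty$ bound on $E_{n+1}$ into one on its derivative costs a factor from Cauchy's estimate, namely $1/\dist(z,\Gamma)$, which again lands on the wrong power unless $E_{n+1}$ is controlled on a $\dist(z,\Gamma)$‑neighbourhood — so this route too reduces to the same localized analysis near the corners. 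Either way, the no‑cusp condition $\omega_j<2$ is used exactly to keep $\Psi'\in L^1(\partial\mathbb{D})$, and the gain of $1/n$ is the oscillation of $\Phi^n$ along the analytic arcs.
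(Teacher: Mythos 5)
Your route is genuinely different from the paper's, and as written it has an acknowledged gap at its crux. The paper does \emph{not} stay on $\Gamma$ and exploit the oscillation of $\Phi^n$; instead it uses Schwarz reflection to continue $\Phi$ across each analytic arc into $G$, deforms the contour in \eqref{eq:HnIntRep} to a piecewise analytic curve $\Gamma'\subset \overline{G}$ meeting $\Gamma$ only at the corners $z_j$, and splits $\Gamma'$ into short segments $l_j^i$ emanating from the corners plus a compact remainder $\tau\subset G$. On $\tau$ one has $|\Phi|^n=O(\rho^n)$; on $l_j^i$ Lehman's expansions (Remark~\ref{rem:Lehman}) give $|\Phi(\zeta)|\le \exp(-c s^{1/\omega_j})$ and $|\Phi'(\zeta)|\le c\,s^{1/\omega_j-1}$ in the arclength $s$ from $z_j$, and the substitution $u=s^{1/\omega_j}$ yields $\int_0^\infty e^{-cns^{1/\omega_j}}s^{1/\omega_j-1}ds=O(1/n)$ uniformly in $\omega_j$. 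Crucially, since $z\in\Omega$ while the whole deformed contour lies in $\overline{G}$, one has $|\zeta-z|\ge \dist(z,\Gamma)$ for \emph{every} $\zeta$ on the contour, so only a single power of $\dist(z,\Gamma)^{-1}$ ever appears. The gain of $1/n$ thus comes from genuine exponential decay of $|\Phi|^n$ off the corners, not from oscillation, and the entire difficulty you wrestle with in steps (ii)--(iii) never arises.

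The gap in your own route is precisely the point you flag and then leave unresolved: after one integration by parts you face $\frac{1}{n}\int_{\partial\mathbb{D}}|\Psi'(w)|\,|\Psi(w)-z|^{-2}|dw|$ and must extract $\dist(z,\Gamma)^{-1}$ rather than $\dist(z,\Gamma)^{-2}$. Your heuristic for doing so is stated backwards (it is the \emph{near} part of $\Gamma$, where $|\zeta-z|\asymp\dist(z,\Gamma)$, whose arclength is $\lesssim\dist(z,\Gamma)$; the far part has large arclength and large denominator), and your end-interval bound $n^{-\omega_j}$ beats $n^{-1}$ only when $\omega_j\ge1$, as you note without resolving. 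The clean way to close your argument is to skip the local splitting entirely: by \eqref{eq:GnFn+1} and differentiation of \eqref{eq:EnIntRep} under the integral, $H_n(z)=\frac{1}{2\pi i(n+1)}\int_\Gamma\Phi^{n+1}(\zeta)(\zeta-z)^{-2}d\zeta$ with $|\Phi|=1$ a.e.\ on $\Gamma$, so $|H_n(z)|\le\frac{1}{2\pi(n+1)}\int_\Gamma|\zeta-z|^{-2}|d\zeta|$; since a piecewise analytic curve without cusps is Ahlfors regular ($\mathrm{length}(\Gamma\cap D(z_0,r))\le Cr$), a dyadic decomposition of $\Gamma$ into the shells $2^kd\le|\zeta-z|<2^{k+1}d$, $d=\dist(z,\Gamma)$, gives $\int_\Gamma|\zeta-z|^{-2}|d\zeta|\le C/d$. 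With that lemma supplied, your approach works and is arguably shorter than the paper's; without it, the proposal does not yet constitute a proof.
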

Regarding the $L^2$-norm of $H_n$ we have the following estimate; cf.\ (\ref{eq:|qn|-rect}).
\begin{theorem}\label{thm:epsn}
Assume that $\Gamma$ is piecewise analytic without cusps. Then, for any $n\in\mathbb{N}$, it holds that
\begin{equation}\label{eq:epsn}
\|H_n\|_{L^2(\Omega)}  \le c_3(\Gamma)\,\frac{1}{n},
\end{equation}
where $c_3(\Gamma)$ depends on $\Gamma$ only.
\end{theorem}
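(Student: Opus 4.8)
The plan is to estimate $\varepsilon_n = \tfrac{n+1}{\pi}\|H_n\|_{L^2(\Omega)}^2$ by splitting $\Omega$ into a part near $\Gamma$ and a part away from it. Near $\Gamma$ we will use the pointwise bound of Theorem~\ref{thm:HnOmgae}, namely $|H_n(z)|\le c_2(\Gamma)/(n\,\dist(z,\Gamma))$, while far from $\Gamma$ the function $H_n$ is bounded and decays like a Laurent tail, so integrability there is unproblematic. First I would fix a number $R>1$ and write $\Omega = D_R \cup \Omega_R$, where $D_R$ is the doubly-connected domain between $\Gamma$ and the level curve $L_R=\{|\Phi(z)|=R\}$, and $\Omega_R=\{z:|\Phi(z)|>R\}$. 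On $\Omega_R$ the bound from Theorem~\ref{thm:HnOmgae} (or even the cruder $|H_n(z)|\le c_1(\Gamma)/\dist(z,\Gamma)$ of \eqref{eq:Hn-decay-rect}, times an extra $1/n$) gives $\dist(z,\Gamma)$ bounded below by a constant depending on $R$, so
\begin{equation*}
\int_{\Omega_R}|H_n(z)|^2\,dA(z)\le \frac{c(\Gamma,R)}{n^2}\,\int_{\Omega_R}\frac{dA(z)}{\dist(z,\Gamma)^2}\le \frac{c(\Gamma,R)}{n^2},
\end{equation*}
provided the last integral converges, which it does because on $\Omega_R$ the distance is comparable to $|z|$ for large $|z|$ and bounded below otherwise.

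The main work is the estimate on the collar $D_R$. Here the difficulty is that $1/\dist(z,\Gamma)^2$ is \emph{not} area-integrable up to $\Gamma$ in general (it would contribute a logarithmic divergence for a smooth boundary, and worse near corners), so the pointwise bound of Theorem~\ref{thm:HnOmgae} alone is not enough: the $1/n$ prefactor must be spent carefully. The plan is to pass to the $w$-plane via $z=\Psi(w)$, $1<|w|<R$, so that
\begin{equation*}
\int_{D_R}|H_n(z)|^2\,dA(z)=\int_{1<|w|<R}\bigl|H_n(\Psi(w))\bigr|^2\,|\Psi'(w)|^2\,dA(w),
\end{equation*}
and then use the piecewise-analytic structure of $\Gamma$: away from the finitely many corner images, $\Psi$ extends analytically across $|w|=1$ with bounded derivative, so there $H_n(\Psi(w))$ is controlled and the $1/n^2$ factor survives after integration over the (fixed-width) annulus. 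Near each corner image $w_j=\Phi(z_j)$, we use the known local behavior $|\Psi'(w)|\asymp |w-w_j|^{\omega_j-1}$ together with the Theorem~\ref{thm:HnOmgae} bound $|H_n(\Psi(w))|\le c_2/(n\,\dist(\Psi(w),\Gamma))$ and the comparison $\dist(\Psi(w),\Gamma)\gtrsim (|w|-1)\,|\Psi'(w)|$ coming from \eqref{eq:distortion}. This converts the local contribution into an integral of the form $\int (|w|-1)^{-2}\,dA(w)$ over a truncated sector, which diverges — so the crude use of Theorem~\ref{thm:HnOmgae} is still too lossy near corners, and one must instead exploit cancellation in $H_n$.

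Consequently the genuinely hard step, and the one I expect to be the crux, is to improve the near-boundary estimate beyond the pointwise bound: one should go back to the Cauchy integral representation \eqref{eq:HnIntRep} of $H_n$, or better, use the identity \eqref{eq:epsnEn}, $\varepsilon_n=-\tfrac{1}{2\pi i}\int_\Gamma H_n(z)\overline{E_{n+1}(z)}\,dz$, and bound this line integral directly. Since $E_{n+1}=\Phi^{n+1}-F_{n+1}$ restricted to $\Gamma$ equals $1/w^{n+1}$-type behavior under $w=\Phi(z)$ up to the polynomial correction, and $|E_{n+1}|$ on $\Gamma$ should itself be $O(1/n)$ in an appropriate integrated sense (this is where the piecewise-analytic, no-cusp hypothesis enters, via the same local corner analysis used for $H_n$ in the proof of Theorem~\ref{thm:HnOmgae}), the product is $O(1/n^2)$ after integration against $|dz|$ — giving exactly $\varepsilon_n = O(1/n^2)$, hence $\|H_n\|_{L^2(\Omega)}\le c_3(\Gamma)/n$ as claimed. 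In short: the bulk estimates are routine; the delicate point is the corner analysis showing that the pairing $\int_\Gamma H_n\,\overline{E_{n+1}}$ gains a full factor $1/n^2$, for which one feeds the piecewise-analytic geometry into sharp estimates for both $H_n$ and $E_{n+1}$ near each vertex $z_j$, as developed in Section~\ref{proofs-pa}.
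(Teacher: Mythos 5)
You have correctly located the crux --- squaring and integrating the pointwise bound of Theorem~\ref{thm:HnOmgae} up to $\Gamma$ diverges, so the factor $1/n$ must be spent more carefully near the boundary --- but the route you propose to resolve it does not close, and the estimates it rests on are false. Your final step asserts that $|E_{n+1}|$ is $O(1/n)$ on $\Gamma$ (in an integrated sense) and hence that the pairing in \eqref{eq:epsnEn} is $O(1/n^2)$. Neither claim survives scrutiny. Writing $E_{n+1}$ as a Cauchy integral over the deformed contour $\Gamma^\prime$, which meets $\Gamma$ at the corners $z_j$, one finds that the resulting bound for $|E_{n+1}(z)|$, $z\in\Gamma$, degenerates to $O(1)$ for $|z-z_j|\lesssim n^{-\omega_j}$; there is indeed no reason for $F_{n+1}(z_j)-\Phi^{n+1}(z_j)$ to be small at a corner. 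Likewise $H_n=\Phi^n\Phi^\prime-G_n$ inherits on $\Gamma$ the blow-up of $\Phi^\prime$ at corners with $\omega_j>1$, so it is not uniformly $O(1/n)$ there either. Moreover the conclusion you aim for, $\varepsilon_n=O(1/n^2)$, is strictly stronger than the theorem (which is equivalent to $\varepsilon_n=O(1/n)$), and by \eqref{eq:alphabetaeps} together with Theorem~\ref{thm:betan} it would force $\alpha_n\le\varepsilon_n/(1-k^2)=O(1/n^2)$ --- contradicting the sharpness of the $O(1/n)$ rate that the paper argues for (Table~\ref{tab:an-decay}, Remark~\ref{rem:deltaj}). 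So the identity \eqref{eq:epsnEn} cannot deliver $O(1/n^2)$, and your two-factor mechanism does not deliver even the needed $O(1/n)$.

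What is actually done in Section~\ref{sec:proof-thm:epsn} is a different, $n$-dependent decomposition: not a collar $D_R$ versus its complement, but shrinking corner neighborhoods $\Omega_1^j=\{z\in\Omega:|z-z_j|<\delta_j\}$ with $\delta_j=c\,n^{-\omega_j}$ as in \eqref{eq:deltaj}, plus the remainder $\Omega_2$. On each $\Omega_1^j$ one inserts the contour representation \eqref{eq:Hnparts} and reduces the double integral to the sharp elementary estimate $I(\omega,k)\le c/k^2$ of Lemma~\ref{lem:final}. On $\Omega_2$ --- which still contains points arbitrarily close to the analytic arcs of $\Gamma$ --- one never invokes $\dist(z,\Gamma)^{-1}$: the Cauchy integral for $E_{n+1}^\prime=(n+1)H_n$ over $\Gamma^\prime$ gives $|E_{n+1}^\prime(z)|\le c\sum_j n^{-\omega_j}|z-z_j|^{-2}$, a bound that degenerates only at the corners, whence $\int_{\Omega_2}|E_{n+1}^\prime|^2\,dA=O(1)$ and $J_2(n)=O(1/n^2)$ in \eqref{eq:I1+I2}. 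The choice $\delta_j=c\,n^{-\omega_j}$ is precisely what balances the two contributions. To repair your argument you would need to reproduce this corner-by-corner analysis; the collar decomposition and the line-integral identity by themselves do not supply the missing gain at the vertices.
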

It is interesting to note an uniformity aspect in both the estimates (\ref{eq:HnOmega}) and (\ref{eq:epsn}),
in the sense that the geometry of
$\Gamma$, as it is measured by the values of $\omega_j\pi$, does not influence the way that
$H_n(z)$ and $\|H_n\|_{L^2(\Omega)}$
tend to zero. This is somewhat surprising, when compared with similar results in Approximation Theory for domains with
corners, and it can be attributed to the fact that the effect of $\omega_j$'s \lq\lq cancels out" in the
representation (\ref{eq:HnIntRep}) of $H_n(z)$, see  (\ref{eq:Iji}) and Remark~\ref{rem:deltaj} below.

We conclude this section with a simple consequence of Theorems~\ref{thm:betan} and \ref{thm:epsn}.
\begin{corollary}\label{cor:qn-1L2}
Assume that $\Gamma$ is piecewise analytic without cusps. Then, for any $n\in\mathbb{N}$, there holds that
\begin{equation}\label{eq:en-decay}
0\le\varepsilon_n\le c_4(\Gamma)\,\frac{1}{n},
\end{equation}
and
\begin{equation}\label{eq:qn-1L2}
\|q_{n-1}\|_{L^2(G)}\le c_5(\Gamma)\,\frac{1}{n},
\end{equation}
where $c_4(\Gamma)$ and  $c_5(\Gamma)$ depend on $\Gamma$ only.
\end{corollary}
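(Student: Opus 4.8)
The plan is to read off both estimates directly from Theorems~\ref{thm:betan} and \ref{thm:epsn}, using only the elementary book-keeping of constants together with one structural observation: a piecewise analytic Jordan curve without cusps is, by the characterization recalled in Section~\ref{section:intro}, quasiconformal, and, being a finite union of analytic arcs, it is rectifiable. Hence both hypotheses of Theorem~\ref{thm:betan} are in force under the present assumption on $\Gamma$.

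First I would dispose of (\ref{eq:en-decay}). The nonnegativity of $\varepsilon_n$ is immediate from its definition (\ref{eq:epsndef}), while Theorem~\ref{thm:epsn} gives
\[
\varepsilon_n=\frac{n+1}{\pi}\,\|H_n\|_{L^2(\Omega)}^2\le\frac{n+1}{\pi}\,\frac{c_3(\Gamma)^2}{n^2}\le\frac{2\,c_3(\Gamma)^2}{\pi}\,\frac{1}{n},
\]
where we used $n+1\le 2n$ for $n\in\mathbb{N}$; this is precisely (\ref{eq:en-decay}) with $c_4(\Gamma)=2c_3(\Gamma)^2/\pi$.

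Next, for (\ref{eq:qn-1L2}) I would invoke Theorem~\ref{thm:betan} to obtain $0\le\beta_n\le\frac{k^2}{1-k^2}\,\varepsilon_n$, where $k<1$ is the reflection factor of $\Gamma$, and then combine this with the bound on $\varepsilon_n$ just established and the definition (\ref{eq:betandef}) of $\beta_n$:
\[
\|q_{n-1}\|_{L^2(G)}^2=\frac{\pi}{n+1}\,\beta_n\le\frac{\pi}{n+1}\cdot\frac{k^2}{1-k^2}\cdot\frac{c_4(\Gamma)}{n}\le\frac{\pi\,k^2\,c_4(\Gamma)}{1-k^2}\,\frac{1}{n^2}.
\]
Taking square roots yields (\ref{eq:qn-1L2}) with $c_5(\Gamma)=\bigl(\pi k^2 c_4(\Gamma)/(1-k^2)\bigr)^{1/2}$.

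As this outline makes clear, there is no genuine difficulty: the corollary is a formal consequence of the two cited theorems and a trivial estimate of $\varepsilon_n$ in terms of $\|H_n\|_{L^2(\Omega)}$. The only point deserving a moment's attention is the verification that Theorem~\ref{thm:betan} is applicable here, i.e.\ that a cusp-free piecewise analytic curve is quasiconformal and rectifiable --- and this has already been recorded in the introduction.
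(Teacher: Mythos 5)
Your proposal is correct and follows exactly the route the paper intends: the corollary is stated there as a direct consequence of Theorems~\ref{thm:betan} and \ref{thm:epsn}, obtained by inserting the bound $\|H_n\|_{L^2(\Omega)}\le c_3(\Gamma)/n$ into the definition of $\varepsilon_n$ and then transferring the decay to $\|q_{n-1}\|_{L^2(G)}$ via $\beta_n\le\frac{k^2}{1-k^2}\varepsilon_n$. Your check that a cusp-free piecewise analytic curve is quasiconformal and rectifiable, so that Theorem~\ref{thm:betan} applies, is the right (and only) point needing verification.
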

A comparison between (\ref{eq:qn-1L2}) and (\ref{eq:|qn|-rect}) reveals the gain in the rate of decay of
$\|q_{n-1}\|_{L^2(G)}$ under  the additional assumption the $\Gamma$.

\section{A Polynomial lemma}\label{sec:poly-est}
\setcounter{equation}{0}
In the proof of Theorem~\ref{thm:finepn} we require an estimate for the growth of the polynomial $q_{n-1}(z)$ in
$\Omega$,
in terms of its $L^2$-norm in $G$. This is the purpose of the next lemma, which is of independent interest.
Its own proof is given in Section~\ref{sec:proof-lem:PolyLemma} below.
We use $\mathbb{P}_n$  to denote the space of the polynomials of degree up to $n$.
\begin{lemma}\label{lem:PolyLemma}
Assume that $\Gamma$ is quasiconformal and rectifiable. Then, for any $P\in\mathbb{P}_n$, it holds that
\begin{equation}\label{eq:PolyLemma}
|P(z)|\le\frac{1}{\dist(z,\Gamma)\sqrt{1-k^2}}\,\,\sqrt{\frac{n+1}{\pi}}\,
\|P\|_{L^2(G)}\,|\Phi(z)|^{n+1},\quad z\in\Omega,
\end{equation}
where $k$ is the reflection factor of $\Gamma$.
\end{lemma}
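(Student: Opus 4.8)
The plan is to fix $z\in\Omega$ and estimate $P(z)/\Phi^{n+1}(z)$ through a Cauchy representation on $\Gamma$. Since $P/\Phi^{n+1}$ is analytic in $\Omega$, vanishes at $\infty$, and (because $\Gamma$ is a rectifiable Jordan curve and $|\Phi|\ge1$ on $\overline{\Omega}$) is continuous on $\overline{\Omega}$, applying the residue theorem on the ring domain between $\Gamma$ and a level line $L_R$ and letting $R\to\infty$ gives
\[
\frac{P(z)}{\Phi^{n+1}(z)}=-\frac{1}{2\pi i}\int_\Gamma\frac{P(\zeta)}{\Phi^{n+1}(\zeta)}\,\frac{d\zeta}{\zeta-z}
=-\frac{1}{2\pi i}\int_\Gamma\frac{P(\zeta)\,\overline{\Phi^{n+1}(\zeta)}}{\zeta-z}\,d\zeta ,
\]
where the last step uses $|\Phi(\zeta)|=1$ on $\Gamma$. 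Into this I would insert the conjugate of the splitting (\ref{eq:PhinFn}), namely $\overline{\Phi^{n+1}}=\overline{F_{n+1}}-\overline{E_{n+1}}$ on $\Gamma$, thereby writing $P(z)/\Phi^{n+1}(z)=-I_1+I_2$, where $I_1$ carries $\overline{F_{n+1}}$ and $I_2$ carries $\overline{E_{n+1}}$.

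The two pieces are handled by different tools. For $I_1$, since $1/(\zeta-z)$ is analytic and bounded on $\overline{G}$, Green's formula in $G$ turns the contour integral into the area integral
\[
I_1=\frac{1}{\pi}\int_G\frac{P(\zeta)\,\overline{F_{n+1}'(\zeta)}}{\zeta-z}\,dA(\zeta)
=\frac{n+1}{\pi}\int_G\frac{P(\zeta)\,\overline{G_n(\zeta)}}{\zeta-z}\,dA(\zeta),
\]
the second equality being (\ref{eq:GnFn+1}); then Cauchy--Schwarz in $L^2(G)$ together with $|\zeta-z|\ge\dist(z,\Gamma)$ for $\zeta\in\overline{G}$ yields $|I_1|\le\frac{n+1}{\pi}\,\dist(z,\Gamma)^{-1}\,\|P\|_{L^2(G)}\,\|G_n\|_{L^2(G)}$. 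For $I_2$ I would apply Lemma~\ref{lem:useful-1} with $f(\zeta)=P(\zeta)/(\zeta-z)$ (analytic in $G$, continuous on $\overline{G}$ since $z\in\Omega$) and $g=E_{n+1}$, which is analytic in $\Omega$, continuous on $\overline{\Omega}$, and has $g'=(n+1)H_n\in L_a^2(\Omega)$ by (\ref{eq:GnFn+1}) and Lemma~\ref{lem:Hn-in-L2}; after accounting for the $1/\pi$ between the normalizations, and again using $\|f\|_{L^2(G)}\le\dist(z,\Gamma)^{-1}\|P\|_{L^2(G)}$, this gives $|I_2|\le\frac{n+1}{\pi}\,\frac{k}{\sqrt{1-k^2}}\,\dist(z,\Gamma)^{-1}\,\|P\|_{L^2(G)}\,\|H_n\|_{L^2(\Omega)}$.

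Finally I would combine $|P(z)/\Phi^{n+1}(z)|\le|I_1|+|I_2|$ and apply Cauchy--Schwarz to the two-term sum $\|G_n\|_{L^2(G)}\cdot1+\|H_n\|_{L^2(\Omega)}\cdot\tfrac{k}{\sqrt{1-k^2}}$, which produces the factor $\big(1+\tfrac{k^2}{1-k^2}\big)^{1/2}=(1-k^2)^{-1/2}$ times $\big(\|G_n\|_{L^2(G)}^2+\|H_n\|_{L^2(\Omega)}^2\big)^{1/2}$. The crucial input is the exact identity $\|G_n\|_{L^2(G)}^2+\|H_n\|_{L^2(\Omega)}^2=\pi/(n+1)$, obtained by adding (\ref{eq:Gnepsn}) to the definition (\ref{eq:epsndef}) of $\varepsilon_n$; substituting it, and using $\frac{n+1}{\pi}\sqrt{\pi/(n+1)}=\sqrt{(n+1)/\pi}$, gives $|P(z)/\Phi^{n+1}(z)|\le\dist(z,\Gamma)^{-1}(1-k^2)^{-1/2}\sqrt{(n+1)/\pi}\,\|P\|_{L^2(G)}$, and multiplying by $|\Phi(z)|^{n+1}$ is precisely (\ref{eq:PolyLemma}).

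The obstacle I expect to matter is conceptual rather than technical: one must recognize that $\Phi^{n+1}$ has to be split off its polynomial part $F_{n+1}$ (so that the genuinely ``exterior'' remainder $E_{n+1}$ has $L^2$ derivative, making Lemma~\ref{lem:useful-1} applicable), and that the two resulting contributions must be recombined by Cauchy--Schwarz using the exact Pythagorean identity for $\|G_n\|_{L^2(G)}$ and $\|H_n\|_{L^2(\Omega)}$ — the crude triangle inequality would deliver only the weaker constant $1+k/\sqrt{1-k^2}$, and replacing the identity by the individual bounds $\|G_n\|_{L^2(G)}^2,\|H_n\|_{L^2(\Omega)}^2\le\pi/(n+1)$ would cost a spurious $\sqrt2$. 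The remaining points — validity of the Cauchy representation and of Green's formula for merely rectifiable $\Gamma$ — are routine and are dealt with by exhaustion with the level lines $L_R$.
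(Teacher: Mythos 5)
Your proposal is correct and yields exactly the constant claimed, but it reaches it by a genuinely different route from the paper. The paper also starts from the Cauchy representation of $P(z)/\Phi^{n+1}(z)$ with $g(\zeta)=P(\zeta)/(\zeta-z)$, but it then replaces $\Phi^{n+1}$ by $\Phi^{n+1}\circ y$ (legitimate since $y(\zeta)=\zeta$ on $\Gamma$), applies Green's formula \emph{once} in $G$ to the reflected kernel, and finishes with a single Cauchy--Schwarz: the whole factor $\sqrt{\pi/(n+1)}\,(1-k^2)^{-1/2}$ comes out of the computation
$\int_G |\Phi^\prime(y)|^2|y_{\overline{\zeta}}|^2\,|\Phi^{n+2}(y)|^{-2}\,dA
\le \frac{1}{1-k^2}\int_\Delta |w|^{-2(n+2)}dA(w)=\frac{1}{1-k^2}\frac{\pi}{n+1}$,
i.e.\ from the dilatation bound (\ref{eq:yzJ}) plus the change of variables $t=y(\zeta)$. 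You instead split $\overline{\Phi^{n+1}}=\overline{F_{n+1}}-\overline{E_{n+1}}$ on $\Gamma$, treat the polynomial part by the interior Green's formula (producing $\|G_n\|_{L^2(G)}$) and the singular part by Lemma~\ref{lem:useful-1} (producing $\tfrac{k}{\sqrt{1-k^2}}\|E_{n+1}^\prime\|_{L^2(\Omega)}=\tfrac{k}{\sqrt{1-k^2}}(n+1)\|H_n\|_{L^2(\Omega)}$), and then recombine by Cauchy--Schwarz using the exact identity $\|G_n\|^2_{L^2(G)}+\|H_n\|^2_{L^2(\Omega)}=\pi/(n+1)$, which is indeed available from (\ref{eq:Gnepsn}) and (\ref{eq:epsndef}) under the rectifiability hypothesis. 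All applications are legitimate: $E_{n+1}$ is continuous on $\overline{\Omega}$ with $E_{n+1}^\prime=(n+1)H_n\in L^2(\Omega)$ by Lemma~\ref{lem:Hn-in-L2}, and the paper itself applies Lemma~\ref{lem:useful-1} to $g=E_{n+1}$ in the proof of Theorem~\ref{thm:betan}. What your route buys is a proof assembled entirely from the Section~2 machinery, plus the (correct) observation that the sharp constant is recovered only via the Pythagorean identity rather than the individual bounds (\ref{eq:|Gn|-rect})--(\ref{eq:|qn|-rect}); what the paper's route buys is a shorter, self-contained argument in which the quasiconformal reflection is used directly and no exact identity is needed.
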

Regarding sharpness of the inequality (\ref{eq:PolyLemma}), we note that the order $1/2$ of $n$ cannot be improved in
general, as the the choice $P\equiv p_n$ and the strong asymptotics for smooth $\Gamma$ of Section~\ref{section:intro}
show.
Furthermore, the constant term is asymptotically optimal for $z\to\infty$, as the choice $P(z)=z^n$, with
$G=\mathbb{D}$ (hence $k=0$) shows.

Lemma~\ref{eq:PolyLemma} should be compared with the following well-known result, which gives the growth of a polynomial
in terms of its uniform norm on $\overline{G}$. Hereafter we use $\|\cdot\|_K$ to denote the uniform norm on the set $K$.
\begin{mylemma}[Bernstein-Walsh]\label{lem:B-W}
For any $P\in\mathbb{P}_n$, it holds that
\begin{equation}\label{eq:B-W}
|P(z)|\le\|P\|_{\overline{G}}\,\,|\Phi(z)|^{n},\quad z\in\Omega.
\end{equation}
\end{mylemma}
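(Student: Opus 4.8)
The plan is to reduce the statement to a single application of the maximum modulus principle, to the auxiliary function
$$
f(z):=\frac{P(z)}{\Phi^n(z)},\qquad z\in\Omega .
$$
First I would verify that $f$ is holomorphic on the whole domain $\Omega\subset\overline{\mathbb{C}}$, \emph{including} the point at infinity. Since $|\Phi(z)|>1$ for every $z\in\Omega$, the denominator never vanishes there, so $f$ is holomorphic on $\Omega\setminus\{\infty\}$; and from the normalization (\ref{eq:Phi}) one has $\Phi^n(z)=\gamma^n z^n\bigl(1+O(1/z)\bigr)$ as $z\to\infty$, so for $P(z)=a_kz^k+\cdots\in\mathbb{P}_n$ the quotient $f(z)$ tends to the finite limit $a_n/\gamma^n$ (interpreted as $0$ when $\deg P<n$) as $z\to\infty$. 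Hence $f$ extends holomorphically across $\infty$, and $\Omega$ is a domain whose complement $\overline{G}$ in the sphere is compact.

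Next I would pass to the boundary $\Gamma=\partial\Omega$. By Carath\'eodory's theorem the conformal map $\Phi$ extends to a homeomorphism of $\overline{\Omega}$ onto $\overline{\Delta}$; in particular $f$ extends continuously to $\overline{\Omega}$, and since $|\Phi(\zeta)|=1$ for $\zeta\in\Gamma$ we get $|f(\zeta)|=|P(\zeta)|\le\|P\|_{\overline{G}}$ for all $\zeta\in\Gamma$, using that $\Gamma\subset\overline{G}$. The maximum modulus principle on $\Omega$ now gives $|f(z)|\le\max_{\zeta\in\Gamma}|f(\zeta)|\le\|P\|_{\overline{G}}$ for every $z\in\Omega$, and multiplying through by $|\Phi(z)|^n$ yields precisely (\ref{eq:B-W}). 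If one prefers to avoid quoting the Carath\'eodory extension, the same conclusion follows by an exhaustion argument: for fixed $z_0\in\Omega$ and any $R$ with $1<R<|\Phi(z_0)|$, apply the maximum modulus principle to $f$ on the exterior $\{z:|\Phi(z)|>R\}$ of the level curve $L_R$ (where $f$ is holomorphic up to $\infty$ and continuous on $L_R$) to obtain $|f(z_0)|\le R^{-n}\max_{L_R}|P|$, and then let $R\downarrow 1$, using $\max_{L_R}|P|\to\max_{\Gamma}|P|\le\|P\|_{\overline{G}}$.

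I do not expect a genuine obstacle here: this is the classical Bernstein--Walsh lemma, and once $f=P/\Phi^n$ is seen to be holomorphic on $\Omega$ including infinity, everything is an immediate consequence of the maximum principle. The only point that needs a word of care is the behaviour on $\Gamma$, i.e.\ that $|\Phi|=1$ there in a sense strong enough to run the maximum principle up to the boundary; for a general Jordan curve this is exactly where Carath\'eodory's theorem (or, equivalently, the convergence $L_R\to\Gamma$ as $R\downarrow1$) enters.
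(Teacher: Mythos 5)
Your argument is correct: it is the standard proof of the Bernstein--Walsh inequality, via the maximum modulus principle applied to $P/\Phi^{n}$, which is holomorphic on $\Omega$ including at $\infty$. The paper itself offers no proof of this lemma --- it is quoted as a classical result with a reference to Saff--Totik --- so there is nothing to compare against; both of your variants (the Carath\'eodory boundary extension and the exhaustion by level curves $L_R$ with $R\downarrow 1$) are sound, and the second has the advantage of working for compact sets $\overline{G}$ far more general than Jordan domains, which is the form in which the cited reference states the result.
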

We note that the inequality (\ref{eq:B-W}) is valid under more general assumption for $\overline{G}$; see, e.g.,
\cite[p.~153]{ST}.
We also note the following norm-comparison result, which was quoted by Suetin in \cite[p.~38]{Su74}, under the assumption
that $\Gamma$ is smooth:
\begin{equation}\label{eq:Suetin-ineq}
\|P\|_{\overline{G}}\le c(\Gamma)\,n\,\|P\|_{L^2(G)},\quad P\in\mathbb{P}_n.
\end{equation}

To underline the importance of Lemma~\ref{lem:PolyLemma} for our work here, we observe that the combination of
(\ref{eq:B-W}) with (\ref{eq:Suetin-ineq}) gives the estimate
\begin{equation}
|P(z)|\le c(\Gamma)\,n\,\|P\|_{L^2(G)}|\Phi(z)|^{n},\quad z\in\Omega,
\end{equation}
and this for $P\equiv q_{n-1}$, together with Corollary~\ref{cor:qn-1L2}, yields
\begin{equation}\label{eq:unfor}
|q_{n-1}(z)|\le c(\Gamma)\,|\Phi(z)|^{n},\quad z\in\Omega,
\end{equation}
provided $\Gamma$ is smooth.
Unfortunately, (\ref{eq:unfor}) is not adequate for delivering
the strong asymptotics for $p_n(z)$, even for smooth $\Gamma$; see  the proof of
Theorem~\ref{thm:finepn} in Section~\ref{proofs-main}.

At the other hand, the combination of Lemma~\ref{eq:PolyLemma} with Corollary~\ref{cor:qn-1L2} yields
the following finer estimate, which suffices to convey that $A_n(z)=O(1/\sqrt{n})$
in (\ref{eqinthm:finepn}):
\begin{corollary}\label{cor:qn-in-Omega}
Assume that $\Gamma$ is piecewise analytic without cusps. Then, for any $n\in\mathbb{N}$, it holds that
\begin{equation}\label{eq:qn-in-Omega}
|q_{n-1}(z)|\le\frac{c_1(\Gamma)}{\dist(z,\Gamma)}\,\frac{1}{\sqrt{n}}\,|\Phi(z)|^{n},
\quad z\in\Omega.
\end{equation}
where $c_1(\Gamma)$ depends on $\Gamma$ only.
\end{corollary}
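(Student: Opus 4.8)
The plan is to combine the polynomial growth estimate of Lemma~\ref{lem:PolyLemma} with the $L^2$-decay estimate for $q_{n-1}$ contained in Corollary~\ref{cor:qn-1L2}. Both hypotheses are available: a piecewise analytic Jordan curve without cusps is quasiconformal and rectifiable, so Lemma~\ref{lem:PolyLemma} applies with the associated reflection factor $k<1$, and Corollary~\ref{cor:qn-1L2} gives $\|q_{n-1}\|_{L^2(G)}\le c_5(\Gamma)/n$ under exactly the present assumptions.

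First I would apply Lemma~\ref{lem:PolyLemma} with $P\equiv q_{n-1}\in\mathbb{P}_{n-1}\subset\mathbb{P}_n$, obtaining
\begin{equation*}
|q_{n-1}(z)|\le\frac{1}{\dist(z,\Gamma)\sqrt{1-k^2}}\,\sqrt{\frac{n+1}{\pi}}\,
\|q_{n-1}\|_{L^2(G)}\,|\Phi(z)|^{n+1},\quad z\in\Omega.
\end{equation*}
Next I would substitute the bound $\|q_{n-1}\|_{L^2(G)}\le c_5(\Gamma)/n$ from Corollary~\ref{cor:qn-1L2}. Then $\sqrt{n+1}/n=O(1/\sqrt{n})$, so the product of the two $n$-dependent factors is $O(1/\sqrt n)$, and all the remaining constants ($1/\sqrt{\pi}$, $1/\sqrt{1-k^2}$, $c_5(\Gamma)$, and the comparison of $|\Phi(z)|^{n+1}$ with $|\Phi(z)|^n$) can be absorbed into a single constant $c_1(\Gamma)$ depending only on $\Gamma$. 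One small point to handle cleanly: the lemma produces $|\Phi(z)|^{n+1}$ whereas the corollary claims $|\Phi(z)|^n$; since $|\Phi(z)|>1$ on $\Omega$ this extra factor cannot simply be dropped pointwise, but it can be folded into $\dist(z,\Gamma)$ via the estimate $|\Phi(z)|\le 1+\dist(z,\Gamma)\,|\Phi'(z)|\cdot(\text{bounded factor})$ coming from the distortion inequality~\eqref{eq:distortion}, or more simply one notes that $|\Phi(z)|^{n+1}=|\Phi(z)|\cdot|\Phi(z)|^n$ and that the stated corollary is really only used for $z$ in a neighbourhood of $\Gamma$ where $|\Phi(z)|$ is bounded; alternatively, one may just keep the exponent $n+1$ — but to match the statement exactly I would invoke $\eqref{eq:distortion}$ to trade the surplus $|\Phi(z)|-1$ against $\dist(z,\Gamma)|\Phi'(z)|$, noting $|\Phi'|$ is locally bounded, so the factor $|\Phi(z)|$ is controlled by $1+c\,\dist(z,\Gamma)$ near $\Gamma$ and by $|\Phi(z)|$ itself away from it.

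There is essentially no obstacle here: the proof is a one-line composition of two previously established results, and the only thing requiring a moment's care is the bookkeeping of the power of $|\Phi(z)|$, which is disposed of as above. The corollary is stated mainly as a packaging of Lemma~\ref{lem:PolyLemma} and Corollary~\ref{cor:qn-1L2} in the exact form needed for the proof of Theorem~\ref{thm:finepn}, where, via the representation~\eqref{eq:pnPhinPhip}, dividing $|q_{n-1}(z)|$ by $|\Phi^n(z)\Phi'(z)|$ yields precisely the first term $c_2(\Gamma)/(\dist(z,\Gamma)|\Phi'(z)|\sqrt n)$ of the error bound~\eqref{eqinthm:finepnii1}.
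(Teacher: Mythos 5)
Your overall route is exactly the paper's: the corollary is obtained by applying Lemma~\ref{lem:PolyLemma} to $q_{n-1}$ (legitimate, since a piecewise analytic curve without cusps is quasiconformal and rectifiable) and inserting the bound $\|q_{n-1}\|_{L^2(G)}\le c_5(\Gamma)/n$ from Corollary~\ref{cor:qn-1L2}, so that $\sqrt{n}\cdot(1/n)=1/\sqrt{n}$. However, the way you execute the first step introduces an error that your subsequent patch does not repair. You embed $q_{n-1}\in\mathbb{P}_{n-1}\subset\mathbb{P}_n$ and apply the lemma with degree parameter $n$, which produces $|\Phi(z)|^{n+1}$, and you then try to trade the surplus factor $|\Phi(z)|$ against $\dist(z,\Gamma)$. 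This cannot work as stated: $|\Phi(z)|\sim\gamma|z|$ is unbounded on $\Omega$, so $|\Phi(z)|^{n+1}$ is genuinely larger than $|\Phi(z)|^{n}$ by an unbounded factor, and even near $\Gamma$ the comparison $|\Phi(z)|-1\le c\,\dist(z,\Gamma)$ fails at a corner $z_j$ with exterior angle $\omega_j\pi>\pi$, where $|\Phi(z)|-1\asymp|z-z_j|^{1/\omega_j}\gg|z-z_j|$ (and $|\Phi'|$ blows up there, so ``locally bounded'' is also not available uniformly). The distortion inequality~\eqref{eq:distortion} goes in the wrong direction for this purpose.

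The fix is immediate and removes the need for any of this bookkeeping: apply Lemma~\ref{lem:PolyLemma} with the correct degree, i.e.\ to $P=q_{n-1}\in\mathbb{P}_{n-1}$ with $n$ replaced by $n-1$ throughout the lemma. This yields directly
\begin{equation*}
|q_{n-1}(z)|\le\frac{1}{\dist(z,\Gamma)\sqrt{1-k^2}}\,\sqrt{\frac{n}{\pi}}\,
\|q_{n-1}\|_{L^2(G)}\,|\Phi(z)|^{n},\quad z\in\Omega,
\end{equation*}
with exponent $n$ on $|\Phi(z)|$ as in~\eqref{eq:qn-in-Omega}, and combining this with $\|q_{n-1}\|_{L^2(G)}\le c_5(\Gamma)/n$ gives the corollary with $c_1(\Gamma)=c_5(\Gamma)/\sqrt{\pi(1-k^2)}$. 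With that correction your argument coincides with the paper's.
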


\section{Proofs for quasiconformal boundary}\label{proofs-qc}
\setcounter{equation}{0}
Assume now that $\Gamma$ is a quasiconformal curve. Our arguments in this section are based on the use of a $K$-\textit{quasiconformal reflection} $y:\overline{\mathbb{C}}\to\overline{\mathbb{C}}$ defined, for some $K\ge 1$, by $\Gamma$ and a fixed point $a$ in $G$.
Below, we collect together some well-known properties of $y(z)$ which are important for our work here and we refer
to the four monographs \cite{Ah66}, \cite{LV}, \cite{ABD} and \cite{AstalaIwaniecMartin}, for a concise account of
results in Quasiconformal Mapping Theory; see also \cite[\S 6]{Be77}.
\begin{remark}[Properties of quasiconformal reflection] \label{lem:prop-qc}
With the above notations it holds that:
\begin{enumerate}[\qquad]\itemsep=0pt
\item[(A1)]
$\overline{y}$ is a $K$-quasiconformal mapping $\overline{\mathbb{C}}\to\overline{\mathbb{C}}$;
\item[(A2)]
$y(G)=\Omega$, $y(\Omega)=G$, with $y(a)=\infty$ and $y(\infty)=a$;
\item[(A3)]
$y(z)=z$, for every $z\in\Gamma$ and $y(y(z))=z$, for all $z\in\mathbb{C}$.
\end{enumerate}
\end{remark}
For a function
$f:\overline{\mathbb{C}}\to\overline{\mathbb{C}}$ we use the notation $f_{z}$ and $f_{\overline{z}}$ to denote its formal complex derivatives
$$
f_z:=\frac{\partial f}{\partial z}=
\frac{1}{2}\left(\frac{\partial f}{\partial x}-i\frac{\partial f}{\partial x}\right)
\quad\text{and}\quad
f_{\overline{z}}:=\frac{\partial f}{\partial \overline{z}}=
\frac{1}{2}\left(\frac{\partial f}{\partial x}+i\frac{\partial f}{\partial x}\right).
$$
We recall that $f_z=f^\prime$ and $f_{\overline{z}}=0$, whenever $f$ is analytic, and
the two identities
\begin{equation}\label{eq:dbar-rule}
\overline{f_z}=\overline{f}_{\overline{z}}\quad\text{and}\quad
\overline{f_{\overline{z}}}=\overline{f}_z.
\end{equation}
We also recall the chain rule for formal derivatives that is, if $\zeta=g(z)$, then
\begin{equation}\label{eq:chain-rule-dbar}
(f\circ g)_{\overline{z}}=f_{\zeta}(g(z))\,g_{\overline{z}}(z)+f_{\overline{\zeta}}(g(z))\,\overline{g_z(z)}.
\end{equation}

The property (A1) implies that $y$ is a sense-reversing homeomorphism of $\overline{\mathbb{C}}$ onto $\overline{\mathbb{C}}$ satisfying, almost everywhere in $\mathbb{C}$,
\begin{equation}\label{eq:yzk}
\left|\frac{y_z}{y_{\overline{z}}}\right|=\left|\frac{\overline{y}_{\overline{z}}}{\overline{y}_{{z}}}\right|
\le k:=\frac{K-1}{K+1}<1.
\end{equation}
It further implies that, $y$ belongs to the Sobolev space $W^{1,2}_{loc}(\mathbb{C})$.
Recall that we refer to $k$ as the reflection factor of $\Gamma$ associated with $y$.

Let
$$
J(y(z)):=|y_z|^2-|y_{\overline{z}}|^2
$$
denote the Jacobian of the transformation $y:\overline{\mathbb{C}}\to\overline{\mathbb{C}}$, and note that $J(y(z))<0$,
because $y(z)$ is sense-reversing. It follows easily from (\ref{eq:yzk})
\begin{equation}\label{eq:yzJ}
|y_{\overline{z}}|^2\le\frac{-1}{1-k^2}\,J(y(z))\quad\textup{and}\quad
|y_z|^2\le\frac{-k^2}{1-k^2}\,J(y(z)),
\end{equation}
almost everywhere in $\mathbb{C}$. Thus, the change of variables
$\zeta=y(z)$ and the property (A2) yield immediately the two estimates
\begin{equation}\label{eq:intyzJ}
\int_\Omega|y_{\overline{z}}|^2dA(z)\le\frac{1}{1-k^2}A(G)\quad\textup{and}\quad
\int_\Omega|y_z|^2dA(z)\le\frac{k^2}{1-k^2}A(G),
\end{equation}
where $A(G)$ stands for the area of $G$.

\subsection{Proof of Lemma~\ref{lem:useful-1}}\label{sec:proof-lem:useful-1}
Since the function $g(z)$ is analytic in $\Omega$, it follows from (\ref{eq:dbar-rule}) and the chain rule (\ref{eq:chain-rule-dbar}) that
$$
\left[(\overline{g}\circ y)(z)\right]_{\overline{z}}=
\overline{g^\prime(y(z))}\,\overline{y_z},\quad z\in G.
$$
Hence, it is easy to verify that the function $\left[(\overline{g}\circ y)(z)\right]_{\overline{z}}$ is square
integrable in $G$.
This is a consequence of the assumption $g^\prime\in L^2(\Omega)$ and the
second inequality in (\ref{eq:yzJ}). Indeed, using the change of variables $\zeta=y(z)$ we have:
\begin{align}\label{eq:En-in-W}
\int_G\left|\left[(\overline{g}\circ y)(z)\right]_{\overline{z}}\right|^2dA(z)
=&\int_G\left|g^\prime(y(z))\right|^2\left|y_z\right|^2dA(z)\nonumber \\
\le& \frac{-k^2}{1-k^2}\,\int_G \left|{g^\prime}(y(z))\right|^2 J(y(z))\,dA(z)\nonumber\\
=&\frac{k^2}{1-k^2}\,\int_\Omega \left|{g^\prime}(\zeta)\right|^2 \,dA(\zeta).
\end{align}

Next, we set
$$
\eta_n:=\frac{1}{2i}\int_\Gamma f(z)\overline{g(z)}dz
$$
and observe that, since $y(z)=z$, for $z\in \Gamma$, $\eta_n$ can be written as
$$
\eta_n
=\frac{1}{2i}\,\int_\Gamma f(z)\,\overline{g(y(z))}\,dz.
$$

Finally, we note that the function $g(y(z))$ defines a
quasiconformal extension of $g(z)$ into $G$, which is continuous on $\overline{G}$.
Therefore, from the assumptions on $f$, $g$ and $\Gamma$ we conclude by means of
Green's formula that
\begin{equation*}
\begin{alignedat}{1}
\eta_n
=\int_G \left[f(z)\,(\overline{g}\circ y)(z)\right]_{\overline{z}}\,dA(z)
=\int_G f(z)\,\left[(\overline{g}\circ y)(z)\right]_{\overline{z}}\,dA(z),
\end{alignedat}
\end{equation*}
and the estimate (\ref{eq:useful-1}) then follows by applying the
Cauchy-Schwarz inequality to the last integral and using  (\ref{eq:En-in-W}).
\qed

\subsection{Proof of Theorem~\ref{thm:betan}}\label{sec:proof-thm:betan}
In view of Lemma~\ref{lem:Hn-in-L2} and (\ref{eq:GnFn+1}) we
note that $E_{n+1}^\prime\in L^2(\Omega)$ and apply the result of Lemma~\ref{lem:useful-1}
with  $f\equiv q_{n-1}$ and $g\equiv E_{n+1}$ to the expression of
$\beta_n$ given by (\ref{eq:betanEn}) to we obtain:
$$
\beta_n\le \frac{k}{\sqrt{1-k^2}}\,\frac{1}{\pi}\,\|q_{n-1}\|_{L^2(G)}\|E^\prime_{n+1}\|_{L^2(\Omega)}.
$$
Therefore, using the definition of $\beta_n$ and
$\varepsilon_n$ in (\ref{eq:betandef}) and (\ref{eq:epsndef}), we conclude that
\begin{equation*}
\begin{alignedat}{1}
\beta_n^2
&\le \frac{k^2}{1-k^2}\,\frac{(n+1)^2}{\pi^2}\,\|q_{n-1}\|^2_{L^2(G)}\|H_{n}\|^2_{L^2(\Omega)}\\
&=\frac{k^2}{1-k^2}\,\beta_n \,\varepsilon_n,
\end{alignedat}
\end{equation*}
which yields at once the required estimate (\ref{eq:betan}).
\qed

\subsection{Proof of Theorem~\ref{lem:epsnge}}\label{sec:proof-lem:epsnge}
Assume that $R>1$ is large enough so that the expansion (\ref{eq:Endef}) is valid for all $z\in L_R$.
Then, from the residue theorem and the splitting (\ref{eq:PhinFn}) we have
\begin{equation*}\label{eq:c_1-1}
c_1^{(n+1)}=\frac{1}{2\pi i}\,\int_{L_R}E_{n+1}(z)dz=-\frac{1}{2\pi i}\,\int_{L_R}\Phi^{n+1}(z)dz.
\end{equation*}

Next, by differentiating the expansion (\ref{eq:Psi}) of $\Psi(w)$ and applying again the residue theorem we see that
\begin{equation}\label{eq:c_1-2}
-(n+1)b_{n+1}=\frac{1}{2\pi i}\,\int_{|w|=R}w^{n+1}\Psi^\prime(w)dw=\frac{1}{2\pi i}\,\int_{L_R}\Phi^{n+1}(z)dz.
\end{equation}
Therefore, for any $n\in\mathbb{N}$,
\begin{equation*}
c_1^{(n+1)}=(n+1)b_{n+1}.
\end{equation*}
This, in view of (\ref{eq:Endef}) and (\ref{eq:GnFn+1}) shows that $a_2^{(n)}=-b_{n+1}$, where $a_2^{(n)}$ is the
coefficient of $1/z^2$ in the expansion (\ref{eq:Hndef}) of $H_n(z)$.
Hence, another application of the residue theorem yields that
\begin{equation*}
-b_{n+1}=\frac{1}{2\pi i}\,\int_{L_R}H_{n}(z)zdz=\frac{1}{2\pi i}\,\int_{\Gamma}H_{n}(z)zdz.
\end{equation*}

Furthermore, by using the fact that $y(z)=z$, for $z\in\Gamma$, and the  properties
of $H_n(z)$ and $y(z)$ in $\Omega$, we obtain  with the help of Green's formula in the unbounded
domain $\Omega$:
\begin{equation}\label{eq:bn+1Hn}
b_{n+1}=-\frac{1}{2\pi i}\,\int_{\Gamma}H_{n}(z)y(z)dz=
\frac{1}{\pi}\int_\Omega H_n(z) y_{\overline{z}}\,dA(z).
\end{equation}
The last integral can be estimated by means of the Cauchy-Schwarz inequality and the first inequality
in (\ref{eq:intyzJ}). Indeed,
\begin{equation*}
\begin{alignedat}{1}
\left|\int_\Omega H_n(z) y_{\overline{z}}\,dA(z)\right|
&\le\|H_n\|_{L_2(\Omega)} \left[\frac{1}{1-k^2}A(G)\right]^{1/2},
\end{alignedat}
\end{equation*}
and the required result emerges from (\ref{eq:bn+1Hn}) and the definition of $\varepsilon_n$.
\qed

\subsection{Proof of Lemma~\ref{lem:PolyLemma}}\label{sec:proof-lem:PolyLemma}
Let $P\in\mathbb{P}_n$ and fix $z\in\Omega$. Then, the function $P(z)/\Phi^{n+1}(z)$ is analytic in $\Omega$, continuous
on $\Gamma$ and vanishes at $\infty$. Hence, from Cauchy's formula and the property $y(\zeta)=\zeta$, for
$\zeta\in\Gamma$, we have
\begin{equation*}
\frac{P(z)}{\Phi^{n+1}(z)}
=-\frac{1}{2\pi i}\int_\Gamma\frac{g(\zeta)\,d\zeta}{\Phi^{n+1}(\zeta)}
=-\frac{1}{2\pi i}\int_\Gamma\frac{g(\zeta)\,d\zeta}{(\Phi^{n+1}\circ y)(\zeta)},
\end{equation*}
where $g(\zeta):=P(\zeta)/(\zeta-z)$. Now, the function $1/\Phi^{n+1}\circ y$ is continuous on $\overline{G}$, and its
$\partial/\partial_{\overline{z}}$ derivative belongs to $L^2(G)$; see (\ref{eq:Phip/Phin}) below.
Hence, from Green's formula we have that
\begin{align}\label{eq:P/Phin}
\frac{P(z)}{\Phi^{n+1}(z)} &= -\frac{1}{\pi}\,\int_G
\left[\frac{g(\zeta)}{(\Phi^{n+1}\circ y)(\zeta)}\right]_{\overline{\zeta}}dA(\zeta)\nonumber \\
&= \frac{n+1}{\pi}\,\int_G
g(\zeta)\,\frac{\Phi^\prime(y(\zeta))\,y_{\overline{\zeta}}}{(\Phi^{n+2}\circ y)(\zeta)}dA(\zeta),
\end{align}
where we made use of the fact that $g$ is analytic on $\overline{G}$. Next, using (\ref{eq:yzJ}) it is readily seen that
\begin{align}\label{eq:Phip/Phin}
\int_G\frac{|\Phi^\prime(y(\zeta))|^2\,|y_{\overline{\zeta}}|^2}{|(\Phi^{n+2}\circ y)(\zeta)|^2} dA(\zeta)
&\le \frac{-1}{1-k^2}\int_G\frac{|\Phi^\prime(y(\zeta))|^2\,
J(y(\zeta))}{|(\Phi^{n+2}\circ y)(\zeta)|^2}dA(\zeta)\nonumber \\
&=\frac{1}{1-k^2}\int_\Omega\frac{|\Phi^\prime(t)|^2\,dA(t)}{|\Phi^{n+2}(t)|^2}\nonumber \\
&=\frac{1}{1-k^2}\int_\Delta\frac{dA(w)}{|w^{n+2}|^2}
=\frac{1}{1-k^2}\frac{\pi}{(n+1)}.
\end{align}
Obviously,
\begin{equation*}
\int_G|g(\zeta)|^2dA(\zeta)
\le\frac{\|P\|_{L^2(G)}^2}{\left(\dist(z,\Gamma)\right)^2},
\end{equation*}
and the result (\ref{eq:PolyLemma}) follows from (\ref{eq:Phip/Phin}) and the application of the Cauchy-Schwarz
inequality to the integral in (\ref{eq:P/Phin}).
\qed

\section{Proofs for piecewise analytic boundary}\label{proofs-pa}
\setcounter{equation}{0}

We recall our assumption that $\Gamma$ consists of $N$ analytic arcs, which meet at corner points $z_j$, $j=1,\ldots,N$,
forming there exterior angles $\omega_j\pi$, with $0<\omega_j<2$.

The basic idea underlying the work in this section is simple. Extend, using Schwarz reflection, $\Phi$ across each arc of $\Gamma$ inside $G$, so that this extension is conformal in the exterior of a piecewise analytic Jordan curve $\Gamma^\prime$, which shares with $\Gamma$ the same corners $z_j$ and otherwise lies in $G$. $\Gamma^\prime$ can be chosen so that $\Phi$ is analytic on $\Gamma^\prime$, apart from $z_j$. Hence, the four representations
(\ref{eq:FnIntRep})--(\ref{eq:EnIntRep}) and (\ref{eq:GnIntRep})--(\ref{eq:HnIntRep}) remain valid if $\Gamma$ is deformed to $\Gamma^\prime$. Next, divide  $\Gamma^\prime$ into two parts: a part $l$ containing arcs emanating from the corners $z_j$, and a part $\tau$
constituting the complement $\Gamma^\prime\setminus l$, so that there exists a compact set $B:=B(\Gamma)$ of $G$  which contains $\tau$. When
$\zeta\in\tau$, $\Phi(\zeta)^n$ decays geometrically to zero, i.e., $|\Phi(\zeta)|^n=O(\rho^n)$, for some
$\rho:=\rho(\Gamma)<1,$ and therefore its own contribution is negligible, when compared with the contribution of
$\Phi(\zeta)^n$, for $\zeta\in l$.
To make things more precise, we assume (as we may) that $l$ is formed by linear segments, and we number these two
segment meeting at $z_j$ by $l_j^i$, $i=1,2$; see Figure~\ref{fig:Gammapr}.

\begin{figure}[h]
\begin{center}
\includegraphics*[scale=0.65]{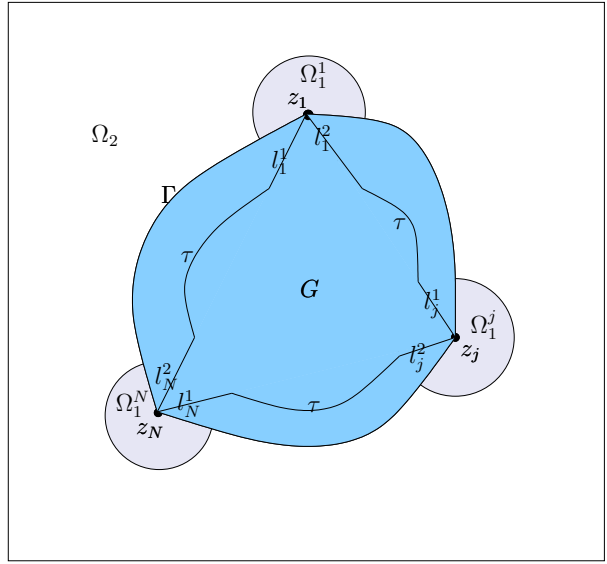}
\caption{The two decompositions: $\Gamma^\prime=l\cup\tau$ and $\Omega=\Omega_1\cup\Omega_2$.}
\label{fig:Gammapr}
\end{center}
\end{figure}

In the sequel, we make extensive use of the following four inequalities:
\begin{remark}[Behaviour of $\Phi$ near an analytic corner]\label{rem:Lehman}
For any $\zeta\in l_j^i$ there holds that:
\begin{enumerate}\itemsep=2pt
\item[\textup{(i)}]
$\displaystyle{|\Phi(\zeta)-\Phi(z_j)|\ge c\,|\zeta-z_j|^{1/\omega_j}}$;
\item[\textup{(ii)}]
$\displaystyle{|\Phi^\prime(\zeta)|\le c\,|\zeta-z_j|^{1/\omega_j-1}}$;
\item[\textup{(iii)}]
$\displaystyle{|\Phi(\zeta)|\le 1-c\,|\zeta-z_j|^{1/\omega_j}}$;
\item[\textup{(iv)}]
$\displaystyle{\dist(\zeta,\Gamma)\ge c\,|\zeta-z_j|}$.
\end{enumerate}
\end{remark}
(In Remark~\ref{rem:Lehman}, and below, we use the symbol $c$ generically in order to denote positive constants,
possibly different ones, that depend on $\Gamma$ only.)

The inequalities (i) and (ii) emerge from Lehman's asymptotic expansions for conformal mappings, near an analytic corner \cite{Lehman}.
The third inequality follows easily from (i), because reflection preserves angles. Finally, (iv) is a simple fact of conformal mapping geometry.

\subsection{Proof of Theorem~\ref{thm:HnOmgae}}\label{sec:proof-thm:HnOmgae}
The proof goes along similar lines as those taken in \cite{Ga01} for deriving an estimate for $F_n(z)$ in $G$, with one significant difference, though. Here $z$ lies in $\Omega$, rather than $G$, and thus $z$ is allowed to tend to $\Gamma$ without having to alter the curve $\Gamma^\prime$. As a consequence, the set $B$ defined above does not depend on $z$, and thus  $\dist(z,\tau)\ge\dist(z,B)>\dist(\Gamma,B)=c(\Gamma)$.

The details are as follows: From the discussion above, it is easy to see that, for $z\in\Omega$,
\begin{align}\label{eq:Hnparts}
H_n(z)&=\frac{1}{2\pi i}
\int_{\Gamma^\prime}\frac{\Phi^n(\zeta)\Phi^\prime(\zeta)}{\zeta-z}\,d\zeta\nonumber\\
  &=\frac{1}{2\pi i}\sum_j\int_{l_j^1\cup l_j^2}
    \frac{\Phi^n(\zeta)\Phi^\prime(\zeta)}{\zeta-z}\,d\zeta
     +\frac{1}{2\pi i}\int_{\tau}\frac{\Phi^n(\zeta)\Phi^\prime(\zeta)}{\zeta-z}\,d\zeta \nonumber\\
  &=\frac{1}{2\pi i}\sum_j\int_{l_j^1\cup l_j^2}\frac{\Phi^n(\zeta)\Phi^\prime(\zeta)}{\zeta-z}\,d\zeta
     +O(\rho^n),
\end{align}
for some $\rho:=\rho(\Gamma)<1$, independent of $z$. Hence, we only need to estimate the integral
$$
I_j^i:=\int_{l_j^i}\frac{\Phi^n(\zeta)\Phi^\prime(\zeta)}{\zeta-z}\,d\zeta.
$$

Let $s$ denote the arclength on ${l_j^i}$ measured from $z_j$. Then, Remark~\ref{rem:Lehman} yields the following two inequalities, which hold for any $\zeta\in l_j^i$:
\begin{equation}\label{eq:InePhi}
|\Phi(\zeta)|\le 1-cs^{1/\omega_j}<\exp(-cs^{1/\omega_j})\quad\textup{and}\quad
|\Phi^\prime(\zeta)|\le cs^{1/\omega_j-1}.
\end{equation}
Since $1/\omega_j>1/2$, these imply
\begin{equation}\label{eq:Iji}
|I_j^i|\le\frac{c}{\dist(z,\Gamma)}\int_0^\infty \textup{e}^{-cns^{1/\omega_j}}s^{1/\omega_j-1}\,ds=\frac{c\,\omega_j}{\dist(z,\Gamma)}\,\frac{1}{n},
\end{equation}
and the required estimate (\ref{eq:HnOmega}) follows from (\ref{eq:Hnparts}).
\qed

The next result is needed in establishing Theorem~\ref{thm:epsn}.
\begin{lemma}\label{lem:final}
With $\omega\in(0,2]$ and $k\in\mathbb{N}$, set $\delta:=k^{-\omega}$ and let
\begin{equation}\label{eq:Iome-del-k}
I(\omega,k):=\int_0^{\delta}\left[\int_r^\infty
{\textup{e}^{-ks^{1/\omega}}s^{1/\omega-2}}\,ds \right]^2rdr.
\end{equation}
Then,
\begin{equation}\label{eq:lemfinal}
I(\omega,k)\le\frac{c}{k^2}.
\end{equation}
\end{lemma}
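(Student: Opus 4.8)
The plan is to estimate the inner integral in \eqref{eq:Iome-del-k} and then integrate the square against $r\,dr$ on $(0,\delta)$. Throughout I would write $p:=1/\omega$, so $p\in[1/2,\infty)$, and recall $\delta=k^{-\omega}=k^{-1/p}$. The key observation is that for $r\le\delta$ the factor $\mathrm{e}^{-ks^{1/\omega}}$ is essentially harmless on the relevant range: on $0\le s\le\delta$ one has $ks^{p}\le k\delta^{p}=1$, so the exponential is bounded below by $\mathrm{e}^{-1}$ and above by $1$, while for $s\ge\delta$ the exponential provides genuine decay. I would therefore split the inner integral at $s=\delta$:
\[
\int_r^\infty \mathrm{e}^{-ks^{p}}s^{p-2}\,ds
=\int_r^{\delta}\mathrm{e}^{-ks^{p}}s^{p-2}\,ds+\int_{\delta}^\infty \mathrm{e}^{-ks^{p}}s^{p-2}\,ds
=:J_1(r)+J_2,
\]
where $J_2$ does not depend on $r$.

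For $J_2$ the substitution $u=ks^{p}$ (so $s=(u/k)^{1/p}$, $ds=\frac{1}{pk}(u/k)^{1/p-1}du$) turns it into a constant times $k^{-(p-1)/p}\int_1^\infty \mathrm{e}^{-u}u^{(p-1)/p-1}\,du$; the last integral is a finite constant depending only on $p$ (hence on $\omega$), so $J_2\le c\,k^{-(p-1)/p}=c\,k^{-1+\omega}$, i.e.\ $J_2\le c\,k^{-1+\omega}$. For $J_1(r)$ I would bound $\mathrm{e}^{-ks^{p}}\le 1$ and just integrate the power $s^{p-2}$ from $r$ to $\delta$. Here a small case distinction appears according to whether $p-1$ is positive, zero, or negative (equivalently $\omega<1$, $\omega=1$, $\omega>1$): in every case $\int_r^{\delta}s^{p-2}\,ds\le c(\omega)\,\bigl(\delta^{p-1}+r^{p-1}\bigr)$ when $p<1$, $\le c\log(\delta/r)$ when $p=1$, and $\le c\,\delta^{p-1}$ when $p>1$; and since $\delta^{p-1}=k^{-(p-1)/p}=k^{-1+\omega}$, in all cases $J_1(r)\le c\,k^{-1+\omega}\bigl(1+(r/\delta)^{p-1}\bigr)$ or, in the logarithmic case, $J_1(r)\le c\,k^{-1+\omega}\bigl(1+\log(\delta/r)\bigr)$. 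Combining, $\int_r^\infty\mathrm{e}^{-ks^{p}}s^{p-2}\,ds\le c\,k^{-1+\omega}\,\Xi(r/\delta)$, where $\Xi(t)$ equals $1+t^{p-1}$ (if $p\neq1$) or $1+|\log t|$ (if $p=1$) for $t\in(0,1]$.

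It remains to substitute this into \eqref{eq:Iome-del-k} and carry out the outer integral. Writing $t=r/\delta$, $dr=\delta\,dt$, one gets
\[
I(\omega,k)\le c\,k^{-2+2\omega}\,\delta^2\int_0^1 \Xi(t)^2\,t\,dt
=c\,k^{-2+2\omega}\,k^{-2\omega}\int_0^1\Xi(t)^2\,t\,dt
=\frac{c}{k^2}\int_0^1\Xi(t)^2\,t\,dt,
\]
using $\delta^2=k^{-2\omega}$. The only thing to check is that $\int_0^1\Xi(t)^2\,t\,dt<\infty$ with a bound depending only on $\omega$: this is clear in the logarithmic case (and when $p\ge1$, since then $\Xi$ is bounded on $(0,1]$), and when $p<1$ it amounts to $\int_0^1 t^{2p-1}\,dt<\infty$, which holds because $2p-1>-1$ for every $p>0$; note $p>0$ always and $p\ge1/2$ is given, so in fact $2p-1\ge0$ here and the integral is trivially finite. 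This yields \eqref{eq:lemfinal}.

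The only mild subtlety — the "main obstacle," though it is not a deep one — is keeping the case analysis on the sign of $p-1$ (i.e.\ of $2-\omega$, or rather $\omega\lessgtr1$) clean and making sure the constant genuinely depends only on $\omega$ and not on $k$; everything collapses once one notices the scaling $\delta^{p}=k^{-1}$, which is exactly what forces all the $k$-powers to recombine into $k^{-2}$.
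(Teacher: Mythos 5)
Your proof is correct, and it takes a genuinely different route from the paper's. The paper argues by four separate cases: for $\omega=1$ it writes the inner integral as the exponential integral $E_1(kr)$ and invokes $\int_0^\infty E_1^2(x)\,dx=2\log 2$; for $0<\omega<1$ it bounds the inner integral by the full integral from $0$ to $\infty$, a Gamma-function value times $k^{\omega-1}$; for $\omega=2$ it uses an explicit antiderivative of $\textup{e}^{-ks^{1/2}}s^{-3/2}$ in terms of $E_1$; and for $1<\omega<2$ it reduces to the cases $\omega=1$ and $\omega=2$ by a pointwise monotonicity comparison of the integrand in $\omega$ over three ranges of $s$. You instead split the inner integral once at $s=\delta$, exploit the scaling identity $k\delta^{1/\omega}=1$ (so the exponential is harmless below $\delta$, and the tail above $\delta$ is an incomplete-Gamma constant times $k^{\omega-1}$), and then rescale the outer integral by $t=r/\delta$; the only residual case distinction is the sign of $1/\omega-1$ in an elementary power integral, and the finiteness of $\int_0^1\Xi(t)^2 t\,dt$ uses exactly the hypothesis $\omega\le 2$ (i.e.\ $2/\omega-1\ge 0$). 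Your version is more unified and elementary, dispenses with the special-function identities and the monotonicity trick, and makes transparent why all powers of $k$ recombine into $k^{-2}$ — which is precisely the balancing phenomenon the paper comments on in Remark~\ref{rem:deltaj}. I checked the exponent bookkeeping ($J_2\le c\,k^{\omega-1}$ via $u=ks^{1/\omega}$, $\delta^{1/\omega-1}=k^{\omega-1}$, $\delta^2=k^{-2\omega}$) and it is all consistent; the constants depend only on $\omega$, as required.
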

(In the statement and proof of Lemma~\ref{lem:final} the positive constants $c$ depend on $\omega$ only.)
\begin{proof}
We consider separately the four complementary cases: (I) $\omega=1$, (II) $0<\omega<1$, (III) $\omega=2$ and (IV) $1<\omega<2$.

\medskip
\noindent
\textit{Case} (I): $\omega=1$. Note,
$$
I(1,k)=\int_0^{\delta}\left[\int_r^\infty\frac{\textup{e}^{-ks}}{s}\,ds \right]^2rdr
=\int_0^{\delta}E_1^2(kr)\,rdr,
$$
where $E_1(x)$, denotes the exponential integral $E_1(x):=\int_x^\infty{t^{-1}}{\textup{e}^{-t}}\,dt$,
with $x>0$. Using the formula $\int_0^\infty E_1^2(x)dx=2\log 2$ we thus have
$$
I(1,k)\le\delta\int_0^{\delta}E_1^2(kr)dr<\frac{\delta}{k}\int_0^\infty E_1^2(x)dx=\frac{c}{k^2}.
$$

\medskip
\noindent
\textit{Case} (II): $0<\omega<1$. Now $1/\omega>1$. Consequently, for $r>0$,
$$
\int_r^\infty\textup{e}^{-ks^{1/\omega}}s^{1/\omega-2}\,ds\le
\int_0^\infty\textup{e}^{-ks^{1/\omega}}s^{1/\omega-2}\,ds=\omega\,\Gamma(1-\omega)\,k^{\omega-1},
$$
where $\Gamma(x):=\int_0^\infty t^{x-1}\textup{e}^{-t}dt$ denotes the Gamma function with argument $x>0$.
This yields
\begin{equation}\label{eq:I-CaseII}
I(\omega,k)\le c\,\frac{\delta^2}{k^{2(1-\omega)}}=\frac{c}{k^2}.
\end{equation}

\medskip
\noindent
\textit{Case} (III): $\omega=2$. We note first the formula, valid for $r>0$,
$$
\int_r^\infty\textup{e}^{-ks^{1/2}}s^{-3/2}\,ds=
2\left({\textup{e}^{-k\,r^{1/2}}}{r^{-1/2}}-k\,E_1(k\,r^{1/2})\right).
$$
Therefore,
$$
I(2,k)< c\,\int_0^\infty\textup{e}^{-2k\,r^{1/2}}dr
+ c\,k^2\int_0^\infty E_1^2(k\,r^{1/2})\,rdr=\frac{c}{k^2}+k^2\frac{c}{k^4}=\frac{c}{k^2}.
$$

\medskip
\noindent
\textit{Case} (IV): $1<\omega<2$. The result for $1<\omega<2$ can be established as a special case of $\omega=1$ and $\omega=2$. To see this set $h(\omega,s):={\textup{e}^{-ks^{1/\omega}}s^{1/\omega-2}}$ and split the integral from $r$ to $\infty$ in (\ref{eq:Iome-del-k}) into three parts:
\begin{equation*}
\int_r^\infty h(\omega,s)\,ds
=\int_r^\delta h(\omega,s)\,ds+\int_\delta^1 h(\omega,s)\,ds+\int_1^\infty h(\omega,s)\,ds.
\end{equation*}
Next, observe that if $s\in(0,\delta)\cup(1,\infty)$, then $h(\omega,s)$ is an increasing function of $\omega$,
hence $h(\omega,s)\le h(2,s)$. At the other hand, when $s\in(\delta,1)$, then $h(\omega,s)$ is a decreasing function of $\omega$, thus $h(\omega,s)\le h(1,s)$.

Summing up, we therefore have
$$
\int_r^\infty h(\omega,s)\,ds
\le\int_r^\infty \textup{e}^{-ks^{1/2}}s^{-3/2}\,ds+\int_r^\infty \frac{\textup{e}^{-ks}}{s}\,ds,
$$
and the result (\ref{eq:lemfinal}) follows easily using the estimates given in Cases (I) and (III).
\end{proof}

\subsection{Proof of Theorem~\ref{thm:epsn}}\label{sec:proof-thm:epsn}
We choose positive quantities
\begin{equation}\label{eq:deltaj}
\delta_j=\delta_{n,j}:=c\,{n^{-\omega_j}},\quad j=1,\ldots,N,
\end{equation}
where $c$ is small enough so that any two of the $N$ domains $\Omega_1^{j}:=\{z\in\Omega:|z-z_j|<\delta_j\}$, are disjoint from each other. Next, we set $\Omega_{1}:=\cup_j^N\Omega_1^{j}$ and split $\Omega$ into two parts $\Omega_{1}$ and $\Omega_{2}$; see Figure~\ref{fig:Gammapr}.

Using this partition of $\Omega$, we express $\|H_n\|^2_{L^2(\Omega)}$ as the sum of two integrals over $\Omega_1$ and
$\Omega_2$. This gives
\begin{align}\label{eq:I1+I2}
\|H_n\|^2_{L^2(\Omega)}&=\int_{\Omega_{1}}|H_n(z)|^2dA(z)
+\frac{1}{(n+1)^2}\int_{\Omega_{2}}|E^\prime_{n+1}(z)|^2dA(z)\nonumber\\
&=:J_1(n)+J_2(n),
\end{align}
where we made use of (\ref{eq:GnFn+1}).
Hence, deriving the estimate (\ref{eq:epsn}) it now amounts to showing that: (a) $J_1(n)=O(1/n^2)$ and
(b) $J_2(n)=O(1/n^2)$.

\noindent
\textbf{(a)} Let
\begin{equation*}
T_j(n):=\int_{\Omega_{1}^j}|H_n(z)|^2dA(z),\quad j=1,\ldots,N,
\end{equation*}
so that,
\begin{equation}\label{eq:J1=sumint}
J_1(n)=\sum_{j=1}^NT_j(n).
\end{equation}

With $z\in\Omega_1^j$, set $r:=|z-z_j|$ and observe that, in view of Remark~\ref{rem:Lehman} (iv),
$|\zeta-z|\approx s+r$, if $\zeta\in l_j^1\cup l_j^2$, while $|\zeta-z|\ge c$, if $\zeta\in l_k^1\cup l_k^2$, with
$k\ne j$, where $s$ denote the arclength on ${l_j^i}$ measured from $z_j$. Consequently, since
$\Omega_1^j\subset\{z:|z-z_j|<\delta_j\}$, we obtain using (\ref{eq:Hnparts}) and (\ref{eq:InePhi}):
\begin{align}\label{eq:Tjn-le}
T_j(n)&\le
c\int_0^{\delta_j}\left[\int_0^\infty\frac{\textup{e}^{-cns^{1/\omega_j}}s^{1/\omega_j-1}}{r+s}ds
+\sum_{k\ne j}\int_0^\infty\textup{e}^{-cns^{1/\omega_k}}s^{1/\omega_k-1}ds\right]^2rdr \nonumber \\
&\le c\int_0^{\delta_j}
\left[\int_0^\infty\frac{\textup{e}^{-cns^{1/\omega_j}}s^{1/\omega_j-1}}{r+s}ds \right]^2rdr \nonumber\\
&+c\int_0^{\delta_j}\left[\sum_{k\ne j}\int_0^\infty\textup{e}^{-cns^{1/\omega_k}}s^{1/\omega_k-1}ds\right]^2rdr.
\end{align}
Since
$$
\int_0^\infty\textup{e}^{-cns^{1/\omega_k}}s^{1/\omega_k-1}ds=\frac{\omega_k}{c\, n},
$$
it follows from (\ref{eq:deltaj}) that
\begin{equation}\label{eq:Tjn-le-1}
\int_0^{\delta_j}\left[\sum_{k\ne j}\int_0^\infty\textup{e}^{-cns^{1/\omega_k}}s^{1/\omega_k-1}ds\right]^2rdr
\le\frac{c}{n^2}\int_0^{\delta_j}rdr\le\frac{c}{n^{2(1+\omega_j)}}.
\end{equation}
Next by splitting the integral on $(0,\infty)$ into the two parts $(0,r)$ and $(r,\infty)$ we get that
\begin{align*}
\int_0^{\delta_j}\left[\int_0^\infty\frac{\textup{e}^{-cns^{1/\omega_j}}s^{1/\omega_j-1}}{r+s}ds\right]^2rdr
&\le
c\int_0^{\delta_j}\left[\int_0^r\frac{\textup{e}^{-cns^{1/\omega_j}}s^{1/\omega_j-1}}{r}ds \right]^2rdr\\
&+c\int_0^{\delta_j}\left[\int_r^\infty{\textup{e}^{-cns^{1/\omega_j}}s^{1/\omega_j-2}}ds \right]^2rdr.
\end{align*}
Now we use the estimate
$$
\int_0^r\textup{e}^{-cns^{1/\omega_j}}s^{1/\omega_j-1}\,ds=\frac{c}{n}(1-\textup{e}^{-cnr^{1/\omega_j}})
<c\,r^{1/\omega_j},
$$
and the result of Lemma~\ref{lem:final} to deduce, respectively,
\begin{equation}\label{eq:last}
\int_0^{\delta_j}\left[\int_0^r\frac{\textup{e}^{-cns^{1/\omega_j}}s^{1/\omega_j-1}}{r}ds \right]^2rdr
<c\,\delta_j^{2/\omega_j}=c\,\frac{1}{n^2}
\end{equation}
and
$$
\int_0^{\delta_j}\left[\int_r^\infty{\textup{e}^{-cns^{1/\omega_j}}s^{1/\omega_j-2}}ds \right]^2rdr
\le c\,\frac{1}{n^2}.
$$

Summing up we conclude from (\ref{eq:Tjn-le}) that
$$
T_j(n)\le c\,\frac{1}{n^2},\quad\text{for }j=1,\ldots,N,
$$
which, in view of (\ref{eq:J1=sumint}),  leads to the required estimate
\begin{equation}\label{eq:I1le1/n}
J_1(n)\le\frac{c}{n^2}.
\end{equation}

\medskip
\noindent
\textbf{(b)}
By using Cauchy's integral formula for the derivative in (\ref{eq:EnIntRep}) and arguing as in
Section~\ref{sec:proof-thm:HnOmgae} we obtain, for $z\in\Omega$, that
\begin{eqnarray}\label{eq:En+1Om2}
E^\prime_{n+1}(z)&=&\frac{1}{2\pi i}
   \int_{\Gamma^\prime}\frac{\Phi^{n+1}(\zeta)}{(\zeta-z)^2}\,d\zeta\nonumber\\
   &=& \frac{1}{2\pi i}\sum_j\int_{l_j^1\cup l_j^2}\frac{\Phi^{n+1}(\zeta)}{(\zeta-z)^2}\,d\zeta
     +O(\rho^n),
\end{eqnarray}
with $\rho:=\rho(\Gamma)<1$ independent of $z$.

Assume now that $z\in\Omega_2$ and $\zeta\in l_j^1\cup l_j^2$, $j=1,\ldots,N$. Then, the triangle inequality and Remark~\ref{rem:Lehman} (iv) imply that $|\zeta-z|\ge c\,|z-z_j|$.
Thus, by using (\ref{eq:InePhi}) we get
\begin{eqnarray*}
\left|\int_{l_j^1\cup l_j^2}\frac{\Phi^{n+1}(\zeta)}{(\zeta-z)^2}\,d\zeta\right|
\le\frac{c}{|z-z_j|^2}\int_0^\infty \textup{e}^{-cns^{1/\omega_j}}ds
=\frac{c\,\Gamma(\omega_j)}{|z-z_j|^2}\,\frac{1}{n^{\omega_j}}.
\end{eqnarray*}
This, in conjunction with (\ref{eq:En+1Om2}), leads to the estimate
$$
\int_{\Omega_2}|E^\prime_{n+1}(z)|^2dA(z)
\le c\,\sum_j\frac{1}{n^{2\omega_j}}\int_{\Omega_2}\frac{dA(z)}{|z-z_j|^4}.
$$

Finally, since $\Omega_2\subset\{z:|z-z_j|\ge\delta_j\}$, we have from (\ref{eq:deltaj}) that
\begin{align}\label{eq:cannotdobetter}
\int_{\Omega_{2}}|E^\prime_{n+1}(z)|^2dA(z)
&\le c\sum_j\frac{1}{n^{2\omega_j}}\int_{|z-z_j|>\delta_j}\frac{dA(z)}{|z-z_j|^4}\nonumber\\
&=c\sum_j\frac{1}{n^{2\omega_j}\,\delta_j^2}=c,
\end{align}
and this, in view of the definition of $J_2(n)$ in (\ref{eq:I1+I2}), yields the required estimate
\begin{equation}\label{eq:I2le1/n}
J_2(n)\le\frac{c}{n^2}.
\end{equation}
\qed
\begin{remark}\label{rem:deltaj}
It is interesting to note that the choice for $\delta_j$ given by (\ref{eq:deltaj}) keeps the estimates
(\ref{eq:I1le1/n}) and (\ref{eq:I2le1/n}) in balance, in the sense that any other choice for $\delta_j$
will result to a weaker estimate for the decay of $\|H_n\|_{L^2(\Omega)}$, as a comparison of (\ref{eq:I-CaseII})
and (\ref{eq:last}) with (\ref{eq:cannotdobetter}) shows.
\end{remark}

\section{Proof of the main theorems}\label{proofs-main}
\setcounter{equation}{0}

\begin{proof}[Proof of Theorem~\ref{thm:finelambdan}.]
The assumption of the theorem implies that $\Gamma$ is quasiconformal and rectifiable. Hence, by
comparing (\ref{eqinthm:finelambdan}) with (\ref{eq:alphabetaeps}) we see that
\begin{equation}\label{eq:albeep}
\alpha_n=\beta_n+\varepsilon_n
\end{equation}
and the result (\ref{eqinthm:finelambdanii}) emerges immediately in view of  Theorem~\ref{thm:betan}
and Corollary~\ref{cor:qn-1L2}.
\end{proof}

\begin{proof}[Proof of Theorem~\ref{thm:finepn}.]
Theorem~\ref{thm:finelambdan} implies that
\begin{equation}\label{eq:lam/gam}
\frac{\lambda_n}{\gamma^{n+1}}=\sqrt{\frac{n+1}{\pi}}\left\{1+\xi_n\right\},\quad n\in\mathbb{N},
\end{equation}
where
\begin{equation}\label{eq:lam/gam-2}
0\le\xi_n\le c_1(\Gamma)\,\frac{1}{n}.
\end{equation}
Therefore, from (\ref{eq:pnPhinPhip}) we have, for $z\in\Omega$, that
\begin{equation*}
p_n(z)=\sqrt{\frac{n+1}{\pi}}\,\Phi^n(z)\Phi^\prime(z)
\left\{1+\xi_n\right\}\left\{1+\frac{H_n(z)}{\Phi^n(z)\Phi^\prime(z)}
-\frac{q_{n-1}(z)}{\Phi^n(z)\Phi^\prime(z)}\right\},
\end{equation*}
which, in comparison with (\ref{eqinthm:finepn}), gives the following explicit expression for the error
$A_n(z)$:
\begin{equation}\label{eq:A_-esti}
A_n(z)=\xi_n+\left\{1+\xi_n\right\}\frac{1}{\Phi^\prime(z)}\left\{\frac{H_n(z)}{\Phi^n(z)}
-\frac{q_{n-1}(z)}{\Phi^n(z)}\right\}.
\end{equation}
The required result (\ref{eqinthm:finepnii1}) then emerges by using the  estimates
(\ref{eq:lam/gam-2}), (\ref{eq:HnOmega})  and (\ref{eq:qn-in-Omega}), for $\xi_n$,
$H_n(z)$ and $q_{n-1}(z)$, respectively.
\end{proof}

\begin{proof}[Proof of Theorem~\ref{thm:alphange}.]
Immediately from (\ref{eq:albeep}) and Theorem~\ref{lem:epsnge}, since $\beta_n\ge 0$.
\end{proof}

\section{Applications}\label{sec:appl}
\setcounter{equation}{0}
Strong asymptotics for orthogonal polynomials with respect to measures supported on the real line have played a crucial
role in the development of the theory of orthogonal polynomials in $\mathbb{R}$.
In order to argue that this would be the case for Bergman polynomials as well, we present in briefly a
number of applications based on the strong asymptotics of Section~\ref{section:intro}
and the associated theory developed in Sections~\ref{sec:faber}--\ref{proofs-pa}.

\subsection{Zeros of the Bergman polynomials}\label{subsec:zeros}
A well-known result of Fejer asserts that \textit{all the zeros of} $\{p_n(z)\}_{n\in\mathbb{N}}$, \textit{are contained on the convex hull} $\textup{Co}(\overline{G})$ of $\overline{G}$. This was refined by Saff~\cite{Sa90} to the interior of $\textup{Co}(\overline{G})$. To these it should be added a result of Widom \cite{Wi67} to the effect that, \textit{on any closed subset $B$ of $\Omega\cap\textup{Co}(\overline{G})$ and for any $n\in\mathbb{N}$, the number of zeros of $p_n(z)$ on $B$ is bounded independently of $n$}. This of course, doesn't preclude the possibility that, if $B\neq\emptyset$, then $p_n(z)$ has a zero on $B$, for every $n\in\mathbb{N}$. The next theorem, which is a simple consequence of Theorem~\ref{thm:finepn}, shows that, under an additional assumption on $\Gamma$, the zeros of the sequence $\{p_n(z)\}_{n\in\mathbb{N}}$ cannot be accumulated in $\Omega$.
\begin{theorem}\label{thn:zeros}
Assume that $\Gamma$ is piecewise analytic without cusps. Then, for any closed set $B\subset\Omega$, there exists $n_0\in\mathbb{N}$, such that for $n\ge n_0$,
$p_n(z)$ has no zeros on $B$.
\end{theorem}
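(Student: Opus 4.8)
The plan is to read off the zeros of $p_n$ inside $\Omega$ directly from the product representation in Theorem~\ref{thm:finepn}. Since $|\Phi(z)|>1$ and $\Phi^\prime(z)\ne0$ at every point of $\Omega$, the factor $\Phi^n(z)\Phi^\prime(z)$ never vanishes there, and hence the zeros of $p_n$ lying in $\Omega$ coincide with the zeros of $1+A_n(z)$. It therefore suffices to produce an $n_0$ such that $|A_n(z)|<1$ for all $z\in B$ and all $n\ge n_0$.

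First I would reduce to the case where $B$ is compact. By the theorem of Fej\'er recalled above, every zero of every $p_n$ lies in the fixed compact set $\textup{Co}(\overline G)$, so replacing $B$ by $B\cap\textup{Co}(\overline G)$ changes nothing and yields a closed and bounded subset of $\Omega$; this is the step that prevents zeros from ``escaping to infinity''. On such a $B$ the function $|\Phi|-1$ is continuous and strictly positive, so $\delta_0:=\min_{z\in B}\bigl(|\Phi(z)|-1\bigr)>0$, which encodes the fact that $B$ stays at a positive distance from $\Gamma$.

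With $\delta_0$ in hand the estimate is immediate: substituting $|\Phi(z)|-1\ge\delta_0$ into the bound (\ref{eqinthm:finepnii2}) gives
$$
|A_n(z)|\le\frac{c_4(\Gamma)}{\delta_0}\,\frac{1}{\sqrt n}+\frac{c_3(\Gamma)}{n},\qquad z\in B,
$$
whose right-hand side is independent of $z$ and tends to $0$ as $n\to\infty$. Choosing $n_0$ so that this quantity is $<1$ for $n\ge n_0$, we get $1+A_n(z)\ne0$, hence $p_n(z)\ne0$, for every $z\in B$ and every $n\ge n_0$. (One could equally use (\ref{eqinthm:finepnii1}) together with a lower bound for $|\Phi^\prime|$ on $B$, which exists since $\Phi^\prime$ is continuous and non-vanishing on the compact set $B$.)

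I do not anticipate a genuine obstacle here: once the product formula of Theorem~\ref{thm:finepn} is available, the argument is essentially a uniform smallness statement for $A_n$ on a compact set bounded away from $\Gamma$. The only point that calls for a word of justification is the reduction to compact $B$ — that is, ruling out zeros accumulating either at $\infty$ or toward $\Gamma$ from within $\Omega$ — and this is taken care of, respectively, by Fej\'er's theorem and by the openness of $\Omega$ (a closed subset of $\Omega$ disjoint from the compact curve $\Gamma$ is automatically bounded away from it).
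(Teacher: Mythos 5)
Your proof is correct and follows exactly the route the paper intends: Theorem~\ref{thn:zeros} is stated there as ``a simple consequence of Theorem~\ref{thm:finepn}'', and your argument --- localizing to the compact set $B\cap\textup{Co}(\overline G)$ via Fej\'er's theorem, bounding $|\Phi|-1$ below there, and using (\ref{eqinthm:finepnii2}) to force $|A_n|<1$ uniformly for large $n$ --- is precisely that deduction, with the reduction to compactness spelled out carefully. No gaps.
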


\subsection{Weak asymptotics}\label{subsec:weak}
The important class \textbf{Reg} of measures of orthogonality was introduced by Stahl and Totik in
\cite[Def.\ 3.1.2]{StTobo}. Since the area measure $dA$ on $G$ belongs to \textbf{Reg}, it follows that
\begin{equation}\label{eq:St-To:weak}
\lim_{n \to\infty}|p_n(z)|^{1/n}=|\Phi(z)|,
\end{equation}
locally uniformly in $\overline{\mathbb{C}}\setminus\textup{Co}(\overline{G})$; see \cite[Thm~3.1.1(ii)]{StTobo}.
The next theorem shows how this result  can be made more precise, under an additional assumption on the boundary.
\begin{theorem}
Assume that $\Gamma$ is piecewise analytic without cusps. Then,
$$
\lim_{n \to\infty}|p_n(z)|^{1/n}=|\Phi(z)|,
$$
locally uniformly in $\Omega$.
\end{theorem}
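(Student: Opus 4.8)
The plan is to read this off directly from the strong asymptotics of Theorem~\ref{thm:finepn} by extracting $n$-th roots. Fix a compact set $B\subset\Omega$. Since $\Phi$ maps $\Omega$ conformally onto $\Delta=\{|w|>1\}$ and $|\Phi(z)|\to\infty$ as $z\to\infty$, the function $z\mapsto|\Phi(z)|-1$ is continuous and strictly positive on $\Omega$, so there is $\delta=\delta(B)>0$ with $|\Phi(z)|-1\ge\delta$ for all $z\in B$. Hence, by the estimate (\ref{eqinthm:finepnii2}),
$$
\sup_{z\in B}|A_n(z)|\le\frac{c_4(\Gamma)}{\delta}\,\frac{1}{\sqrt n}+c_3(\Gamma)\,\frac1n\longrightarrow 0\quad(n\to\infty),
$$
so $A_n\to 0$ uniformly on $B$; in particular there is $n_0$ with $\tfrac12\le|1+A_n(z)|\le\tfrac32$ on $B$ for $n\ge n_0$.

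Next, from (\ref{eqinthm:finepn}) we have, for $z\in B$ and $n\ge n_0$,
$$
|p_n(z)|^{1/n}=\left(\frac{n+1}{\pi}\right)^{\!1/(2n)}|\Phi'(z)|^{1/n}\,|1+A_n(z)|^{1/n}\,|\Phi(z)|.
$$
The factor $\big((n+1)/\pi\big)^{1/(2n)}$ tends to $1$. The conformal map $\Phi$ extends conformally to $\infty$, so $\Phi'$ is continuous and non-vanishing on the compact set $B$; thus $0<m\le|\Phi'(z)|\le M<\infty$ there, whence $m^{1/n}\le|\Phi'(z)|^{1/n}\le M^{1/n}$, both $\to 1$ uniformly. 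Likewise $(\tfrac12)^{1/n}\le|1+A_n(z)|^{1/n}\le(\tfrac32)^{1/n}$ forces $|1+A_n(z)|^{1/n}\to 1$ uniformly on $B$. Multiplying the four factors gives $|p_n(z)|^{1/n}\to|\Phi(z)|$ uniformly on $B$, which is the asserted local uniform convergence in $\Omega$; since $\Omega\supset\overline{\mathbb C}\setminus\textup{Co}(\overline G)$, this is a genuine sharpening of (\ref{eq:St-To:weak}).

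The one point requiring care — and the only real obstacle — is that the error term $A_n(z)$ in Theorem~\ref{thm:finepn} blows up as $z\to\Gamma$, so the convergence cannot be expected to be uniform up to the boundary; restricting to a compact subset of $\Omega$, equivalently keeping $|\Phi(z)|-1$ bounded away from $0$, is precisely what makes the $n$-th roots of all the error factors tend to $1$. Allowing $B$ to reach a neighbourhood of $\infty$ causes no difficulty: there $|\Phi(z)|-1$ is large, $\Phi'$ stays bounded and bounded away from $0$, and the growth $|\Phi(z)|\to\infty$ is matched by $|p_n(z)|^{1/n}\asymp\lambda_n^{1/n}|z|$ with $\lambda_n^{1/n}\to\gamma$, a consequence of Theorem~\ref{thm:finelambdan}.
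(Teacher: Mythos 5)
Your proof is correct, but it follows a genuinely different route from the paper. The paper disposes of this theorem in one line: it feeds Theorem~\ref{thn:zeros} (for $n$ large, $p_n$ has no zeros on any fixed compact subset of $\Omega$) into the general potential-theoretic result \cite[Thm III.4.7]{ST}, which upgrades the $n$th-root asymptotics already available quasi-everywhere from the \textbf{Reg} property (\ref{eq:St-To:weak}) to locally uniform convergence on zero-free subregions of $\Omega$. You instead extract $n$th roots directly from the strong asymptotics (\ref{eqinthm:finepn})--(\ref{eqinthm:finepnii2}): on a compact $B\subset\Omega$ the error $A_n$ tends to $0$ uniformly because $|\Phi(z)|-1$ is bounded below, and the remaining factors $\bigl((n+1)/\pi\bigr)^{1/(2n)}$, $|\Phi'(z)|^{1/n}$ and $|1+A_n(z)|^{1/n}$ all tend to $1$ uniformly since $\Phi'$ is continuous and non-vanishing on $B$. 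Both arguments ultimately rest on Theorem~\ref{thm:finepn} (the paper's via Theorem~\ref{thn:zeros}, which is itself a consequence of it), so neither is circular; yours is self-contained and elementary, avoiding the external reference to \cite{ST}, while the paper's is shorter and displays the general principle that weak asymptotics plus absence of zeros already force locally uniform $n$th-root asymptotics. The only point needing care in your version is the interpretation of local uniformity at $\infty\in\Omega$, where both $|p_n(z)|^{1/n}$ and $|\Phi(z)|$ blow up; your closing remark handles this adequately.
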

\begin{proof}
At once, after utilizing Theorem~\ref{thn:zeros} into \cite[Thm III.4.7]{ST}.
\end{proof}
For an account on weak asymptotics for Bergman polynomials defined by a system of disjoint Jordan curves we refer to \cite[Prop.~3.1]{GPSS}.

\subsection{Ratio asymptotics}\label{sec:ratio}
The following two corollaries are simple consequences of Theorems~\ref{thm:finelambdan} and \ref{thm:finepn}.
\begin{corollary}\label{cor:ratioln}
Assume that $\Gamma$ is piecewise analytic without cusps. Then, for any $n\in\mathbb{N}$, it holds that
\begin{equation}\label{eq:ratioln1}
\sqrt{\frac{n+1}{n+2}}\frac{\lambda_{n+1}}{\lambda_n}=\gamma+\varsigma_n,
\end{equation}
where
\begin{equation}\label{eq:ratioln2}
|\varsigma_n|\le c_1(\Gamma)\,\frac{1}{n}.
\end{equation}
\end{corollary}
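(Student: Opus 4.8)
The plan is to read Corollary~\ref{cor:ratioln} off directly from Theorem~\ref{thm:finelambdan}. Since that theorem already packages the entire behaviour of $\lambda_n$ into the single scalar error $\alpha_n$, the ratio $\lambda_{n+1}/\lambda_n$ will involve only $\alpha_n$ and $\alpha_{n+1}$, with the geometric factors $\gamma^{n+1}$ telescoping away. Concretely, first I would invert (\ref{eqinthm:finelambdan}) to write, for every $n\in\mathbb{N}$,
\[
\frac{\lambda_n}{\gamma^{n+1}}=\sqrt{\frac{n+1}{\pi}}\,\frac{1}{\sqrt{1-\alpha_n}}
=\sqrt{\frac{n+1}{\pi}}\,\{1+\xi_n\},
\]
which is precisely the representation (\ref{eq:lam/gam})--(\ref{eq:lam/gam-2}) established in the proof of Theorem~\ref{thm:finepn}; in particular $0\le\xi_n\le c_1(\Gamma)/n$. (This last bound uses $0\le\alpha_n\le c_1(\Gamma)/n$ together with $\alpha_n<1$ from Remark~\ref{rem:beta-eps}, which keeps $\xi_n$ finite for the finitely many small indices, after enlarging the constant.)

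Next I would form the ratio of two consecutive instances and cancel the common power of $\gamma$:
\[
\sqrt{\frac{n+1}{n+2}}\,\frac{\lambda_{n+1}}{\lambda_n}
=\gamma\,\frac{1+\xi_{n+1}}{1+\xi_n}
=\gamma+\gamma\,\frac{\xi_{n+1}-\xi_n}{1+\xi_n},
\]
so that, comparing with (\ref{eq:ratioln1}), the error is $\varsigma_n=\gamma(\xi_{n+1}-\xi_n)/(1+\xi_n)$. Since $\xi_n\ge 0$ we have $1+\xi_n\ge 1$, whence
\[
|\varsigma_n|\le\gamma\,|\xi_{n+1}-\xi_n|\le\gamma\,(\xi_n+\xi_{n+1})
\le\gamma\Bigl(\frac{c_1(\Gamma)}{n}+\frac{c_1(\Gamma)}{n+1}\Bigr)\le\frac{2\gamma\,c_1(\Gamma)}{n},
\]
which is (\ref{eq:ratioln2}) after renaming the constant, because $\gamma=1/\textup{cap}(\Gamma)$ depends only on $\Gamma$.

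I do not expect any genuine obstacle here: the argument is elementary once Theorem~\ref{thm:finelambdan} is available, consisting only of solving (\ref{eqinthm:finelambdan}) for $\lambda_n$, telescoping the factor $\gamma^{n+1}$, and a one-line estimate of $|\xi_{n+1}-\xi_n|$ by $\xi_n+\xi_{n+1}$. The single point deserving a word of care is the passage from the $O(1/n)$ bound on $\alpha_n$ to the $O(1/n)$ bound on $\xi_n$ for the small values of $n$, where one falls back on the strict inequality $\alpha_n<1$ and absorbs the finitely many exceptional values into $c_1(\Gamma)$.
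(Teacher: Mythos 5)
Your proposal is correct and follows essentially the route the paper intends: the corollary is stated there as a ``simple consequence'' of Theorem~\ref{thm:finelambdan}, via exactly the representation (\ref{eq:lam/gam})--(\ref{eq:lam/gam-2}) that the paper sets up in the proof of Theorem~\ref{thm:finepn}. Your handling of the ratio, the bound $|\varsigma_n|\le\gamma(\xi_n+\xi_{n+1})$, and the remark about absorbing the finitely many small indices using $\alpha_n<1$ are all sound.
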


\begin{corollary}\label{cor:ratiopn}
Under the assumptions of Corollary \ref{cor:ratioln}, for any $z\in\Omega$ and sufficiently large $n\in\mathbb{N}$,
it holds that
\begin{equation}\label{eq:ratiopn1}
\sqrt{\frac{n+1}{n+2}}\frac{p_{n+1}(z)}{p_n(z)}=\Phi(z)\left\{1+B_n(z)\right\},
\end{equation}
where
\begin{equation}\label{eq:ratiopn2}
|B_n(z)|\le \frac{c_2(\Gamma)}{\dist(z,\Gamma)|\Phi^\prime(z)|}\,\frac{1}{\sqrt{n}}
+c_3(\Gamma)\,\frac{1}{n}.
\end{equation}
\end{corollary}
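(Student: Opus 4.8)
The plan is to divide the two instances of the asymptotic formula \eqref{eqinthm:finepn} of Theorem~\ref{thm:finepn}, one at level $n+1$ and one at level $n$, and to keep careful track of the error terms. Writing out \eqref{eqinthm:finepn} at both levels gives
\begin{equation*}
\frac{p_{n+1}(z)}{p_n(z)}=\sqrt{\frac{n+2}{n+1}}\,\Phi(z)\,\frac{1+A_{n+1}(z)}{1+A_n(z)},\quad z\in\Omega,
\end{equation*}
where the factors $\Phi^n(z)\Phi'(z)$ cancel against $\Phi^{n+1}(z)\Phi'(z)$ up to the single surviving factor $\Phi(z)$. Multiplying both sides by $\sqrt{(n+1)/(n+2)}$ puts the identity in the form \eqref{eq:ratiopn1} with
\begin{equation*}
1+B_n(z)=\frac{1+A_{n+1}(z)}{1+A_n(z)}.
\end{equation*}
Thus the whole task reduces to estimating $B_n(z)=\bigl(A_{n+1}(z)-A_n(z)\bigr)/\bigl(1+A_n(z)\bigr)$.

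Next I would bound the denominator away from zero for large $n$. By the estimate \eqref{eqinthm:finepnii1} for $A_n(z)$, together with the fact that the quantities $\dist(z,\Gamma)|\Phi'(z)|$ are bounded below on compact subsets of $\Omega$ (this is exactly why the conclusion is stated for ``sufficiently large $n$'' and pointwise in $z\in\Omega$), we have $|A_n(z)|\le \tfrac12$ once $n$ exceeds some threshold depending on $z$; hence $|1+A_n(z)|\ge \tfrac12$. For the numerator, both $|A_{n+1}(z)|$ and $|A_n(z)|$ are majorized by the right-hand side of \eqref{eqinthm:finepnii1}, and since $1/\sqrt{n+1}\le 1/\sqrt{n}$ and $1/(n+1)\le 1/n$, the triangle inequality gives
\begin{equation*}
|A_{n+1}(z)-A_n(z)|\le |A_{n+1}(z)|+|A_n(z)|\le \frac{2\,c_2(\Gamma)}{\dist(z,\Gamma)\,|\Phi'(z)|}\,\frac{1}{\sqrt{n}}+\frac{2\,c_3(\Gamma)}{n}.
\end{equation*}
Combining the two bounds yields $|B_n(z)|\le 2|A_{n+1}(z)-A_n(z)|$, which is of the asserted form \eqref{eq:ratiopn2} after absorbing the numerical factors into fresh constants $c_2(\Gamma),c_3(\Gamma)$.

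The only real subtlety — and the step I would flag as the main point to get right rather than the main obstacle — is the logical order: one cannot simply apply the crude bound $|B_n|\le 2|A_{n+1}-A_n|$ before knowing that $1+A_n(z)\ne 0$, and the threshold past which $|A_n(z)|\le\tfrac12$ genuinely depends on $z$ through the factor $1/(\dist(z,\Gamma)|\Phi'(z)|)$. This is harmless because the statement is local in $\Omega$ and for large $n$, but it is the reason the corollary is phrased the way it is. No quasiconformal or piecewise-analytic machinery is needed beyond what is already packaged in Theorem~\ref{thm:finepn}; the corollary is a direct arithmetic consequence.
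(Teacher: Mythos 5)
Your derivation is correct and is exactly the intended one: the paper states this corollary without proof as a ``simple consequence'' of Theorem~\ref{thm:finepn}, and dividing the two instances of \eqref{eqinthm:finepn}, writing $B_n=(A_{n+1}-A_n)/(1+A_n)$, and bounding the denominator away from zero for $n$ large (depending on $z$, which is why the statement is pointwise and asymptotic) is precisely that argument. Your remark on the logical order of the two bounds is a fair and accurate reading of why the corollary is phrased as it is.
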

\begin{remark}\label{rem:ratio}
The ratio asymptotics above are derived as consequence of Theorems~\ref{thm:finelambdan} and \ref{thm:finepn}.
Thus, we are obliged to assume that $\Gamma$ is piecewise analytic without cusps. Based, however, on substantial
numerical evidence (an instance is shown in Table~\ref{tab:cap-hypo} below) we believe that the ratio asymptotics hold,
in the sense that $\varsigma_n=o(1)$ and $B_n(z)=o(1)$, under weaker assumptions on $\Gamma$.
\end{remark}

\subsection{Stability of the Arnoldi GS for polynomials}\label{subsec:ArnoldiGS}
Let $\mu$ be a (non-trivial) finite Borel measure  supported on a compact (and infinite) subset $K$
of the complex plane, and let $\{p_n(z,\mu)\}_{n=0}^\infty$ denote the associated sequence of orthonormal polynomials
$$
p_n(z,\mu):=\lambda_n(\mu)z^n+\cdots,\quad \lambda_n(\mu)>0, \quad n=0,1,2,\ldots,
$$
generated by the inner product
$$
\langle f,g\rangle_\mu:=\int f(z)\overline{g(z)}d\mu(z).
$$

A standard way to construct the sequence $\{p_n(z,\mu)\}_{n=0}^\infty$, even to prove its existence and uniqueness,
is by using the Gram-Schmidt (GS) process. This process is designed to turn, in iterative fashion, any polynomial
sequence $\{P_n\}_{n=0}^\infty$ into an orthonormal sequence. The main ingredients in the computation are the complex
moments $\langle z^m,z^k\rangle_\mu$. The conventional to way apply the GS process is by choosing the monomials as the
starting up sequence, that is by setting $P_n(z)=z^n$.
Indeed, this was suggested (see, e.g., \cite[\S18.3--18.4]{He-V3}) and was eventually used (see, e.g., \cite{PW86})
by people working in Numerical Conformal Mapping, where the need for constructing orthonormal polynomials arises
from the application of the Bergman kernel method and its variants.

By the \textit{Arnoldi} GS we mean the application of the GS process in the following way:
At the $k$-step, where the orthonormal polynomial $p_k$ is to be constructed, use the polynomials
$\{p_0,p_1,\ldots,p_{k-1},zp_{k-1}\}$, rather than the monomials, as the starting up sequence.

Regarding the stability properties of the Arnoldi GS, we note that it is not difficult to show that
\begin{equation}\label{eq:insta-esti}
1\le I_n\le \|z\|_K\frac{\lambda_{n-1}^2(\mu)}{\lambda_n^2(\mu)},
\end{equation}
for the instability indicator
\begin{equation}\label{eq:In-def}
I_n:=\frac{\|P_n\|^2_{L^2{(G)}}}
{{\min_{P\in\text{span}(S_{n-1})}}\|P_n-P\|^2_{L^2{(G)}}}, \quad n\in\mathbb{N}
\end{equation}
introduced by Taylor in \cite{Ta78} for the purpose of measuring the
instability of the application of the GS process in orthonormalizing the set of polynomials
$S_n:=\{P_0,P_1,\ldots,P_n\}$.
Note that $I_n=1$, if $S_n$ is already an orthonormal set, while $I_n=\infty$, if $S_n$ is linearly depended.

In view of Corollary~\ref{cor:ratioln}, the estimate (\ref{eq:insta-esti}) implies that the Arnoldi GS process for
computing the Bergman polynomials of $G$ is stable, in the sense that the instability indicator $I_n$ does not increase
(in fact remains uniformly bounded)
with $n$. This is in sharp contrast with the
conventional GS, where $I_n$ \textit{increases geometrically fast} with $n$.
More specifically, the following estimate for the conventional GS was derived in \cite[Thm 3.1]{PW86}:
\begin{equation}\label{eq:insta-esti-PW}
c_4(\Gamma)L^{2n}\le I_n\le c_5(\Gamma)L^{2n},
\end{equation}
where $L:=\|z\|_\Gamma/\capGm$. Note that $L>1$, unless $G$ is a disk centered at the origin, where $L=1$.
For a comprehensive account on the damaging effects of the conventional GS process to the computation of
Bergman polynomials we refer to \cite{PW86}.

It is interesting to note that although Arnoldi's original paper \cite{Ar51} appeared in 1951, and the Arnoldi
implementation of the GS process was used in Numerical Linear Algebra since then, we first encountered
its implementation in connection with the computation of orthogonal polynomials much latter in \cite{GrRe-87},
where it was proposed for the computation of Szeg\H{o} polynomials without reference, however,
to its stability properties.

\subsection{Computation of $\Phi(z)$  and $\text{cap}(\Gamma)$}\label{sec:comp-cap}
Since $\textup{cap}(\Gamma)=b=1/\gamma$, Corollary~\ref{cor:ratioln}  provides the means for computing approximations to the capacity of $\Gamma$ by using only the leading coefficients of the Bergman polynomials.
Similarly, Corollary~\ref{cor:ratiopn} suggests a simple numerical method for computing approximations to the conformal
map $\Phi(z)$. This is quite appealing, in the sense that the Bergman polynomials, alone, suffice to provide approximations
to both interior conformal map $G\to\mathbb{D}$ (via the well-known Bergman kernel method) and exterior conformal
map $\Omega\to\Delta$, associated with the same Jordan curve. We refer to \cite{LySt} for the current state of the
convergence theory of the Bergman kernel method.
Regarding the exterior map we propose here the following approximation algorithm.

\medskip
\noindent {\tt Approximation of Capacities and Exterior Conformal Maps}
\begin{enumerate}[1.]
\itemsep=5pt
\item
Compute the complex moments
\begin{equation}\label{eq:mom}
\mu_{m,k}:=\langle z^m,z^k\rangle_G=\int_G z^m\overline{z}^k dA(z),\quad m,k=0,1,\ldots,n.
\end{equation}

\item
Employ the Arnoldi GS process
to construct the Bergman polynomials $\{p_k\}_{k=0}^n$ using the moments $\mu_{mk}$.
\item
Set
\begin{equation}\label{eq:approx-cap-Phi}
b^{(n)}:=\sqrt{\frac{n+1}{n}}\frac{\lambda_{n-1}}{\lambda_n}\quad
\text{and}\quad\Phi_n(z):=\sqrt{\frac{n}{n+1}}\frac{p_n(z)}{p_{n-1}(z)}.
\end{equation}
\item
Approximate $\textup{cap}(\Gamma)$ by $b^{(n)}$ and $\Phi(z)$ by $\Phi_n(z)$.
\end{enumerate}

\medskip

We demonstrate the performance of the above algorithm in the computation of capacities only.
We do so by presenting numerical results for two examples: (a) the canonical square with boundary $\Pi_4$, discussed in
Section~\ref{subsec:coef-esti} below, and (b) the 3-cusped hypocycloid with boundary $H_3$, defined by
(\ref{eq:Psi-hypom}) with $m=3$.
We note that $H_3$ does not satisfy the requirements of Corollary~\ref{cor:ratioln}.
The capacity of $\Pi_4$ is given explicitly in (\ref{eq:cap-square}),
while clearly, $\text{cap}(H_3)=1$. In both cases the complex moments are known explicitly.
The details of the presentation are as follows:

Let $t_n$ denote the error in approximating the capacity, i.e.,
\begin{equation}\label{eq:tn-dfn}
t_n:=b^{(n)}-\capGm.
\end{equation}
Since $\capGm=b$, it follows from Corollary~\ref{cor:ratioln} that
\begin{equation}\label{eq:tn-decay}
|t_n|\le c(\Gamma)\frac{1}{n},\quad n\in\mathbb{N}.
\end{equation}
In Tables~\ref{tab:cap-square}--\ref{tab:cap-hypo} we report the computed values of $b^{(n)}$ and $t_n$,
with $n$ varying from $100$ to $400$.
We also report the values of the parameter $s$, which is designed to test the hypothesis that
$|t_n|\approx 1/n^s$. All computations presented in this paper were carried out on a desktop PC, using the computing
environment MAPLE in high precision. Thus, in view of the stability properties of the Arnoldi GS process discussed in
Section~\ref{subsec:ArnoldiGS}, we expect all the figures quoted in the tables to be correct.

The numbers listed on the tables show that the proposed algorithm constitutes a valid method for computing
capacities. Is is interesting to note that in both cases the presented values of $b^{(n)}$ decay monotonically
to the capacity.
Also, the values of the parameter $s$ indicate clearly that for the case of the square $|t_n|\approx 1/n^2$. This
behaviour  can
be explained if $\alpha_n\approx 1/n$, for the strong asymptotic error of the leading coefficient.
For the case of the cusped hypocycloid however, no safe conclusions can be drawn for the behaviour of $t_n$ from the
reported values on Table~\ref{tab:cap-hypo}.

\begin{table}
\begin{tabular}{|c|c|c|c|}
\hline $ $
$n$& $b^{(n)}$ & $t_n$ & $s$  ${\vphantom{\sum^{\sum^{\sum^N}}}}$ \\*[3pt] \hline
100 & 0.834\,640\,612 & 1.37e-05  & -      \\
110 & 0.834\,638\,233 & 1.14e-05  & 1.9902     \\
120 & 0.834\,636\,420 & 9.58e-06  & 1.9911      \\
130 & 0.834\,635\,009 & 8.16e-06  & 1.9918      \\
140 & 0.834\,633\,888 & 7.04e-06  & 1.9924      \\
150 & 0.834\,632\,982 & 6.14e-06  & 1.9930      \\
160 & 0.834\,632\,341 & 5.39e-06  & 1.9934      \\
170 & 0.834\,631\,626 & 4.78e-06  & 1.9938      \\
180 & 0.834\,631\,111 & 4.26e-06  & 1.9942     \\
190 & 0.834\,630\,674 & 3.83e-06  & 1.9945      \\
200 & 0.834\,630\,301 & 3.46e-06  & 1.9949      \\
 \hline
\end{tabular}

\medskip
\caption{Square: The approximation $b^{(n)}$ of $\text{cap}(\Pi_4)=0.834\,626\,841\cdots$.}
\label{tab:cap-square}
\end{table}

\begin{table}
\begin{tabular}{|c|c|c|c|}
\hline $ $
$n$& $b^{(n)}$ & $t_n$ & $s$  ${\vphantom{\sum^{\sum^{\sum^N}}}}$ \\*[3pt] \hline
300 & 1.000\,117\,809 & 1.17e-04  & -      \\
310 & 1.000\,112\,347 & 1.12e-04  & 1.447   \\
320 & 1.000\,107\,296 & 1.07e-04  & 1.448  \\
330 & 1.000\,102\,615 & 1.02e-04  & 1.449  \\
340 & 1.000\,098\,267 & 9.82e-04  & 1.449  \\
350 & 1.000\,094\,219 & 9.42e-05  & 1.450  \\
360 & 1.000\,090\,443 & 9.04e-05  & 1.451  \\
370 & 1.000\,086\,914 & 8.69e-05  & 1.452  \\
380 & 1.000\,083\,610 & 8.36e-05  & 1.453  \\
390 & 1.000\,080\,511 & 8.05e-05  & 1.454  \\
400 & 1.000\,077\,600 & 7.76e-05  & 1.455  \\
\hline
\end{tabular}

\medskip
\caption{Hypocycloid: The approximation $b^{(n)}$ of $\text{cap}(\Pi_4)=1$.}
\label{tab:cap-hypo}
\end{table}

Based on  the important applications of the ratio asymptotics outlined above (see also Section~\ref{sec:ftrr})
we reckon that the solution of the following
problem will be of significance in developing further the theory of orthogonal polynomials in the complex plane.
\begin{problem}
Characterize all the measures of orthogonality $\mu$, with $\text{supp}(\mu)=K$, for which it holds:
\begin{equation}\label{eq:Ratdef}
\lim_{n\to\infty}\frac{\lambda_{n+1}(\mu)}{\lambda_n(\mu)}=\frac{1}{\textup{cap}(K)}.
\end{equation}
\end{problem}
Since the property $\mu\in\,$\textbf{Reg} is equivalent to
\begin{equation}\label{eq:Regdef}
\lim_{n\to\infty}\lambda_n^{1/n}(\mu)=\frac{1}{\textup{cap}(K)};
\end{equation}
see \cite[Thm 3.1.1]{StTobo}, it follows that the measures satisfying (\ref{eq:Ratdef}) form a subclass of
\textbf{Reg}.
We note, however, that there are known instances where the limit points of the sequence
$\{\lambda_{n+1}(\mu)/\lambda_n(\mu)\}_{n\in\mathbb{N}}$ constitute a finite set, as in the case of Bergman polynomials defined on a
system of disjoint symmetric lemniscates (see \cite[\S 7]{GPSS}), or where they fill up a whole interval, as in the case
of Szeg\H{o} polynomials defined on a system of disjoint smooth Jordan curves (see \cite[Thm 9.2]{Wi69}).

\subsection{Finite recurrence relations and Dirichlet problems}\label{sec:ftrr}
\begin{definition}\label{def:ftrr}
We say that the  polynomials $\{p_n\}_{n=0}^\infty$ satisfy an $(M+1)$-\textit{term recurrence relation}, if for any $n\geq M-1$,
$$
zp_n(z)=a_{n+1,n}p_{n+1}(z) + a_{n, n} p_n(z) + \cdots +
a_{n-M+1, n} p_{n-M+1}(z).
$$
\end{definition}
A direct application of the ratio asymptotics for $\{p_n\}_{n\in\mathbb{N}}$, given by Corollary~\ref{cor:ratiopn}, leads to the next two theorems. These refine, respectively, Theorems~2.2 and 2.1 of \cite{KhSt}, in the sense that they weaken the $C^2$-smoothness assumption on $\Gamma$. For their proof, it is sufficient to note that: (a) the two theorems are equivalent to each other and (b) the reason for assuming that $\Gamma$ is  $C^2$-smooth in Theorem~2.2 of \cite{KhSt} was to ensure the ratio asymptotics of the Bergman polynomials; see \cite[\S 4 Rem.~(i)]{KhSt}.
\begin{theorem}\label{thm:ftrr}
Assume that $\Gamma$ is piecewise analytic without cusps. If the Bergman polynomials $\{p_n\}_{n=0}^\infty$ satisfy an $(M+1)$-term recurrence relation, with some $M\ge 2$, then $M=2$ and $\Gamma$ is an ellipse.
\end{theorem}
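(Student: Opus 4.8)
The plan is to deduce the statement from Theorem~2.2 of \cite{KhSt} by checking that its proof uses nothing about $\Gamma$ beyond the ratio asymptotics of the Bergman polynomials, which are now available under the weaker hypothesis via Corollary~\ref{cor:ratiopn}. Thus the whole proof amounts to substituting Corollaries~\ref{cor:ratioln}--\ref{cor:ratiopn} for the $C^2$-smoothness assumption made in \cite{KhSt}.

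First I would recall that in \cite{KhSt} the $C^2$-smoothness of $\Gamma$ is invoked for one purpose only, namely to guarantee the ratio limit
$$
\sqrt{\tfrac{n+1}{n+2}}\,\frac{p_{n+1}(z)}{p_n(z)}\longrightarrow\Phi(z)\qquad (n\to\infty),
$$
locally uniformly on $\Omega$; see \cite[\S 4 Rem.~(i)]{KhSt}. Under our assumption that $\Gamma$ is piecewise analytic without cusps, this is exactly Corollary~\ref{cor:ratiopn}: for fixed $z\in\Omega$ one has $\sqrt{(n+1)/(n+2)}\,p_{n+1}(z)/p_n(z)=\Phi(z)\{1+B_n(z)\}$ with $|B_n(z)|\le c_2(\Gamma)\,\dist(z,\Gamma)^{-1}|\Phi'(z)|^{-1}n^{-1/2}+c_3(\Gamma)\,n^{-1}$, and on any compact subset of $\Omega$ both $\dist(z,\Gamma)$ and $|\Phi'(z)|$ are bounded away from zero, so $B_n\to 0$ uniformly there. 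Corollary~\ref{cor:ratioln} likewise supplies the companion limit $\sqrt{(n+1)/(n+2)}\,\lambda_{n+1}/\lambda_n\to\gamma$ needed to control the recurrence coefficients.

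Second, with this input I would run the argument of \cite{KhSt} unchanged. Assuming $\{p_n\}$ satisfies an $(M+1)$-term recurrence $zp_n(z)=\sum_{j=n-M+1}^{n+1}a_{j,n}p_j(z)$ for all $n\ge M-1$ with $M\ge 2$, the coefficients $a_{j,n}$ are bounded; dividing by $p_n(z)$ on a compact subset of $\Omega$ and letting $n\to\infty$ (along a subsequence making the coefficient vectors converge, which is then seen to be the full limit) turns the recurrence into an algebraic identity expressing $z$ as a fixed Laurent polynomial in $\Phi(z)$ of bounded degree; equivalently, $\Psi$ is itself a Laurent polynomial on $\Delta$. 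The univalence of $\Psi$ on $\Delta$, together with the minimality of $M$ and the \emph{exact} (not merely asymptotic) validity of the recurrence for all $n\ge M-1$, then forces the recurrence to be three-term, $M=2$, and $\Psi(w)=bw+b_0+b_1/w$, i.e.\ $\Gamma$ an ellipse. (The converse, that the Bergman polynomials of an ellipse obey a three-term recurrence, is classical, so the statement is sharp.) Finally, since Theorem~\ref{thm:ftrr} is equivalent to its Dirichlet-problem counterpart Theorem~\ref{thm:DP}, no separate argument is needed for the latter.

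The only genuine obstacle is bookkeeping: one must confirm that smoothness of $\Gamma$ enters \cite[Thm~2.2]{KhSt} \emph{solely} through the ratio asymptotics — in particular, that the passage ``limiting recurrence $\Rightarrow$ $\Psi$ a short Laurent polynomial $\Rightarrow$ ellipse'' rests only on the algebra of the recurrence and the weak/ratio asymptotics, and not on any regularity of $\Phi$ up to $\Gamma$ itself. In view of the explicit remark in \cite{KhSt} this is routine, so the proof indeed reduces to citing Corollary~\ref{cor:ratiopn} in place of the $C^2$ hypothesis.
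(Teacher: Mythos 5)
Your proposal is correct and is essentially the paper's own argument: the paper likewise proves Theorem~\ref{thm:ftrr} by observing that the $C^2$-smoothness hypothesis in Theorem~2.2 of \cite{KhSt} served only to guarantee the ratio asymptotics of the Bergman polynomials, which Corollary~\ref{cor:ratiopn} now supplies under the weaker assumption that $\Gamma$ is piecewise analytic without cusps, together with the equivalence to the Dirichlet-problem formulation. Your additional sketch of the internal mechanics of the \cite{KhSt} argument goes beyond what the paper records but does not change the route.
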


\begin{theorem}\label{thm:DP}
Let $G$ be a bounded simply-connected domain with Jordan boundary $\Gamma$, which is piecewise analytic without cusps. Assume that there exists a positive integer $M:=M(G)$ with the property that the Dirichlet problem
\begin{equation}\label{eq:DP}
\left\{
\begin{alignedat}{2}
&\Delta u=0\quad &&\text{in} \ \ G, \\
&u=\overline{z}^mz^n\quad&&\text{on}\ \ \Gamma,
\end{alignedat}
\right.
\end{equation}
has a polynomial solution of degree $\le m(M-1)+n$ in $z$ and of degree $\le n(M-1)+m$ in $\overline{z}$, for all positive integers $m$ and $n$. Then $\Gamma$ is an ellipse and $M=2$.
\end{theorem}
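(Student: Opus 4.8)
The plan is to reduce Theorem~\ref{thm:DP} to Theorem~\ref{thm:ftrr} and then to reprove Theorem~\ref{thm:ftrr} by the method of \cite{KhSt}, the only new ingredient being the ratio asymptotics that are now available under the weaker hypothesis on $\Gamma$.

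First I would recall that Theorems~\ref{thm:ftrr} and \ref{thm:DP} are \emph{equivalent}. Indeed, the existence, for every pair $m,n$, of a polynomial solution of the Dirichlet problem \eqref{eq:DP} of degree $\le m(M-1)+n$ in $z$ and $\le n(M-1)+m$ in $\overline{z}$ is an algebraic recasting of the statement that the matrix of the multiplication operator $f\mapsto zf$ on $L_a^2(G)$, written in the orthonormal basis $\{p_n\}_{n=0}^\infty$, is banded with bandwidth controlled by $M$ --- equivalently, that $\{p_n\}$ satisfies an $(M+1)$-term recurrence in the sense of Definition~\ref{def:ftrr}. This correspondence rests only on expanding harmonic polynomials as $P(z)+\overline{Q(z)}$ and on the reproducing property of $\{p_n\}$; it requires nothing of $\Gamma$ beyond being a Jordan curve, and it is precisely the equivalence that underlies the passage between Theorems~2.1 and 2.2 of \cite{KhSt}, whose proof transcribes without change. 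Thus it is enough to prove Theorem~\ref{thm:ftrr}.

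To prove Theorem~\ref{thm:ftrr} I would transcribe the proof of \cite[Thm~2.2]{KhSt}. That proof starts from the assumed relation $zp_n=\sum_{k=n-M+1}^{n+1}a_{k,n}p_k$, valid for every $n\ge M-1$, passes to limits of the coefficients $a_{k,n}$ and of the ratios $p_{n+1}(z)/p_n(z)$ as $n\to\infty$, and combines the resulting identities with the exactness of the band at every level $n$ (not merely asymptotically) to force $M=2$ and $\Psi$ to be a Joukowski-type rational map, i.e.\ $\Gamma$ an ellipse. The sole place where $C^2$-smoothness of $\Gamma$ enters that argument is in guaranteeing the ratio asymptotics $\lambda_{n+1}/\lambda_n\to 1/\capGm$ and $p_{n+1}(z)/p_n(z)\to\Phi(z)$, locally uniformly in $\Omega$; see \cite[\S4, Rem.~(i)]{KhSt}. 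Under our hypothesis these are supplied instead by Corollaries~\ref{cor:ratioln} and \ref{cor:ratiopn}, which hold for $\Gamma$ piecewise analytic without cusps --- a class which, as recalled in Section~\ref{section:intro}, consists of rectifiable and quasiconformal curves, so that all the auxiliary facts used in \cite{KhSt} (completeness of $\{p_n\}$ in $L_a^2(G)$, the Fejer--Saff--Widom localization of the zeros of $p_n$, the boundary behaviour of $\Phi$ and $\Psi$) remain available.

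The main obstacle is verification rather than invention: one must go carefully through the proofs of \cite[Thms~2.1 and 2.2]{KhSt} and confirm that $C^2$-smoothness is genuinely used only to invoke the ratio asymptotics, and that every remaining step tolerates the weaker regularity. In particular one should check that the convergence of the off-diagonal coefficients $a_{k,n}$ for $n-k$ fixed --- not merely of the leading ratios --- is already a consequence of Corollaries~\ref{cor:ratioln} and \ref{cor:ratiopn}, and that no estimate buried in \cite{KhSt} implicitly relies on smoothness of $\Gamma$ in some other way. Once this audit is complete, Theorem~\ref{thm:ftrr}, and hence Theorem~\ref{thm:DP}, follows.
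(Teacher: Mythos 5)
Your proposal follows exactly the route the paper takes: it notes that Theorems~\ref{thm:ftrr} and \ref{thm:DP} are equivalent (an equivalence going back to \cite{PuSt} and underlying the passage between Theorems~2.1 and 2.2 of \cite{KhSt}), and that the only role of $C^2$-smoothness in the argument of \cite{KhSt} is to supply the ratio asymptotics, which Corollaries~\ref{cor:ratioln} and \ref{cor:ratiopn} now provide for $\Gamma$ piecewise analytic without cusps. This matches the paper's proof, which consists of precisely these two observations.
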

Theorem~\ref{thm:DP} confirms a special case of the so-called Khavinson and Shapiro conjecture; see \cite{KL} for results reporting on the recent progress in this direction. We note that the equivalence between
the two properties \lq\lq the Bergman polynomials of $G$ satisfy a finite-term recurrence relation" and \lq\lq any Dirichlet problem in $G$, with polynomial data, possesses a polynomial solution" was first established in \cite{PuSt}.

\subsection{Shape recovery from partial measurements}\label{subsec:shape-rec}
Given a finite $n+1\times n+1$ section
\begin{equation}\label{eq:trun-mom}
[\mu_{m,k}]_{m,k=0}^n,\quad\mu_{m,k}:=\int_G z^m\overline{z}^k
dA(z),
\end{equation}
of the infinite complex moment matrix $[\mu_{m,k}]_{m,k=0}^\infty$,
associated with a bounded Jordan domain $G$, the \textit{Truncated Moments Problem} consists of computing an
approximation $\Gamma_n$ to its boundary $\Gamma$, by using only the data provided by (\ref{eq:trun-mom}).
Regarding existence and uniqueness, we note a result of Davis and Pollak \cite{DaPo} stating that the infinite matrix
$[\mu_{m,k}]_{m,k=0}^\infty$ defines uniquely the curve $\Gamma$.
Corollary~\ref{cor:ratiopn} and the discussion in Section~\ref{subsec:ArnoldiGS}, regarding the stability
of the Arnoldi GS process, suggest the following algorithm:

\medskip
\noindent {\tt Reconstruction from Moments Algorithm}
\begin{enumerate}[1.]
\itemsep=5pt
\item
Use the Arnoldi GS process to construct the Bergman polynomials $\{p_k\}_{k=0}^n$
from the given complex moments $\mu_{mk}$, $m,k=0,1,\ldots,n$.
\item
Compute the coefficients of the Laurent series expansion of the ratio
\begin{equation}\label{LauSerP}
\Phi_n(z):=\sqrt{\frac{n}{n+1}}\frac{p_n(z)}{p_{n-1}(z)}=\gamma^{(n)}z+\gamma_0^{(n)}+\frac{\gamma_1^{(n)}}{z}+
\frac{\gamma_2^{(n)}}{z^2}
+\cdots.
\end{equation}
\item
Revert the series (\ref{LauSerP}) using the explicit method described in \cite[p. 764]{Fa-Ol99}.
This leads to:
$$
b^{(n)}:=1/\gamma^{(n)}=\sqrt{\frac{n+1}{n}}\frac{\lambda_{n-1}}{\lambda_n},\quad
b_0^{(n)}:=-b^{(n)}\gamma_0^{(n)}/\gamma^{(n)}
$$
and
$$
\Psi_n(w):=b^{(n)}w+b_0^{(n)}+\frac{b_1^{(n)}}{w}+
\frac{b_2^{(n)}}{w^2}+\frac{b_3^{(n)}}{w^3}+\cdots+\frac{b_n^{(n)}}{w^n},
$$
where $-k\, b_k^{(n)}/b^{(n)}$, $k=1,2,\ldots,n$, is the coefficient of $1/z$ in the Laurent series expansion of $\left[\Phi_n(z)/\gamma^{(n)}\right]^k$ about infinity.
\item
Approximate $\Gamma$ by
$\Gamma_n:=\{z:\,z=\Psi_n(e^{it}),\,t\in[0,2\pi]\,\}$.
\end{enumerate}

\medskip

For applications to the 2D image reconstruction arising from tomographic data we refer to \cite{GPSS}.
Here we highlight the performance of the reconstruction algorithm by applying it to the recovery of three shapes,
where the defining  curves come from different classes: one analytic, one with corners and
one with cusps, for which the theory of Section~\ref{sec:ratio} does not apply.
In each case we start by computing a finite set of complex moments and then follow the four steps of the algorithm.
We note that in all three examples the complex moments are known explicitly.

In Figures~\ref{fig:ellipse}--\ref{fig:cupsed-hypo} we depict the computed approximation $\Gamma_n$
against the original curve $\Gamma$. Note that in the first two plots the fitting of the two curves
is not far from being perfect. Even in the cusped case, pictured in Figure~\ref{fig:cupsed-hypo}, the fitting is
remarkably close, despite the low degree of the moment matrix used.

\begin{figure}
\begin{center}
\includegraphics[scale=0.40]{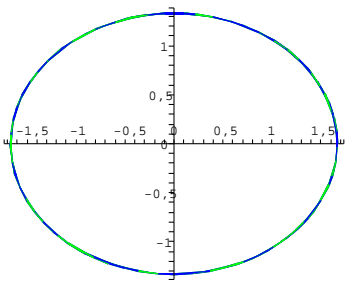}
\caption{Recovery of an  ellipse, with $n=3$}
\label{fig:ellipse}
\end{center}
\end{figure}

In Figure~\ref{fig:ellipse} we illustrate the reconstruction of an ellipse by using only the first 16 moments in
(\ref{eq:trun-mom}), i.e., by taking $n=3$.

\begin{figure}
\begin{center}
\includegraphics[width=0.35\linewidth]{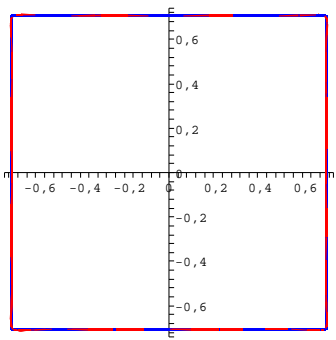}
\caption{Recovery of a  square, with $n=16$.}
\label{fig:square}
\end{center}
\end{figure}
In Figure~\ref{fig:square} we reconstruct a square by using the complex moments up to the degree $16$. We have chosen
$n=16$, so that the result can be compared with the recovery of a square, as shown on page 1067 of
\cite{GHMP} obtained using the \textit{Exponential Transform Algorithm} of the opus cited.
This is another
reconstruction algorithm based on moments. Of course, for concluding results regarding the comparison of the two
algorithms more experiments need to be contacted.

\begin{figure}
\begin{center}
\begin{minipage}[t]{1.5cm}
{\includegraphics[scale=0.40]{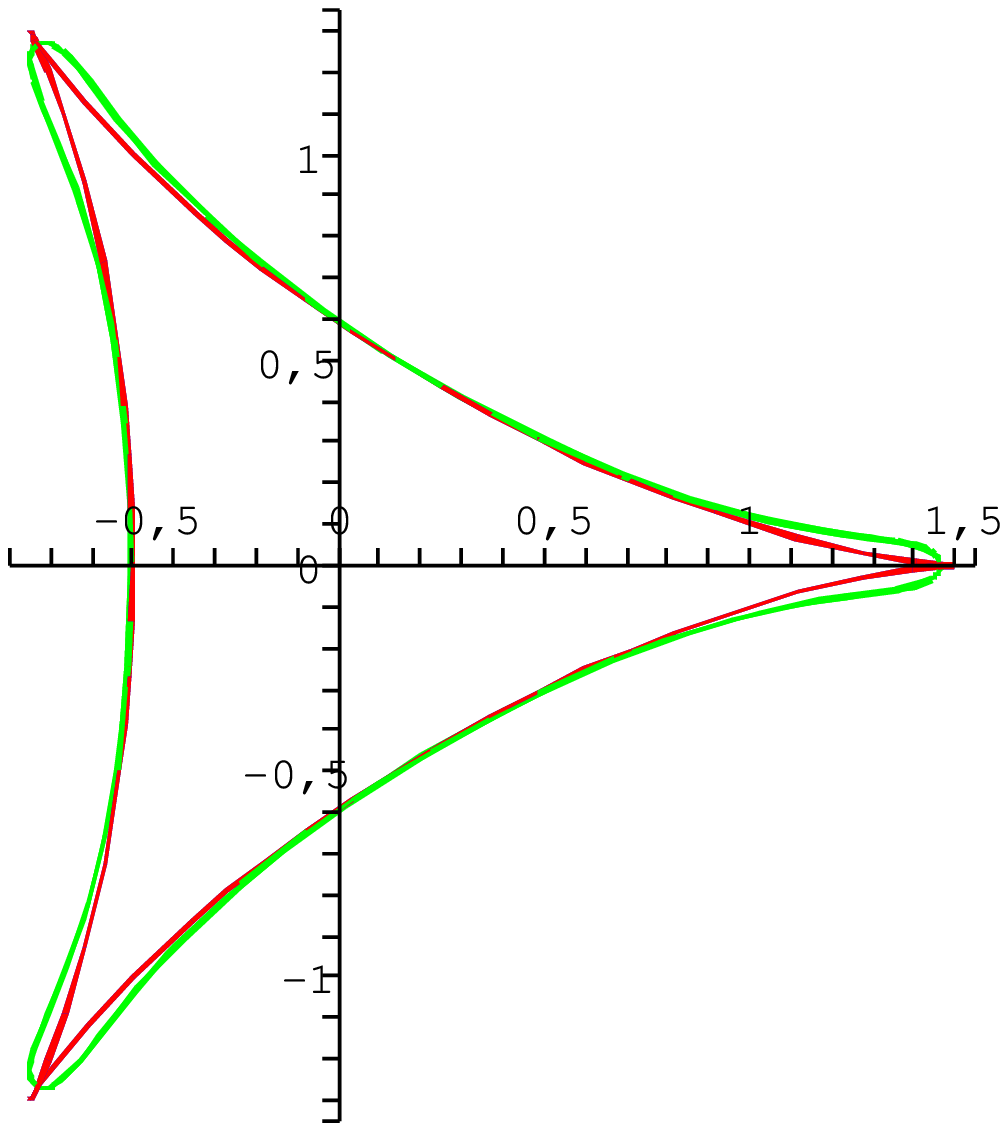}}
\end{minipage}
\qquad\qquad\qquad\qquad\qquad
\begin{minipage}[t]{1.5cm}
{\includegraphics[scale=0.40]{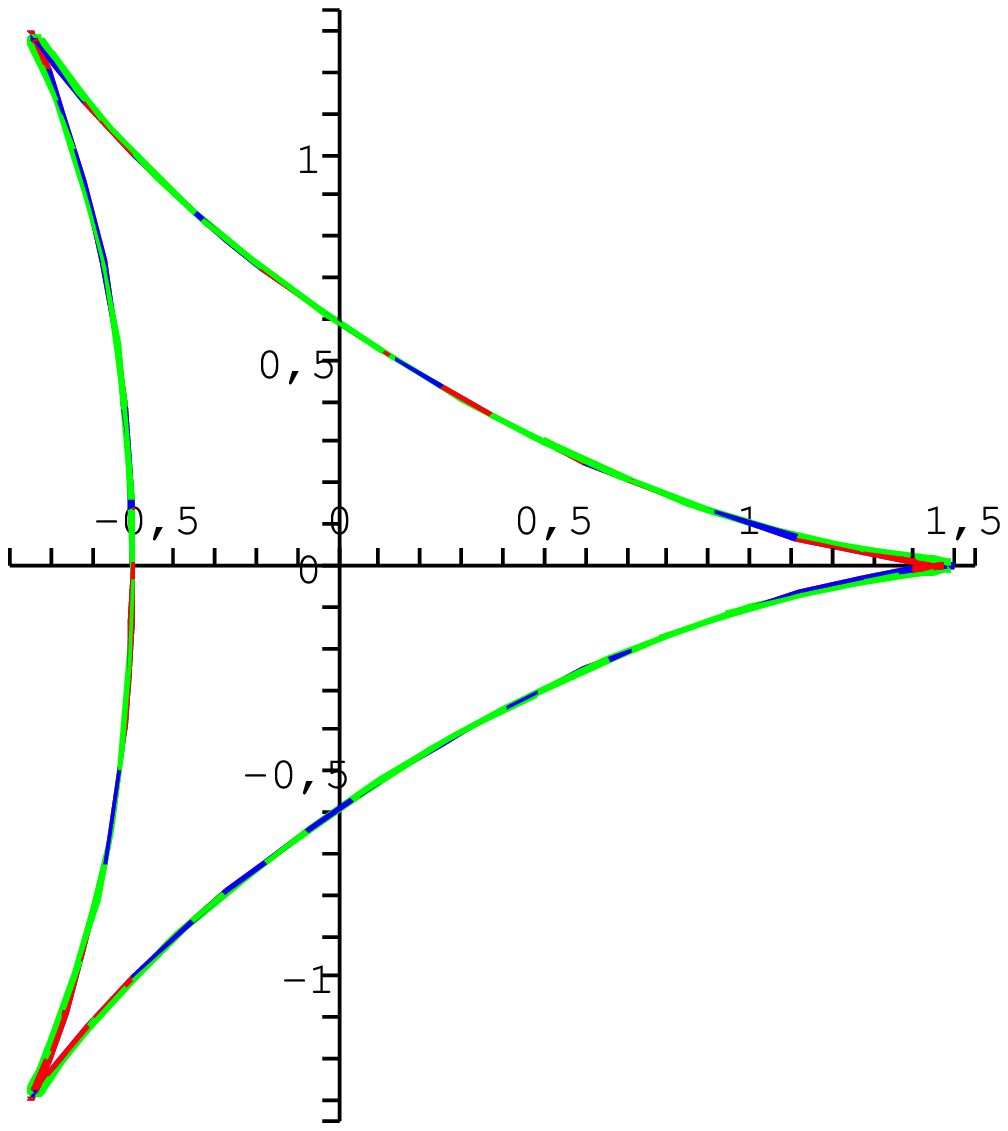}}
\end{minipage}
\caption{Recovery of a 3-cusped hypocycloid, with $n=10$ (left) and $n=20$ (right).}
\label{fig:cupsed-hypo}
\end{center}
\end{figure}

In order to show that our reconstruction algorithm works equally well for domains where the theory above does not apply,
we use it for the recovery of the boundary $H_3$ of the 3-cusped hypocycloid defined by (\ref{eq:Psi-hypom}) with $m=3$.
The application of the algorithm with $n=10$ and $n=20$ is depicted in Figure~\ref{fig:cupsed-hypo}.

Concluding, we note that the above algorithm is not suited for reconstructing unions of disjoint Jordan domains, in
contrast to the \textit{Archipelagos Reconstruction Algorithm} of \cite{GPSS}. At the other hand,
the simplicity of the construction and the proximity of the two curves $\Gamma_n$ and $\Gamma$, shown
in the figures, support that the proposed algorithm is more efficient when it comes to recovering single Jordan
domains.

\subsection{Coefficient estimates}\label{subsec:coef-esti}
We recall the expansion (\ref{eq:Psi}) of the inverse conformal mapping $\Psi:\Delta\to\Omega$ and note that $\Psi(w)/b$
belongs to the well-known class $\Sigma$ of univalent functions; see, e.g., \cite{Po75} and \cite{Du83}.

The following result settles, in a certain sense, the associated coefficient problem for an important subclass of
$\Sigma$. We refer to \cite[\S 4.9]{Du83} for a comprehensive discussion of the coefficient problem for other subclasses
of $\Sigma$.
\begin{theorem}\label{th:bn-decay}
Assume that $\Gamma$ is piecewise analytic without cusps and let $\omega\pi$, $0<\omega<2,$ denote its smallest
exterior angle. Then, there holds that
\begin{equation}\label{eq:bn-decay-1}
|b_n|\le c_1(\Gamma)\frac{1}{n^{1+\omega}}, \quad n\in\mathbb{N},
\end{equation}
and the order $1+\omega$ of $1/n$ is sharp in the sense that for certain $\omega$, there exists a Jordan curve $\Gamma$ of the same class, such that
\begin{equation}\label{eq:bn-decay-2}
|b_n|\ge c_2(\Gamma)\frac{1}{n^{1+\omega}}, \quad\text{for infinitely many } n.
\end{equation}
\end{theorem}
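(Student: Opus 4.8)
The plan is to prove the two halves of the theorem separately. The upper bound (\ref{eq:bn-decay-1}) will follow from a contour-deformation argument localised at the corners of $\Gamma$, of the same flavour as the proofs in Section~\ref{proofs-pa}; the sharpness (\ref{eq:bn-decay-2}) will be obtained by exhibiting an explicit model domain whose exterior map has, on the unit circle, algebraic branch points of exponent $\omega$, and reading off the coefficient asymptotics by singularity analysis.

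\textbf{Upper bound.} The starting point is the identity
\[
-(n+1)\,b_{n+1}=\frac{1}{2\pi i}\int_{L_R}\Phi^{n+1}(z)\,dz,
\]
established in the course of the proof of Theorem~\ref{lem:epsnge} (see (\ref{eq:c_1-2})). Using the Schwarz reflection of $\Phi$ across $\Gamma$ introduced at the beginning of Section~\ref{proofs-pa}, I would deform $L_R$ (equivalently $\Gamma$) inward to the piecewise analytic curve $\Gamma^\prime$; this is legitimate exactly as there, since $\Phi^{n+1}$ is bounded and analytic on the region swept out and the corner points $z_j$, which lie on $\Gamma^\prime$, cause no trouble for the bounded integrand. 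Splitting $\Gamma^\prime=l\cup\tau$ as in Section~\ref{proofs-pa}, the part $\tau$ contributes $O(\rho^{\,n})$ with $\rho:=\rho(\Gamma)<1$, and on each segment $l_j^i$ it suffices to invoke part (iii) of Remark~\ref{rem:Lehman}: with $s$ the arclength along $l_j^i$ measured from $z_j$,
\[
|\Phi(\zeta)|^{n+1}\le\bigl(1-c\,s^{1/\omega_j}\bigr)^{n+1}<\textup{e}^{-c(n+1)s^{1/\omega_j}},
\]
whence, after the substitution $t=s^{1/\omega_j}$,
\[
\left|\int_{l_j^i}\Phi^{n+1}(\zeta)\,d\zeta\right|
\le\int_0^\infty\textup{e}^{-c(n+1)s^{1/\omega_j}}\,ds
=\frac{\omega_j\,\Gamma(\omega_j)}{c^{\,\omega_j}}\,\frac{1}{(n+1)^{\omega_j}}
=O\!\left(\frac{1}{n^{\omega_j}}\right).
\]
Summing over the finitely many corners and recalling $\omega=\min_j\omega_j$ gives $|(n+1)\,b_{n+1}|=O(n^{-\omega})$, which is (\ref{eq:bn-decay-1}). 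Note that here the integrand carries no factor $\Phi^\prime$ (in contrast to the representations of $H_n$ and of $E_{n+1}^\prime$), and it is precisely this that improves the corner contribution from $O(1/n)$ to $O(n^{-\omega})$; no cancellation between the two arms $l_j^1,l_j^2$ is needed.

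\textbf{Sharpness.} For the lower bound I would take $\Gamma=\partial G$, where $G$ is bounded by two circular arcs, symmetric with respect to the imaginary axis, that meet at $i$ and $-i$ forming there exterior angles $\omega\pi$ with $\omega=1/N$, $N\in\mathbb{N}\setminus\{1\}$; this $\Gamma$ is piecewise analytic without cusps and $\omega\pi$ is its smallest exterior angle. The exterior map $\Psi:\Delta\to\Omega$ can be written in closed form: the M\"obius map $z\mapsto(z-i)/(z+i)$ straightens the two arcs into rays from the origin, the power map $\zeta\mapsto\zeta^{1/\omega}$ opens the resulting sector into a half-plane, and a further M\"obius map carries that onto $\Delta$; inverting, $\Psi$ continues analytically across $\{|w|=1\}$ away from the two prevertices $w_{\pm}$ of $\pm i$, at which it has algebraic branch points of exponent $\omega$, i.e.\ $\Psi(w)\mp i\sim C_{\pm}(w-w_{\pm})^{\omega}$ with $C_{\pm}\ne0$. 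Applying singularity analysis (the transfer theorem for the coefficients of $\Psi(1/v)$, or equivalently Darboux's method, or a direct binomial-and-Stirling estimate) gives
\[
b_n=\frac{1}{n^{\,1+\omega}}\bigl(D_{+}\,w_{+}^{-n}+D_{-}\,w_{-}^{-n}\bigr)+o\!\left(\frac{1}{n^{\,1+\omega}}\right),\qquad D_{\pm}\ne0.
\]
Since the configuration is symmetric in both coordinate axes, $b_n$ is real, $b_{2n}=0$, and the prevertices lie at $w_{\pm}=\pm i$; the bracket then reduces to $2\re\!\bigl(D_{+}\,i^{-n}\bigr)$ with $D_{+}\ne0$ purely imaginary, which is bounded away from $0$ for every odd $n$. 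Hence $|b_n|\ge c_2(\Gamma)\,n^{-(1+\omega)}$ along the progression $n=2m+1$, which both proves (\ref{eq:bn-decay-2}) and recovers the relation $|b_{2m+1}|\asymp m^{-(1+1/N)}$ announced after Remark~\ref{rem:alphan-low-esti}.

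\textbf{Main obstacle.} The upper bound is essentially routine once the contour has been moved to $\Gamma^\prime$. The substantive work is in the sharpness part: one must justify the analytic continuation of $\Psi$ to a keyhole domain around the unit circle, pin down the precise singular expansion at each prevertex (exponent exactly $\omega$, leading coefficient nonzero) so that the transfer theorem applies, and keep track of the unimodular factors $w_{\pm}^{-n}$ so as to guarantee that they do not conspire to cancel for all large $n$ — this last point is where the two symmetries of the domain, which place the prevertices at $\pm i$, are decisive.
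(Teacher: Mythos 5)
Your upper bound is essentially the paper's own argument: the same identity $-nb_n=\frac{1}{2\pi i}\int_{L_R}\Phi^n\,dz$ from (\ref{eq:c_1-2}), the same deformation to $\Gamma^\prime=l\cup\tau$, and the same pointwise bound $|\Phi(\zeta)|^n\le \textup{e}^{-cns^{1/\omega_j}}$ from Remark~\ref{rem:Lehman}(iii) giving $O(n^{-\omega_j})$ per corner; your observation that the absence of the factor $\Phi^\prime$ is what upgrades $O(1/n)$ to $O(n^{-\omega_j})$ is exactly the right point of contrast with (\ref{eq:Iji}). The sharpness half, however, takes a genuinely different route. The paper uses the canonical square $\Pi_4$ (more generally any canonical polygon $\Pi_m$, so $\omega=(m+2)/m$): the Schwarz--Christoffel integral collapses to $\text{cap}(\Pi_4)\int(1-w^{-4})^{\omega-1}dw$, the Laurent coefficients $b_{4k-1}$ are explicit binomial coefficients, and Stirling's formula for $\binom{1/2}{k}$ gives $|b_n|\asymp n^{-(1+\omega)}$ along $n=4k-1$ with no analytic-continuation or transfer-theorem machinery. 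You instead use the two-arc lens with $\omega=1/N$ --- precisely the domain the paper invokes in Remark~\ref{rem:alphan-low-esti} and defers to a forthcoming article --- and propose Darboux/singularity analysis at the two prevertices $\pm i$. That route is sound in principle and covers a complementary range of angles ($\omega\in(0,1/2]$ versus the paper's $\omega\in(1,5/3]$; either suffices for the ``certain $\omega$'' in the statement), but the decisive steps are only sketched: one must actually establish the singular expansion $\Psi(w)\mp i\sim C_\pm(w-w_\pm)^\omega$ with $C_\pm\ne 0$ in a continuation domain where the transfer theorem applies, and verify the phase of $D_+$ so that the two unimodular contributions do not cancel on the odd integers. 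In the paper's square example this nonvanishing is read off from an explicit series; in yours it is the main remaining computation, as you acknowledge. If you want a proof of the same length as the paper's, the explicit Schwarz--Christoffel series is the more economical choice; your lens construction buys the extra information $|b_{2m+1}|\asymp m^{-(1+1/N)}$ needed for Remark~\ref{rem:alphan-low-esti}, at the cost of completing the singularity analysis.
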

\begin{proof}
The estimate (\ref{eq:bn-decay-1}) can be established by means of the tools developed in Section~\ref{proofs-pa}.
More precisely, the following array of equations can be readily verified by using (\ref{eq:c_1-2}) and arguing
as in Section~\ref{sec:proof-thm:HnOmgae}:
\begin{align}\label{eq:nbn-esti}
-nb_n
&=\frac{1}{2\pi i}\,\int_{L_R}\Phi^{n}(z)dz=\frac{1}{2\pi i}\,\int_{\Gamma^\prime}\Phi^{n}(z)dz\nonumber\\
&=\frac{1}{2\pi i}\sum_j\int_{l_j^1\cup l_j^2}{\Phi^n(\zeta)}\,d\zeta
     +\frac{1}{2\pi i}\int_{\tau}{\Phi^n(\zeta)}\,d\zeta \nonumber\\
&=\frac{1}{2\pi i}\sum_j\int_{l_j^1\cup l_j^2}{\Phi^n(\zeta)}\,d\zeta
     +O(\rho^n),
\end{align}
for some $\rho:=\rho(\Gamma)<1$.

Hence, we only need to estimate the integral
$$
I_j^i:=\int_{l_j^i}{\Phi^n(\zeta)}\,d\zeta.
$$
This can be done by working as in deriving (\ref{eq:Iji}). Indeed, by using the estimate
\begin{equation*}
|\Phi(\zeta)|\le 1-cs^{1/\omega_j}<\exp(-cs^{1/\omega_j}),\quad \zeta\in l_j^i,
\end{equation*}
we obtain
\begin{equation*}
|I_j^i|\le c\int_0^\infty \textup{e}^{-cns^{1/\omega_j}}\,ds={c\,\omega_j}\Gamma(\omega_j)\,\frac{1}{n^{\omega_j}},
\end{equation*}
and the required result (\ref{eq:bn-decay-1}) follows from (\ref{eq:nbn-esti}), with $\omega:=\min_{j}\{\omega_j\}$.

An extremal domain, where (\ref{eq:bn-decay-2}) holds true, is provided by the case where $\Gamma$ is the canonical square
$\Pi_4$, with vertices at $1$, $i$, $-1$ and $-i$. In this case $\omega=3/2$, and by making use of the
rotational symmetry of $\Pi_4$ it is easily seen that the Schwarz-Christoffel formula for the normalized conformal map
$\Psi:\Delta\to\Omega$ takes the following expression:
\begin{align*}
\Psi(w)&=\text{cap}(\Pi_4)\int\left(1-\frac{1}{w}\right)^{\omega-1}\left(1-\frac{i}{w}\right)^{\omega-1}
\left(1+\frac{1}{w}\right)^{\omega-1}\left(1+\frac{i}{w}\right)^{\omega-1}dw\\
&=\text{cap}(\Pi_4)\int\left(1-\frac{1}{w^4}\right)^{\omega-1}dw,
\end{align*}
or, more explicitly,
$$
\Psi(w)=\text{cap}(\Pi_4)\left\{w+\sum_{k=1}^\infty(-1)^{k+1}\binom{a}{k}\frac{1}{4k-1}\frac{1}{w^{4k-1}}\right\},
$$
where $a:=\omega-1=1/2$, and $\binom{a}{k}$ denotes the binomial coefficient.
Hence, for $n=4k-l$, $k\in\mathbb{N}$ and $l\in\{0,1,2,3\}$, we have
\begin{equation}\label{eq:bn-square}
b_n= \left\{
\begin{array}{ll}
\text{cap}(\Pi_4)(-1)^{k+1}\binom{a}{k}\frac{1}{n}, &\text{if } l=1,\\
0, & \text{if } l\ne 1.
\end{array}
\right.
\end{equation}
Now, using the properties of the Gamma function $\Gamma(z)$, it is easy to verify that
$$
\binom{a}{k}=\frac{(-1)^k}{\Gamma(-a)}\frac{\Gamma(k-a)}{\Gamma(k+1)}
=\frac{(-1)^k}{\Gamma(-a)}\left\{\frac{1}{k^{1+a}}+O\left(\frac{1}{k^{2+a}}\right)\right\},
$$
and this, in conjunction with (\ref{eq:bn-square}), provides the required behaviour
\begin{equation}\label{eq:bn_equiv}
|b_n|\asymp\frac{1}{n^{1+\omega}},
\end{equation}
for $n=3,7,11,\ldots$.

Clearly, the above argument applies to any canonical polygon $\Pi_m$, with $m$-sides.
In particular, (\ref{eq:bn_equiv}) holds true for any $\Pi_m$, $m\ge 3$, with $\omega=(m+2)/m$ and
$n=km-1$, $k\in\mathbb{N}$. Thus, any $\Pi_m$ can serve as an extremal curve for the estimate
(\ref{eq:bn-decay-1}).
\end{proof}
We note that, since $\Psi(1)=1$, it is not difficult to obtain the following expression for the capacity of $\Pi_4$,
using the properties of hypergeometric functions:
\begin{equation}\label{eq:cap-square}
\text{cap}(\Pi_4)=\frac{\Gamma^2(1/4)}{4\pi^{3/2}}=0.834\,626\,841\,674\,072\cdots.
\end{equation}

\begin{remark}
In the case where $\Gamma$ is allowed to have cusps we recall, from Section 1,
the following estimate of Gaier \cite[\S 4.1]{Ga99}:
$$
|b_n|\le c(\Gamma)\frac{1}{n}, \quad n\in\mathbb{N}.
$$
This shows that the arguments on the first part of the proof of the theorem can be amended to cover the case
of zero exterior angles but not of angles of opening $2\pi$.
\end{remark}

\subsection{A connection with Operator Theory}\label{subsec:OT}
In a different reading, Theorem~\ref{th:bn-decay} brings in a connection with Operator Theory.
To testify this, we consider the Toeplitz  matrix $T_\Psi$ defined by the continuous extension of $\Psi(w)$ to the unit circle
$\mathbb{T}:=\{w:|w|=1\}$.  By this we mean the matrix
\begin{equation}\label{eq:Tmat}
{T_\Psi}:=\left[%
\begin{array}{cccccc}
   b_{0}& b_{1}& b_{2}& b_{3}& b_{4}&\cdots\\
   b& b_{0}& b_{1}& b_{2}& b_{3}&\cdots \\
    0    & b& b_{0}& b_{1}& b_{2}&\cdots \\
    0    &  0    & b& b_{0}& b_{1}&\cdots \\
    0    &  0    &   0   & b& b_{0}&\cdots \\
   \vdots& \vdots& \vdots& \ddots& \ddots&\ddots
\end{array}%
\right],
\end{equation}
defined by the coefficients of $\Psi(w)$ in its Laurent series expansion (\ref{eq:Psi}).
If the boundary $\Gamma$ is piecewise analytic without cusps, then Theorem~\ref{th:bn-decay} implies that
$\sum_{n=0}^\infty|b_n|<\infty$, and hence that the symbol
$\Psi$ of the Toeplitz matrix $T_\Psi$ belongs to the Wiener algebra; see, e.g., \cite[\S 1.2--1.5]{Bo-Gr05}.
This property leads to very interesting conclusions. For instance, to the conclusion that
$T_\Psi$ defines a bounded linear operator on the Hilbert space $l^2$ and that
\begin{equation}\label{eq:essspT}
\sigma_{ess}(T_\Psi)=\Gamma,
\end{equation}
where we use $\sigma_{ess}(L)$ to denote the  \textit{essential spectrum} of a bounded
linear operator $L$.

Consider  next the multiplication by $z$ operator $\mathcal{M}:f\to zf$ (also known as the
\textit{Bergman shift operator}), defined on the Hilbert space $L_a^2(G)$.
We note that $\mathcal{M}$ is a bounded linear operator on $L_a^2(G)$, such that
\begin{equation*}
\sigma_{ess}(\mathcal{M})=\Gamma;
\end{equation*}
see \cite{AxCoMcDo}. Hence, from (\ref{eq:essspT}) it follows that
\begin{equation}\label{eq:esssp=}
\sigma_{ess}(\mathcal{M})=\sigma_{ess}(T_\Psi).
\end{equation}
In a forthcoming paper \cite{SaSt2012}, we employ  results and tools from the present work to show that the
connection between the two operators $\mathcal{M}$ and $T_\Psi$ is much more substantial.

In order to emphasize the importance of the Bergman shift operator $\mathcal{M}$ in the theory of orthogonal polynomials,
we note that the proof of Theorem~\ref{thm:ftrr} relies heavily on the properties of $\mathcal{M}$; see \cite{PuSt} and
\cite{KhSt}.
Furthermore, we note that the stable Arnoldi GS process is based on the use of the polynomial
$zp_{n-1}$, i.e., on the application of $\mathcal{M}$ to $p_{n-1}$.

\subsection{The decay of the Bergman polynomials in $G$}
Here we refine the following estimate, which was derived in \cite[p.~530]{MSS} under the assumption that $\Gamma$ is piecewise analytic without cusps: For any compact subset $B$ of $G$ and for any $n\in\mathbb{N}$, it holds that
\begin{equation}\label{eq:pndecay-MSS}
|p_n(z)|\le\ c_1(\Gamma,B)\,\frac{1}{n^{s}},\quad z\in B,
\end{equation}
where
\begin{equation}\label{eq:pndecay-MSSs}
s:=\min_{1\le j\le N}\{\omega_j/(2-\omega_j)\}.
\end{equation}
(We use $c_j(\Gamma,B)$ to denote positive constants that depend only on $\Gamma$ and $B$.)
Note that $s\to 0$, if $\omega_j\to 0$, for some $j$, and hence for such cases, (\ref{eq:pndecay-MSS}) predicts a very slow decay for $p_n(z)$. The next theorem, however, shows that this decay cannot be slower than $O(1/\sqrt{n})$.
\begin{theorem}\label{lem:pnG}
Assume that $\Gamma$ is piecewise analytic without cusps. Then, for any $n\in\mathbb{N}$, it holds that
\begin{equation}\label{eq:pnG}
|p_n(z)|\le{c_2(\Gamma,B)}\,\frac{1}{n^\sigma},\quad z\in B,
\end{equation}
where $\sigma:=\max\{1/2,s\}$.
\end{theorem}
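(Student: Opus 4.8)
The plan is to prove the complementary bound
\[
|p_n(z)|\le c(\Gamma,B)\,\frac{1}{\sqrt{n}},\quad z\in B,
\]
and then to deduce (\ref{eq:pnG}) by taking, for each $n$, the smaller of this estimate and the bound $|p_n(z)|\le c_1(\Gamma,B)n^{-s}$ of \cite{MSS} recalled in (\ref{eq:pndecay-MSS})--(\ref{eq:pndecay-MSSs}); indeed $\min\{n^{-1/2},n^{-s}\}=n^{-\max\{1/2,s\}}=n^{-\sigma}$.

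For the complementary bound I would start from the polynomial identity $p_n=(\lambda_n/\gamma^{n+1})(G_n-q_{n-1})$, which is just the definition (\ref{eq:qndef}) of $q_{n-1}$, together with the growth estimate $\lambda_n/\gamma^{n+1}\le c(\Gamma)\sqrt{n}$ supplied by Theorem~\ref{thm:finelambdan} (cf.\ (\ref{eq:lam/gam})--(\ref{eq:lam/gam-2}); the crude bound $0\le\alpha_n<1$ from Remark~\ref{rem:beta-eps} already covers the finitely many small $n$). It therefore remains to bound $|G_n(z)|$ and $|q_{n-1}(z)|$ by $c(\Gamma,B)/n$ on $B$. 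For $q_{n-1}$ this is immediate: $|q_{n-1}|^2$ is subharmonic in $G$, and for every $z\in B$ the disc $D(z,d)$ with $d:=\dist(B,\Gamma)>0$ lies in $G$, so the sub-mean-value inequality and Corollary~\ref{cor:qn-1L2} give $|q_{n-1}(z)|^2\le(\pi d^2)^{-1}\|q_{n-1}\|_{L^2(G)}^2\le c(\Gamma,B)/n^2$.

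The estimate for $G_n$ is where the assumption on $\Gamma$ enters, and it parallels the proof of Theorem~\ref{thm:HnOmgae}. Extend $\Phi$ by Schwarz reflection across each analytic subarc of $\Gamma$, obtaining a piecewise analytic curve $\Gamma'\subset\overline{G}$ that shares with $\Gamma$ only the corners $z_j$ and is chosen close enough to $\Gamma$ that $B$ lies in the bounded component of $\mathbb{C}\setminus\Gamma'$ --- this is possible since the reflection extends $\Phi$ analytically to a full two-sided neighbourhood of each arc, apart from the corners. Since $H_n=G_n-\Phi^n\Phi'$ is then analytic in the exterior of $\Gamma'$ with a double zero at infinity, the contour in (\ref{eq:GnIntRep}) may be moved from $\Gamma$ to $\Gamma'$, giving
\[
G_n(z)=\frac{1}{2\pi i}\int_{\Gamma'}\frac{\Phi^n(\zeta)\Phi'(\zeta)}{\zeta-z}\,d\zeta
\]
for $z$ interior to $\Gamma'$, in particular for $z\in B$. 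Splitting $\Gamma'=l\cup\tau$ as in Section~\ref{proofs-pa}, the contribution of $\tau$ is $O(\rho^n)$ with $\rho=\rho(\Gamma)<1$, while on each segment $l_j^i$ the inequalities of Remark~\ref{rem:Lehman}(i)--(ii), combined with $|\zeta-z|\ge\dist(B,\Gamma')>0$, reproduce verbatim the computation that produced (\ref{eq:Iji}) and yield a bound $c(\Gamma,B)/n$. Hence $|G_n(z)|\le c(\Gamma,B)/n$ on $B$, and combining the three estimates gives $|p_n(z)|\le c\sqrt{n}\,(|G_n(z)|+|q_{n-1}(z)|)\le c(\Gamma,B)/\sqrt{n}$ on $B$.

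The argument is essentially an assembly of ingredients already in hand --- the $L^2$-decay of $q_{n-1}$, the near-corner estimates of Remark~\ref{rem:Lehman}, and the growth of $\lambda_n/\gamma^{n+1}$ --- so there is no deep new difficulty. The one point that requires care is the legitimacy of deforming the contour onto $\Gamma'$ uniformly for all $z\in B$, i.e.\ arranging $\Gamma'$ so that the whole of $B$ stays in its interior; this is the reason for taking $\Gamma'$ sufficiently near $\Gamma$. The genuinely new observation is simply that the Faber-type Cauchy integral for $G_n$ over $\Gamma'$ obeys the same $O(1/n)$ bound inside $G$ as the one obtained for $H_n$ in $\Omega$, and this is what forces $|p_n|$ down to order $n^{-1/2}$ regardless of how small $s$ is.
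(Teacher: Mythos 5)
Your proposal is correct and follows essentially the same route as the paper: write $p_n=(\lambda_n/\gamma^{n+1})(G_n-q_{n-1})$, bound $q_{n-1}$ on $B$ by $O(1/n)$ via Corollary~\ref{cor:qn-1L2} together with a pointwise Bergman-space evaluation estimate, bound $G_n$ on $B$ via the contour deformation to $\Gamma'$ and the Lehman-type inequalities, multiply by $\lambda_n/\gamma^{n+1}=O(\sqrt{n})$, and take the minimum with the bound of \cite{MSS}. The only cosmetic difference is that the paper estimates $G_n=F_{n+1}'/(n+1)$ by applying Cauchy's formula for the derivative to (\ref{eq:FnIntRep}), obtaining the sharper $|G_n(z)|\le c(\Gamma,B)/n^{1+\omega}$ on $B$, whereas your direct estimate of the Cauchy integral (\ref{eq:GnIntRep}) over $\Gamma'$ gives only $O(1/n)$ --- which is all that is needed, since the $q_{n-1}$ term dominates in either case.
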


\begin{proof}
By using Cauchy's formula for the derivative in (\ref{eq:FnIntRep}) and by working as in the proof of
Theorem~\ref{th:bn-decay}, it is readily seen that
\begin{equation}\label{eq:GaierFn}
|F_{n+1}^\prime(z)|\le{c_3(\Gamma,B)}\,\frac{1}{n^\omega},\quad z\in B,
\end{equation}
where $\omega\pi$ ($0<\omega<2$) is the smallest exterior angle of $\Gamma$. This, in view of (\ref{eq:GnFn+1}), gives immediately
\begin{equation}\label{eq:Gnz}
|G_n(z)|\le {c_4(\Gamma,B)}\,\frac{1}{n^{1+\omega}},\quad z\in B.
\end{equation}

Next, since
$$
|q_{n-1}(z)|\le\frac{\|q_{n-1}\|_{L^2(G)}}{\sqrt{\pi}\,{\dist(z,\Gamma)}},\quad z\in G;
$$
see, e.g., \cite[p.\ 4]{Gabook87}, we obtain from Corollary~\ref{cor:qn-1L2} the estimate
\begin{equation}\label{eq:qnz}
|q_{n-1}(z)|\le {c_5(\Gamma,B)}\,\frac{1}{n},\quad z\in B.
\end{equation}

Finally, from (\ref{eq:qndef}), we have that
\begin{equation*}
|p_n(z)|\le\frac{\lambda_n}{\gamma^{n+1}}\left\{|G_n(z)|+|q_{n-1}(z)|\right\},
\end{equation*}
and this in view of (\ref{eq:lam/gam})--(\ref{eq:lam/gam-2}) and (\ref{eq:Gnz})--(\ref{eq:qnz}) yields
\begin{equation}\label{eq:pn-order-half}
|p_n(z)|\le c_6(\Gamma,B)\,\frac{1}{n^{1/2}},\quad z\in B.
\end{equation}
The result of the theorem follows by combining (\ref{eq:pndecay-MSS}) with (\ref{eq:pn-order-half}).
\end{proof}

\begin{remark}
Regarding sharpness of the exponent $\sigma$ of $n$ in (\ref{eq:pnG}), we recall the following result of
\cite[p.\ 531]{MSS}: \lq\lq If not all interior angles of $\Gamma$ are of the  form $\pi/m$, $m\in\mathbb{N}$, and if we
disregard in the definition of $s$ in (\ref{eq:pndecay-MSSs}) angles of this form, should there exists, then for any
$\varepsilon >0$, there is a
subsequence $\mathcal{N}_\varepsilon\subset\mathbb{N}$, such that for any $n\in\mathcal{N}_\varepsilon$:\rq\rq
$$
|p_n(z)|\ge c_7(\Gamma,B)\,\frac{1}{n^{s+1/2+\varepsilon}},\quad z\in B.
$$
\end{remark}

\bibliographystyle{amsplain}

\def\cprime{$'$}
\providecommand{\bysame}{\leavevmode\hbox to3em{\hrulefill}\thinspace}
\providecommand{\MR}{\relax\ifhmode\unskip\space\fi MR }
\providecommand{\MRhref}[2]{%
  \href{http://www.ams.org/mathscinet-getitem?mr=#1}{#2}
}
\providecommand{\href}[2]{#2}

\end{document}